\documentclass[a4paper,11pt,twoside,reqno]{amsart}

\usepackage{graphicx}
\usepackage{subfig}
\usepackage[utf8]{inputenc}
\usepackage[plainpages=false,pdfpagelabels=true]{hyperref}
\usepackage{amssymb,amsthm}
\usepackage[margin=1in]{geometry}
\usepackage{slashed}
\usepackage{enumitem}
\usepackage{empheq}
\usepackage{tensor}
\usepackage{mathrsfs}
\usepackage{amsrefs}
\usepackage{todonotes} 

\setenumerate{font=\upshape,label=(\roman*),itemsep=0.1cm}

\renewcommand{\epsilon}{\varepsilon}

\def\restr#1#2{{
  \left.\kern-\nulldelimiterspace 
  #1 
  \vphantom{\big|} 
  \right|_{#2} 
  }}

\newcommand{\Z}{\mathbb{Z}}
\newcommand{\R}{\mathbb{R}}
\newcommand{\C}{\mathbb{C}}
\newcommand{\Q}{\mathbb{Q}}
\newcommand{\Quat}{\mathbb{H}}
\newcommand{\Sph}{\mathbb{S}}

\newcommand{\GL}{\mathbf{GL}}
\newcommand{\SL}{\mathbf{SL}}
\newcommand{\U}{\mathbf{U}}
\newcommand{\SU}{\mathbf{SU}}
\newcommand{\SO}{\mathbf{SO}}
\newcommand{\Spin}{\mathbf{Spin}}

\newcommand{\g}{\mathfrak{g}}
\newcommand{\h}{\mathfrak{h}}
\renewcommand{\k}{\mathfrak{k}}
\renewcommand{\u}{\mathfrak{u}}
\newcommand{\su}{\mathfrak{su}}
\newcommand{\spin}{\mathfrak{spin}}
\newcommand{\so}{\mathfrak{so}}
\renewcommand{\sl}{\mathfrak{sl}}
\newcommand{\e}{\mathfrak{e}}

\renewcommand{\v}{\mathfrak{v}}
\newcommand{\w}{\mathfrak{w}}
\newcommand{\crit}{\mathrm{crit}}

\DeclareMathOperator{\tr}{\mathrm{tr}}
\DeclareMathOperator{\artanh}{artanh}
\DeclareMathOperator{\arsinh}{arsinh}
\DeclareMathOperator{\arcosh}{arcosh}
\DeclareMathOperator{\arcoth}{arcoth}
\DeclareMathOperator{\sech}{sech}
\DeclareMathOperator{\csch}{csch}
\newcommand{\dirac}{\slashed{\mathrm{D}}}

\newcommand{\Ad}{\mathrm{Ad}}
\newcommand{\ad}{\mathrm{ad}}
\newcommand{\diff}{\mathrm{d}}
\newcommand{\QuatRe}{\mathfrak{C}}
\newcommand{\QuatIm}{\mathfrak{H}}

\usepackage[dvipsnames]{xcolor}

\newtheorem{theorem}{Theorem}[section]
\newtheorem{proposition}[theorem]{Proposition}
\newtheorem{lemma}[theorem]{Lemma}
\newtheorem{corollary}[theorem]{Corollary}
\theoremstyle{definition}

\newtheorem{remark}[theorem]{Remark}

\numberwithin{equation}{section}

\makeatletter
\newsavebox\myboxA
\newsavebox\myboxB
\newlength\mylenA
\newcommand*\xoverline[2][0.75]{%
    \sbox{\myboxA}{$\m@th#2$}%
    \setbox\myboxB\null
    \ht\myboxB=\ht\myboxA%
    \dp\myboxB=\dp\myboxA%
    \wd\myboxB=#1\wd\myboxA
    \sbox\myboxB{$\m@th\overline{\copy\myboxB}$}
    \setlength\mylenA{\the\wd\myboxA}
    \addtolength\mylenA{-\the\wd\myboxB}%
    \ifdim\wd\myboxB<\wd\myboxA%
       \rlap{\hskip 0.5\mylenA\usebox\myboxB}{\usebox\myboxA}%
    \else
        \hskip -0.5\mylenA\rlap{\usebox\myboxA}{\hskip 0.5\mylenA\usebox\myboxB}%
    \fi}
\makeatother
\newcommand{\conj}[1]{\xoverline{#1}}

\usepackage{pgfplots}
\usepackage{pgfplotstable}
\pgfplotsset{compat = 1.18}

\usetikzlibrary{decorations.markings}
 
\tikzset{decorated arrows/.style={
    postaction={
        decorate,
        decoration={
            markings,
            mark=between positions 0 and 1 step 15mm with {\arrow[black]{stealth};}
            }
        },
    }
}
\pgfmathdeclarefunction{artanh}{1}{%
  \pgfmathparse{1/2*ln((1+#1)/(1-#1))}%
}

\title{Spherically symmetric Dirac-Yang-Mills pairs on Riemannian manifolds}
\date{\today}

\author{Adam Lindstr\"om}
\address{University of Vienna, Faculty of Mathematics\\
Oskar-Morgenstern-Platz 1, 1090 Vienna, Austria\\}
\email{adam.lindstroem@univie.ac.at}

\author{Marko Sobak}
\address{University of Vienna, Faculty of Mathematics\\
Oskar-Morgenstern-Platz 1, 1090 Vienna, Austria\\}
\email{marko.sobak@univie.ac.at}

\keywords{Dirac-Yang-Mills, gauge theory, spherically symmetric, three-dimensional Riemannian manifolds}

\thanks{The authors gratefully acknowledge the support of the Austrian Science Fund (FWF) through the project "The Standard Model as a Geometric Variational Problem" (DOI: 10.55776/P36862).
}

\begin{document}
\begin{abstract}
    In this paper we construct examples of spherically symmetric Dirac-Yang-Mills pairs on Riemannian 3-manifolds with the structure group $\SU(2)$. This approach yields coupled solutions (i.e.\ the connection is not a Yang-Mills connection) and among them are solutions on $\Sph^1(r_1)\times \Sph^2(r_2)$ for certain radii $r_1$ and $r_2$. We further show how to use such pairs to induce Dirac-Yang-Mills pairs on Riemannian products of arbitrary dimension. These are, to the authors' best knowledge, the first examples of coupled Dirac-Yang-Mills pairs on a closed Riemannian spin manifold.
\end{abstract}

\maketitle

\section{Introduction}

Let $(M,g)$ be a Riemannian spin manifold, $G$ a compact Lie group with Lie algebra $\g$, and $G \to P \to M$ a principal fiber bundle.
We study the \emph{Dirac-Yang-Mills Lagrangian density}
\begin{equation}\label{eq-DYM density}
    (\omega, \Psi) \mapsto \frac12 |F_\omega|^2 + \Re \langle \Psi, \dirac_\omega\Psi\rangle,
\end{equation}
where
\begin{itemize}[itemsep=0.1cm]
    \item $\omega \in \Omega^1(P,\g)$ is a connection on $P$ with curvature form $F_\omega \in \Omega^2(M, \Ad(P))$,
    \item $\Psi \in \Gamma(\Sigma M \otimes E)$ is a spinor field twisted by an associated bundle $E = P \times_\chi V$ for some representation $\chi : G \to \GL(V)$, and $\dirac_\omega\Psi = e^j \cdot (\nabla_\omega\Psi)(e_j)$ denotes the twisted Dirac operator for the connection $\nabla_\omega$ on $\Sigma M \otimes E$ induced by the spin Levi-Civita connection and the connection on $E$ associated to $\omega$. Here, Clifford multiplication acts only on the $\Sigma M$-factor.
\end{itemize}
The associated Euler-Lagrange equations are given by 
\begin{equation}\label{eq-dym}
    \begin{cases}
        \diff_\omega^\ast F_\omega = \mathfrak{J}[\Psi], \\[0.1cm]
        \dirac_\omega \Psi = 0,
    \end{cases}
\end{equation}
where the current is explicitly given by
\begin{equation}\label{eq-dym-current}
    \mathfrak{J}[\Psi] = \langle e_j \cdot \Psi, \,\chi_*(\tau_a)\Psi \rangle\, e^j \otimes \tau_a,
\end{equation}
for a local frame $\tau_a$ of $\Ad(P)$, where 
$\Ad(P)$ acts only on the $E$-factor of $\Psi$. 
We will refer to \eqref{eq-dym} as the \emph{Dirac-Yang-Mills equations}, and its solutions as \emph{Dirac-Yang-Mills pairs}.
For brevity, we henceforth abbreviate Dirac-Yang-Mills by DYM in the text.

The first term in the DYM Lagrangian density \eqref{eq-DYM density} is the Lagrangian density of the pure Yang-Mills theory which on its own is of great interest from a differential geometry perspective. In particular the moduli space of (anti-)self-dual solutions has played a key role in understanding the topology of 4-manifolds thanks to the seminal work of Donaldson \cite{Donaldson} (see the book \cite{DoKr} for a detailed exposition). The celebrated Atiyah-Singer index theorem \cite{AtSi} as well as results such as the alternative proof of completeness of the ADHM construction \cite{ADHM} found in \cite{GodCor} show a connection between Yang-Mills theory and the kernel of the associated twisted Dirac operators.

In quantum field theory, twisted spinors are employed to model fermions, while connections on principal bundles are used to model bosons. The density \eqref{eq-DYM density} appears as part of the full Lagrangian density of the standard model of particle physics and describes the interaction betwen a massless fermionic matter field and a (massless) bosonic field given by the connection. For details concerning the standard model or gauge theory in general we encourage the reader to consult \cite{Hamilton}.

The model resulting from the Lagrangian density \eqref{eq-DYM density} is therefore a natural object of study from both a mathematics and a physics perspective.
The study of the DYM equations on Riemannian manifolds was initiated by Parker in \cite{Parker}, and continued by Otway in \cite{Otway}, Li in \cite{Li} and Isobe in \cite{Isobe}. A similar system was also studied by Jost, Ke{\ss}ler, Wu and Zhu in \cite{Jost}. The focus in the articles \cites{Isobe,Li,Otway,Parker} has been mostly on regularilty theory and on extending removability of singularities results from Yang-Mills theory to DYM.

Note that if $\omega$ is a Yang-Mills connection (i.e.\ satisfies the pure Yang-Mills equation $\diff_\omega^*F_\omega = 0$), then a DYM pair $(\omega, \Psi)$ necessarily has vanishing current, i.e.\ $\mathfrak{J}[\Psi] \equiv 0$.
DYM pairs with vanishing current are said to be \emph{uncoupled}, and otherwise we say that they are coupled.
In \cite{A} the first named author uses index theory to establish existence results for such uncoupled solutions when the base manifold satisfies $\dim M \equiv 0 \ (\mathrm{mod} \ 4)$. On the other hand it is shown, for closed base manifolds of any dimension, that the set of connection forms $\omega$ for which $\mathfrak{J}[\Psi] = 0$ for all $\Psi \in \ker (\dirac_\omega)$ contains a dense open subset of the affine space of smooth connection forms (with respect to the $C^\infty$-topology).
This suggests that global coupled DYM pairs on closed Riemannian manifolds are scarce, and therefore it is of interest to construct examples in more specific settings.
To this end, in this work we construct spherically symmetric solutions of the DYM equations with structure group $\SU(2)$ on spherically symmetric three-dimensional Riemannian manifolds.

\begin{theorem}\label{thm-existence-3-dim}
    Let $N = \R$ or $N = \Sph^1$.
    Then in both cases, there exist families of spherically symmetric Riemannian metrics $g$ on $N\times \Sph^2$ admitting globally defined coupled spherically symmetric Dirac-Yang-Mills pairs with structure group $\SU(2)$.
\end{theorem}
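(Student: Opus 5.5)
Take the metric $g = \diff t^2 + f(t)^2 g_{\Sph^2}$ on $N\times\Sph^2$ with $t\in\R$, or $t$ periodic when $N=\Sph^1$. The idea is to reduce the Dirac-Yang-Mills system \eqref{eq-dym} to ODEs in $t$ by spherical symmetry, and then choose $f$ so that the ODEs have explicit global solutions.

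\textbf{Step 1: symmetric ansatz.} For the bundle, use the trivial $\SU(2)$-bundle with the $\SU(2)$-action on $\Sph^2$ lifted through the standard isotropy homomorphism (the Witten/'t Hooft ansatz). In temporal gauge the invariant connections are
\[
\omega = \tfrac12\bigl(a(t)\,\tau\text{-terms}\bigr),
\]
described by a single real profile function $a(t)$ (or a complex one, reducible to real by residual gauge), like the Yang-Mills monopole $W(r)$. Take the twisted spinors in $\Sigma M\otimes \C^2$ with the fundamental representation. Spherical symmetry then forces $\Psi$, in the lowest ``$j=0$'' sector, to be determined by two complex functions $u(t),v(t)$. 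The coupling of $\Sigma\Sph^2$ with $\C^2$ is what permits invariant spinors at all.

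\textbf{Step 2: reduced equations.} Compute $\dirac_\omega\Psi$ and obtain a first-order linear system
\[
u' + \tfrac{f'}{f}u \pm \tfrac{a}{f}\,v=0
\]
and its partner. Compute $\diff^*_\omega F_\omega$, which gives
\[
a'' + \tfrac{?}{}\ldots - \tfrac{a(a^2-1)}{f^2}.
\]
Compute the current $\mathfrak J[\Psi]$, which is a multiple of $\Re(u\bar v)$ or $|u|^2-|v|^2$ in the angular direction. Also check that the $\diff t$-component of the current vanishes, or is consistent with the gauge constraint, for the ansatz. This yields a closed ODE system for $(a,u,v)$ with $f$ as a free function.

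\textbf{Step 3: choose $f$ to fit the solution.} Fix a simple choice of $a$ first, for instance $a\equiv$ const, or $a$ proportional to an explicit function. Then solve the Dirac system explicitly: for constant $a$, $u,v\sim f^{-1}\exp(\pm\int a/f)$. Substitute into the Yang-Mills equation. This becomes an algebraic or ODE condition relating $f$ and the spinor normalization. Then treat $f$ as unknown and solve for it, which produces families of metrics indexed by constants. Coupledness holds as soon as $a(a^2-1)\neq 0$, because the current is then nonzero.

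For $N=\Sph^1$, take $f\equiv r_2$ constant, i.e.\ a product metric. The Dirac equation with constant coefficients then has solutions $u,v\sim e^{ikt}$, which are periodic when $r_1$ and $r_2$ are chosen in a suitable ratio. The Yang-Mills equation reduces to an algebraic relation $a(a^2-1)/r_2^2 = c\,|u|^2$, and it can be satisfied by scaling $\Psi$. This scaling freedom is what makes the families. For $N=\R$, either the same product solution works, or one allows a nonconstant $f$ and simply needs global existence.

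\textbf{Main obstacle.} The main obstacle is Step 2: correctly identifying the invariant spinors and computing the twisted Dirac operator and the current in this ansatz, including signs and spin-structure periodicity on $\Sph^1$. The second concern is ensuring that the $\Sph^2$-Dirac eigenvalue shifts are compatible with a nonzero current that lies exactly in the direction of $\diff^*_\omega F_\omega$. To handle this, I would first verify the computation by working in the explicit gauge on $\Sph^2$ minus the poles and checking smooth extension across the poles.
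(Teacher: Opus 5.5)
\textbf{The connection ansatz forces $\Psi\equiv0$.} In temporal gauge the invariant connections are described by a \emph{complex} profile $z=w+iv$. The only residual gauge freedom is a constant rotation $z\mapsto e^{iT}z$, so $z$ can be made real only if its phase is constant. The $\diff t\otimes\tau_3$-component of $\diff^*_\omega F_\omega=\mathfrak J[\Psi]$, which is exactly the check you defer in Step~2, rules this out.

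On the invariant subspace one has $(\chi_n)_*(\tau_3)=-(\chi_1)_*(\tau_3)$, while $e_t$ acts by $\gamma_3=2(\chi_1)_*(\tau_3)$. Hence for \emph{every} invariant twisted spinor
\[
\langle e_t\cdot\Psi,\chi_*(\tau_3)\Psi\rangle=-2\,|(\chi_1)_*(\tau_3)\Psi|^2=-\tfrac12|\Psi|^2 .
\]
This never vanishes for $\Psi\neq0$; note it involves the sum $|u|^2+|v|^2$, not a difference. The radial component of $\diff^*_\omega F_\omega$ is proportional to $v'w-vw'=\Im(\conj{z}z')$. So this component of the Yang--Mills equation is the constraint $\Im(\conj{z}z')=-\tfrac14 f^2|\Psi|^2$, i.e.\ \eqref{eq-constraint-vw} with $\alpha=1$, $r=f$. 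For a real profile the left side vanishes, so $\Psi\equiv0$. This is the paper's remark that, unlike pure Yang--Mills, both $v$ and $w$ must be nonzero. Equivalently, gauging $z$ real by a $t$-dependent rotation creates a term $a_t(t)\,\diff t\otimes\tau_3$, which your one-function ansatz discards.

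\textbf{Step~3 fails independently.} For a real constant profile and $f\equiv r_2$, your own formula gives Dirac solutions that are real exponentials $e^{\pm\lambda a t/r_2}$, not $e^{ikt}$. So there are no nonzero periodic solutions unless $a=0$, and rescaling $\Psi$ cannot satisfy a constraint that reads $|\Psi|^2=0$.

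\textbf{The missing idea} is to let $\arg z$ rotate, with the constraint fixing its speed. The paper writes $z=\rho e^{iZ}$ and reduces to polar variables (Lemma~\ref{lem-polar-form}). It then imposes $|z|\equiv\rho_0$, so that $r$ is determined algebraically by \eqref{eq-constant-rho-cosW-expr} and the system becomes autonomous.
\begin{itemize}
\item For $\rho_0=\rho_\crit=(1+8\lambda^2)^{-1/2}$ and $W_0=\pi$ this gives $z=\rho_\crit e^{-2i\vartheta}$, $\xi=\xi_0e^{-i\vartheta}$ with $\vartheta=\lambda\rho_\crit s$ and constant $r$. That is a coupled pair on $\Sph^1(8\rho_\crit^2/|\xi_0|^2)\times\Sph^2(8\lambda\rho_\crit^3/|\xi_0|^2)$. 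The rotation of $\arg z$ is exactly what balances the radial current and produces periodicity.
\item For $\rho_0=1$ it gives a global pair on $\R\times\Sph^2$ with metric $r^2(\diff s^2+g_{\Sph^2})$ and $r=\frac{16\lambda}{|\xi_0|^2}\sech^{3/2}(4\lambda s)$.
\end{itemize}
Your idea of letting the metric be determined by the solution is right; it is the connection ansatz that has to be enlarged.
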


These solutions can be lifted to yield coupled DYM pairs on spherically symmetric manifolds of any dimension.

\begin{theorem}\label{thm-existence-any-dim}
    Let $(M,g)$ be a $3$-manifold such that there exists a globally defined coupled DYM pair $(\omega, \Psi)$ on $M$ with structure group $\SU(2)$. Then for any $n \in \mathbb{Z}_{>0}$, there exists an $n$-dimensional Riemannian spin manifold $B$ such that $(\omega,\Psi)$ lifts to a family of coupled DYM pairs  $(\widetilde{\omega},\widetilde{\Psi}_c)$ on $M \times B$, parametrized by $c \in \C^k \setminus \{0\}$ for some $k \geq 1$ depending on $B$. Moreover, we can always take $B$ to be closed.
\end{theorem}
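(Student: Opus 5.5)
Take $B$ to be an $n$-dimensional closed Riemannian spin manifold admitting nonzero harmonic spinors, so that $\ker \dirac_B \neq 0$, say of complex dimension $k \ge 1$. Pull back the bundle and connection along the projection $\pi: M\times B \to M$, setting $\widetilde{\omega} = \pi^*\omega$, and write $\widetilde{\Psi}_c = \Psi \otimes \phi_c$. Here $\phi_c = \sum_i c_i \phi_i \in \ker\dirac_B$ is built from a basis $\phi_1,\dots,\phi_k$ of harmonic spinors, with $c\in\C^k\setminus\{0\}$. For closed $B$ suitable choices are: a torus $T^n$ with the trivial spin structure, where parallel spinors give $k = 2^{\lfloor n/2\rfloor}$; $B = \Sph^1$ with the trivial (non-bounding) spin structure when $n=1$; or any Ricci-flat manifold with parallel spinors. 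This settles the final assertion of the theorem. The $E$-factor comes entirely from $M$.

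The spinor bundle of a product is $\Sigma(M\times B)\cong \Sigma M\hat\otimes \Sigma B$, possibly up to doubling in the odd$\times$odd case. Since $\dim M = 3$ is odd, I will use the standard model
\begin{equation*}
\dirac_{M\times B} = \dirac_M\otimes \mathrm{id} + \gamma\otimes \dirac_B ,
\end{equation*}
where $\gamma$ is a grading or the factor needed to make the Clifford relations anticommute. If $n$ is odd, take $\Sigma = \Sigma M\otimes\Sigma B\otimes\C^2$ with Pauli matrices. Then $\dirac_{\widetilde\omega}(\Psi\otimes\phi_c) = (\dirac_\omega\Psi)\otimes\phi_c \pm \Psi\otimes\dirac_B\phi_c = 0$, because $\widetilde{\omega}$ has no $B$-components and the Levi-Civita connection of a product splits.

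For the Yang-Mills equation, $F_{\widetilde\omega} = \pi^*F_\omega$ and $\diff^*_{\widetilde\omega}F_{\widetilde\omega} = \pi^*\diff^*_\omega F_\omega$, since the product metric has no $B$-dependence. Next I compute the current $\mathfrak J[\widetilde\Psi_c]$. For $e_j$ tangent to $M$, the Clifford action is $e_j\otimes\mathrm{id}$ (times the appropriate Pauli factor), so the $M$-components are $\langle e_j\cdot\Psi,\chi_*(\tau_a)\Psi\rangle\,|\phi_c|^2$. For $f_\alpha$ tangent to $B$, the components are $\langle \gamma\Psi,\chi_*(\tau_a)\Psi\rangle\langle f_\alpha\cdot\phi_c,\phi_c\rangle$ up to real parts.

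The main obstacle is ensuring two things: that the $B$-components vanish, and that $|\phi_c|^2$ is constant. The first I will check by showing $\langle f_\alpha\cdot\phi,\phi\rangle$ is purely imaginary because Clifford multiplication is skew-Hermitian, while the relevant $M$-factor is real, or the reverse, so the real part vanishes; if necessary I will choose the auxiliary $\C^2$ factor so that the mixed term is killed. For constancy, parallel spinors have constant norm, which is why I take $B$ flat and $\phi_c$ parallel. Normalising the metric on $B$ (or rescaling), the equation $\diff^*F = |\phi_c|^2\mathfrak J_M[\Psi]$ is restored by replacing $\Psi$ with $\Psi/|\phi_c|$, since $\ker\dirac_\omega$ is linear. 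Coupling persists because $\mathfrak J_M[\Psi]\neq 0$.
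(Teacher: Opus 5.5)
Your strategy of pulling back $\omega$ and tensoring $\Psi$ with a parallel spinor on a flat closed $B$ is the right one, but the even-$n$ case has a genuine gap.

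\textbf{Where the grading sits.} When $n$ is even, the grading cannot sit on $\Sigma M$: nothing anticommutes with all three Pauli matrices, so $\Sigma M$ carries no grading compatible with Clifford multiplication. The correct model is $\Sigma M\otimes\Sigma B$, with $Y\in TM$ acting as $Y\cdot\otimes\,\omega_B$ ($\omega_B$ the chirality operator of $B$) and $X\in TB$ acting as $\mathrm{id}\otimes X\cdot$. Hence the current of $\Psi\otimes\phi$ has two kinds of components.
\begin{itemize}
\item The $TM$-components are $(|\phi^+|^2-|\phi^-|^2)\,\mathrm{pr}_2^*\mathfrak{J}[\Psi]$, not $|\phi|^2\,\mathrm{pr}_2^*\mathfrak{J}[\Psi]$.
\item The $TB$-components are $\langle\Psi,\chi_*(\tau_a)\Psi\rangle\,\langle f_\alpha\cdot\phi,\phi\rangle$.
\end{itemize}

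\textbf{The vanishing argument fails.} Both factors in the $TB$-components are purely imaginary, since $\chi_*(\tau_a)$ and Clifford multiplication are both skew-Hermitian. Their product is therefore real and generically nonzero. There is no ``real part'' to discard, because the current is automatically real. You also cannot assume $\langle\Psi,\chi_*(\tau)\Psi\rangle=0$, since the theorem concerns an arbitrary coupled pair; for the paper's $\delta_0>0$ solutions, $\langle\Psi,\chi_*(\tau_3)\Psi\rangle=\frac{i\delta_0}{2r}\neq0$. For even $n$ there is no auxiliary $\C^2$ factor to fall back on.

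\textbf{Consequence for your family.} Letting $c$ range over all of $\C^{2^{n/2}}\setminus\{0\}$ (all parallel spinors on $T^n$) does not give DYM pairs.
\begin{itemize}
\item If both chiral parts are nonzero, then $\langle f_\alpha\cdot\phi_c,\phi_c\rangle=2i\,\Im\langle f_\alpha\cdot\phi_c^+,\phi_c^-\rangle$, which is in general nonzero.
\item If $|\phi_c^+|\le|\phi_c^-|$, the coefficient of $\mathfrak{J}[\Psi]$ is nonpositive, and no rescaling of $\Psi$ repairs this. When $|\phi_c^+|=|\phi_c^-|$ it would even force $\omega$ to be Yang--Mills.
\end{itemize}

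\textbf{The missing idea.} Restrict to parallel spinors of \emph{pure positive chirality}. Then $f_\alpha\cdot\phi_c\in\Sigma^-B$ is orthogonal to $\phi_c$, so the $TB$-components vanish regardless of $\Psi$. Also $\langle\omega_B\phi_c,\phi_c\rangle=|\phi_c|^2$, so normalising $|\phi_c|\equiv1$ gives a DYM pair. This is exactly the paper's choice in the even case. Its criterion is $|\Phi^+|^2-|\Phi^-|^2\equiv1$, together with either $\langle\Phi,Y\cdot\Phi\rangle=0$ or $\langle\Psi,\chi_*(\tau)\Psi\rangle=0$. On $T^n$ this yields $k=2^{n/2-1}$.

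\textbf{The odd case.} Your model $\Sigma M\otimes\Sigma B\otimes\C^2$ does work, but only after you fix $u\in\C^2$ appropriately. The expectation value of the Pauli matrix attached to $TB$ must vanish, and that of the one attached to $TM$ must be positive. This is precisely what the paper's lift $cf^{-3/2}(\Psi\oplus\frac{-i}{2|c|^2}\Psi)$ encodes: $\Re(\alpha\bar\beta)=0$ and $\Im(\alpha\bar\beta)=\frac12$ for $f\equiv1$. The paper reaches odd $n>1$ by combining this with the even case via $B=B^1\times B^{n-1}$. That is equivalent to your direct construction once $u$ is chosen correctly.
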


As both the constructions leading to Theorem \ref{thm-existence-3-dim} and to Theorem \ref{thm-existence-any-dim} are explicit this provides explicit examples of coupled DYM pairs on (closed) Riemannian manifolds which are, to the authors' best knowledge, the first such examples to be found.

Let us briefly sketch out a few details about the construction.
The appropriate spherically symmetric ansatz for the DYM equations involves (see \S \ref{sec-su(2)-sph-sym} for the details):
\begin{itemize}[itemsep=0.1cm]
    \item The background manifold $M = N \times \Sph^2$, where $N$ is a one-dimensional manifold, i.e.\ either $N=\R$ or $N=\Sph^1$, endowed with the metric $g = r(s)^2 (\diff s^2 + g_{\Sph^2})$ for some metric coefficient $r : N \to \R$. We will denote this Riemannian manifold by $(N \times \Sph^2)_r$.
    \item A complex-valued function $z : N \to \C$ parametrizing the connection $\omega$.
    \item A quaternion-valued%
    \footnote{
    \label{foot-quaternions}
    Here and throughout the rest of this work, quaternions are represented as pairs of complex numbers, in the sense that a quaternion $q \in \Quat$ can uniquely be written as $q = z+jw$ for $z,w \in \C$, where $j$ is the quaternionic unit with $j^2=-1$ and $ij = -ji$. We will refer to $z$ and $w$ respectively as the complex and quaternionic part of $q$, and denote this by $\QuatRe(q)=z$ and $\QuatIm(q)=w$.
    }
    function $\xi : N \to \Quat$ parametrizing the spinor field $\Psi$, and depending also on a non-negative odd integer $n$ that determines the representation of $\SU(2)$ used for the twisting bundle.
\end{itemize}
Then the DYM equations \eqref{eq-dym} can be written as
\begin{subequations}
    \begin{empheq}[left=\empheqlbrace]{align}
        \label{eq-constraint-intro}
        0&= \Im(\conj{z}Y) + \frac14 |\xi|^2,\\
        \label{eq-z-intro}
        0&= z' - r Y,\\
        \label{eq-Y-intro}
        0&= Y' + \tfrac{1}{r} z(1-|z|^2) + \lambda \QuatRe(\xi) \QuatIm(\xi),\\
        \label{eq-xi-intro}
        0&= \xi' +  i\lambda \xi \conj{z} j.
    \end{empheq}
\end{subequations}
where $\lambda := (n+1)/2$ and $Y$ is an auxiliary variable used for the first-order reduction.
Here, \eqref{eq-constraint-intro} can be viewed as a constraint equation, since it is preserved by the evolution equations (\ref{eq-z-intro}--\ref{eq-xi-intro}) assuming that it holds initially.

One of the main difficulties with studying the system (\ref{eq-constraint-intro}--\ref{eq-xi-intro}) in general is that it is non-autonomous, due to the presence of the metric coefficient $r$, which should be viewed as a fixed externally given parameter. 
In particular, much of the standard theory of dynamical systems does not apply directly. Furthermore, there are no conditions that this coefficient should satisfy a priori (except in the case $N=\Sph^1$, where $r$ should be periodic in $s$), which often makes it difficult to make claims about the monotonicity of quantities, and consequently the analysis also becomes difficult in general. 
One way of approaching this issue would be to impose certain monotonicity and/or asymptotic conditions on $r$, and then to study the system with respect to the different cases.

In this paper, as a first step, we take a different approach by instead fixing certain quantities in a way that makes the system simpler to study (e.g.\ $|z|\equiv\text{const}$ considerably simplifies \eqref{eq-Y-intro}). 
This then also reduces the number of degrees of freedom, and in particular fixes the metric coefficient $r$.
It may not be come as too much of a surprise that this approach yields some global solutions of the system, so that they are defined on the manifold $(\R \times \Sph^2)_r$. However, it is surprising that this also approach also provides solutions that are periodic, and thus induce DYM pairs on \emph{closed} Riemannian manifolds of the form $(\Sph^1 \times \Sph^2)_r$.

For the lifting procedure resulting in Theorem \ref{thm-existence-any-dim} we first note that, with $\dim M = 2m-1$, $\dim B = n$ and $f \in C^\infty(B; \R_{>0})$, the spinor bundle $\Sigma (B \times_f M)$ of the warped product $(B\times M, h + f^2g)$ is isomorphic to
\begin{itemize}
    \item $\Sigma B \otimes (\Sigma M \oplus \Sigma M)$ for $n$ odd,
    \item $\Sigma B \otimes \Sigma M$ for $n$ even.
\end{itemize}
By starting at a spherically symmetric $3$-manifold $(M,g)$ as in Theorem \ref{thm-existence-3-dim} admitting a coupled DYM pair $(\omega, \Psi)$ and iterating the constructions detailed in \S \ref{sec-lifting} we then construct lifts to product manifolds of any dimension. More explicitly, let $(B,h)$ be a Riemannian spin manifold with $\dim B = n$ and $f \in C^\infty(B; \R_{>0})$ satisfying:
    \begin{enumerate}[label = (\roman*)]
        \item  if $n = 1$, $B$ is an open interval, $\R$ or $\Sph^1$ equiped with the trivial spin structure and $f\in C^\infty(B;\R_{>0})$ is arbitrary,
        \item  if $n$ is even, $(B,h)$ admits a nontrivial globally defined parallel spinor and $f \equiv 1$,
        \item if $n>1$ and odd, $B = B^1 \times B^{n-1}$ where $B^1$ satsifies the conditions in (i) and $B^{n-1}$ those in (ii) and $f \equiv 1$.
    \end{enumerate}
Then $(\omega,\Psi)$ induces a coupled DYM pair $(\widetilde{\omega},\widetilde{\Psi})$ on the warped product $(B\times_f M)$ where in each case $\widetilde{\omega} = \mathrm{pr}_2^*\omega$ and $\widetilde\Psi$ is given by:
\begin{enumerate}[label = (\arabic*)]
        \item for $B$ as in (i), $\widetilde\Psi = cf^{-\frac{3}{2}}(\Psi \oplus \frac{-i}{2|c|^2}\Psi)$ where $c \in \C\setminus\{0\}$,
        \item for $B$ as in (ii), $\widetilde\Psi  = \Phi \otimes \Psi$ where $\Phi \in \Gamma(\Sigma^+B)$ is a positive chirality parallel spinor satisfying $|\Phi| \equiv 1$,
        \item for $B = B^1 \times B^{n-1}$ as in (iii), $\widetilde{\Psi} = c\Phi \otimes(\Psi \oplus \frac{-i}{2|c|^2}\Psi)$ where $c \in \C\setminus\{0\}$ and $\Phi \in \Gamma(\Sigma^+M^{n-1})$ is a positive chirality parallel spinor satisfying $|\Phi| \equiv 1$.
\end{enumerate}
There are closed examples of admissible spin manifolds $(B,h)$ for each of the cases (i--iii).

\section{\texorpdfstring{$\SU(2)$}{SU(2)}-ansatz in spherical symmetry}
\label{sec-su(2)-sph-sym}

\subsection{General theory of invariant bundles, connections, and sections}
\label{subsec-K-symmetry}
Let $K$ be a compact Lie group.
We say that a (semi-)Riemannian manifold $(M,g)$ is \emph{$K$-symmetric} if $K$ acts on $M$ by isometries, and $K$ is then referred to as the \emph{symmetry group}.
Provided that $K$ is compact, there exists an open and dense submanifold of $M$ that is diffeomorphic to $N \times K/H$, for some connected manifold $N$ and a closed subgroup $H \subset K$.
With a minor loss of generality (i.e.\ by removing the singular orbits), we will assume therefore that
\begin{equation*}
    M \cong N \times K/H,
\end{equation*}
where $K$ acts by left multiplying the cosets in the second factor. 

Let $G$ be another compact Lie group, and let $P\to M$ be a principal $G$-bundle. 
If $M$ is $K$-symmetric and the natural left action of $K$ lifts to a faithful left action on $P$ by bundle automorphisms (i.e.\ the left action of $K$ and the right action of $G$ commute), then $P$ is also said to be \emph{$K$-symmetric}.
Two $K$-symmetric principal $G$-bundles $P\to M$ and $Q\to M$ are said to be \emph{equivalent} if there exists a bundle diffeomorphism $P \to Q$ which is equivariant with respect to both the right action of $G$ and the left action of $K$.

A map $\phi : P \to W$ for a vector space $W$ (or more generally a $W$-valued tensor field on $P$) is said to be \emph{$K$-invariant} if $\ell_k^\ast \phi = \phi$.
In particular, this definition applies to connections $\omega \in \Omega^1(P,\g)$.
Moreover, if $\rho : G \to \GL(W)$ is a representation and $\phi : P \to W$ is $G$-equivariant in the sense that $r_g^*\phi = \rho(g^{-1})\phi$ for all $g \in G$, then $\phi$ can be identified with a section $\Phi$ of the associated vector bundle $E = P \times_\rho W$ via $\Phi_{\pi(p)} = [p, \phi(p)]$. 
Then $\Phi \in \Gamma(E)$ is said to be $K$-invariant if and only if $\phi$ is $K$-invariant, or equivalently if and only if $k \cdot \Phi_{\pi(p)} = \Phi_{\pi(kp)}$ for all $k \in K$ and $p \in P$, with respect to the natural induced left action of $K$ on $E$.

There is a well-known theory classifying invariant principal bundles and associated objects, which we briefly recall, and refer to \cites{Kunzle, Brodbeck, Harnad} for the details. 
Let $M = N \times K/H$ be a $K$-symmetric manifold and $G$ a Lie group.
Denote the Lie algebras of $K,H,G$ respectively by $\k, \h, \g$.
\begin{enumerate}
    \item The equivalence classes of $K$-symmetric principal $G$-bundles over $M$ are classified by pairs $([\lambda], Q)$ where $[\lambda]$ is a conjugacy class of Lie group homomorphisms $\lambda : H \to G$, and $Q \to N$ is a principal $Z$-bundle, where $Z$ is the center of $\lambda(H)\subset G$. Explicitly, $P = P([\lambda], Q) \to M$ is constructed as the quotient $(Q \times K \times G) / (Z \times H)$ with respect to the equivalence relation
    \begin{equation*}
        (q,k,g) \sim (qz, kh, z^{-1}\lambda(h^{-1})g), \qquad (z,h) \in Z \times H,
    \end{equation*}
    equipped with the projection $\pi([q,k,g]) = (\pi_Q(q),kH)$, and left (resp.\ right) action of $K$ (resp.\ $G$) given by
    \begin{equation*}
        k_0 \cdot [q,k,g] = [q,k_0k, g], \qquad [q,k,g]\cdot g_0 = [q,k,gg_0],
    \end{equation*}
    for $k_0 \in K$ and $g_0 \in G$.
    \item The $K$-invariant connections $\omega$ on $P([\lambda], Q)$ are classified by pairs $(\Lambda, \eta)$, where $\eta$ is a connection on $Q$, and $\Lambda : Q \to \text{Hom}(\k, \g)$ is a map satisfying the Wang conditions
    \begin{equation}\label{eq-wang-conditions}
        \Lambda \circ \Ad_h = \Ad_{\lambda(h)} \circ \Lambda, \qquad \Lambda|_{\h} = \diff\lambda,
    \end{equation}
    for all $h \in H$. 
    Explicitly, if we denote by $\Pi : Q \times K \times G \to P([\lambda], Q)$ the natural projection, then the invariant connection $\omega$ induced by $(\Lambda, \eta)$ is given by
    \begin{equation*}
        \omega_{[q,k,g]}\circ \diff \Pi(\alpha \oplus \beta \oplus \gamma) = \Ad_{g^{-1}} \left( \Lambda_q \circ \mu^K_k(\beta) + \eta_q(\alpha) \right) + \mu^G_g(\gamma),
    \end{equation*}
    where $\mu^K$ and $\mu^G$ are the left-invariant Maurer-Cartan forms of $K$ and $G$ respectively, and $\alpha \in T_q Q, \, \beta \in T_k K,\, \gamma \in T_g G$.
    \item The $K$-invariant sections $\Phi \in \Gamma(P([\lambda], Q) \times_\rho W)$ for a representation $\rho : G \to \GL(W)$ are characterized by $Z$-equivariant maps $\phi : Q \to W$ (i.e.\ sections of the associated bundle $Q \times_\rho W$) such that 
    \begin{equation}\label{eq-section-invariance-condition}
        \rho(\lambda(h)) \phi = \phi
    \end{equation}
    for all $h \in H$, explicitly
    \begin{equation*}
        \Phi_{(\pi_Q(q), kH)} = [[q,k,g], \rho(g^{-1})\phi(q)] = [[q,k,e], \phi(q)].
    \end{equation*}
\end{enumerate}

A choice of (local) sections $\xi$ of $Q \to N$ and $\sigma$ of $K \to K/H$ induces the section $p(s,kH) = [\xi(s), \sigma(kH), e]$ of $P([\lambda], Q)$, with respect to which the invariant connection $\omega$ corresponding to $(\Lambda, \eta)$ and section $\Phi$ corresponding to $\phi$ respectively take the form
\begin{equation*}
    (p^\ast \omega)_{(s,kH)} = (\xi^*\Lambda)_s \circ (\sigma^\ast \mu^K)_{kH} + (\xi^*\eta)_s,
    \quad
    \Phi_{(s,kH)} = [p(s,kH), (\xi^*\phi)(s)].
\end{equation*}

Let us note that if the classifying bundle $Q$ is trivial%
\footnote{
In our setting, we will have $Z=\U(1)$ and either $N = \R$ or $N = \Sph^1$, so that any principal $Z$-bundle $Q$ over $N$ is necessarily trivial. More generally, this is also the case if $N$ is contractible for arbitrary $\dim N$ and $Z$.
}
then we can identify $Q \cong N \times Z$ and elements of $P([\lambda], Q)$ can be written as $[(s,z),k,g]$ for $s \in N$ and $z\in Z$.
In this case, we can identify $P([\lambda], Q)$ with a simpler principal bundle $P[\lambda]$ defined as the quotient $N \times (K \times G) / H$ with respect to the equivalence relation
\begin{equation*}
    (k,g) \sim (kh, \lambda(h^{-1})g), \qquad (k,g)\in K\times G, \quad h \in H,
\end{equation*}
equipped with the natural projection, left action of $K$, and right action of $G$.
The isomorphism $P([\lambda], Q) \to P[\lambda]$ is explicitly given by $[(s,z),k,g] \mapsto (s, [k,zg])$.
In this case, the maps $(\Lambda, \eta)$, $\phi$ used to classify invariant connections/sections can be globally viewed as smooth maps $\Lambda : N \to \text{Hom}(\k, \g)$, $\eta : N \to \mathfrak{z}$ (here $\mathfrak{z}$ denotes the Lie algebra of $Z$), and $\phi : N\to W$, satisfying the respective invariance conditions \eqref{eq-wang-conditions} and \eqref{eq-section-invariance-condition}.

Throughout the rest of this work, we will apply this general theory to the case when the structure group $G=\SU(2)$.
We will also assume the symmetry group is $\SU(2)$, and that $H=\U(1)$. The corresponding objects will be said to be \emph{spherically symmetric}.

\subsection{Some important matrices and maps}

In this subsection we fix the notation and recall some basic facts for certain important objects.
We define a basis for $\su(2)$ by
\begin{equation}\label{eq-su(2)-basis}
    \tau_k = -\frac{i}{2} \sigma_k,
\end{equation}
where
\begin{equation*}
    \sigma_1 = \begin{bmatrix}
        0 & 1\\
        1 & 0
    \end{bmatrix},
    \qquad
    \sigma_2 = \begin{bmatrix}
        0 & -i\\
        i & 0
    \end{bmatrix},
    \qquad
    \sigma_3 = \begin{bmatrix}
        1 & 0\\
        0 & -1
    \end{bmatrix},
\end{equation*}
are the Pauli matrices.
Here, $\tau_3$ spans the subalgebra $\mathfrak{u}(1)$.
If we equip $\mathfrak{su}(2)$ with the usual Ad-invariant inner product
\begin{equation}\label{eq-su(2)-metric}
    \langle Z, W \rangle_{\mathfrak{su}(2)} = 2\tr(Z^\dagger W) = -2\tr(ZW),
\end{equation}
then the basis elements $\tau_k$ are orthonormal.

Note that the basis elements satisfy
\begin{equation*}
    \tau_j\tau_k = \frac12\epsilon_{jk\ell} \tau_\ell - \frac{1}{4}\delta_{jk}I_2
\end{equation*}
where $\epsilon_{jk\ell}$ is the sign of the permutation of the indices, $\delta_{jk}$ is the Kronecker delta, and $I_2$ is the two-dimensional identity matrix.
In particular,
\begin{equation*}
    [\tau_j, \tau_k] = \epsilon_{jk\ell} \tau_\ell, \qquad \{\tau_j, \tau_k\} = -\frac{1}{2} \delta_{jk} I_2.
\end{equation*}
Thus, we also see that the matrices
\begin{equation}\label{eq-clifford-basis}
    \gamma_k = 2\tau_k = -i\sigma_k    
\end{equation}
form a representation of the Clifford algebra of $\R^3$,
i.e.\ $\{\gamma_j, \gamma_k\} = -2\delta_{jk} I_2$.
The representation $\gamma_k$ of the Clifford algebra induces the spinor representation of $\Spin(3)$.
The Lie algebra $\spin(3)$ has a basis of Clifford products of the form $\gamma_i \gamma_j$ for $i<j$.
Since
\begin{equation*}
    \frac12 \gamma_2\gamma_3 = \tau_1, \qquad \frac12 \gamma_3\gamma_1 = \tau_2, \qquad \frac12 \gamma_1\gamma_2 = \tau_3.
\end{equation*}
we see that $\spin(3) = \su(2)$ as Lie algebras, and in particular $\Spin(3) = \SU(2)$ since both groups are simply connected.
The spinor representation of $\Spin(3)$ coincides to the fundamental representation of $\SU(2)$, explicitly
\begin{equation*}
    \begin{bmatrix}
        z & -\conj{w} \\ w & \conj{z}        
    \end{bmatrix}
    =
    \Re(z) I_2 - \Im(w) \gamma_1 + \Re(w) \gamma_2 - \Im(z) \gamma_3,
\end{equation*}
where $|z|^2+|w|^2 = 1$.

It will also be convenient to define
\begin{equation}\label{eq-so(3)-basis}
    L_1 = \begin{bmatrix}
        0 & 0 & 0\\
        0 & 0 & -1\\
        0 & 1 & 0
    \end{bmatrix},
    \qquad
    L_2 = \begin{bmatrix}
        0 & 0 & 1\\
        0 & 0 & 0\\
        -1 & 0 & 0
    \end{bmatrix},
    \qquad
    L_3 = \begin{bmatrix}
        0 & -1 & 0\\
        1 & 0 & 0\\
        0 & 0 & 0
    \end{bmatrix}
\end{equation}
as a basis for the Lie algebra $\so(3)$, satisfying
\begin{equation*}
    [L_j, L_k] = \epsilon_{jk\ell} L_\ell.
\end{equation*}
In fact, the natural double covering map
\begin{equation}\label{eq-su(2)-so(3)-double-cover}
    \SU(2) \to \SO(3), \qquad g \mapsto \Ad_g \in \SO(\su(2)) \cong \SO(3)
\end{equation}
induces the Lie algebra isomorphism
\begin{equation*}
    \su(2) \to \so(3), \qquad X \mapsto \ad(X) \in \so(\su(2)) \cong \so(3),
\end{equation*}
and one can explicitly verify that this isomorphism maps
$\tau_k \mapsto L_k$,
where the $L_k$-matrices are expressed with respect to the basis $(\tau_1,\tau_2,\tau_3)$.

Next, we recall the Hopf fibration
\begin{equation*}
    \pi_{\text{Hopf}} : \Sph^3 \cong \SU(2) \to \Sph^2 \cong \SU(2)/\U(1), \qquad (z,w) \mapsto (2\conj{z} w, |z|^2 - |w|^2)
\end{equation*}
where one identifies $\mathbb{S}^3 \subset \C^2 \cong \Quat$ with $\SU(2)$ via
\begin{equation*}
    z + jw \leftrightarrow (z,w) \leftrightarrow \begin{bmatrix}
        z & -\conj{w}\\
        w & \conj{z}
    \end{bmatrix},
    \qquad
    |z|^2+|w|^2 = 1,
\end{equation*}
where $j$ is the quaternionic imaginary unit.
One can show that the Hopf fibration is equivariant in the sense that if $k \in \SU(2)$, then
\begin{equation}\label{eq-hopf-equivariant}
    \pi_{\text{Hopf}} \circ \ell_k = \Ad_k \circ \pi_{\text{Hopf}},
\end{equation}
where $\ell_k$ denotes left multiplication by $k$, and on the right-hand side it is understood that $\R^3 \cong \su(2)$ by identifying $\tau_1,\tau_2,\tau_3$ with the standard Cartesian unit vectors.
In particular, with this identification we can also write $\pi_{\text{Hopf}}(k) = \Ad_k \circ \pi_{\text{Hopf}}(I_2) = \Ad_k(\tau_3)$.

The map
\begin{equation}\label{eq-hopf-section}
    \sigma : (\theta,\varphi) \mapsto \exp(\varphi \tau_3) \exp(\theta \tau_2)
    = 
    \begin{bmatrix}
    e^{-\frac{i\varphi}{2}} \cos\frac{\theta}{2} & -e^{-\frac{i\varphi}{2}}\sin\frac{\theta}{2}  \\ 
    e^{\frac{i\varphi}{2}}\sin\frac{\theta}{2} & e^{\frac{i\varphi}{2}} \cos\frac{\theta}{2} 
    \end{bmatrix} 
\end{equation}
defines a local section of the bundle $\mathbb{S}^3 \to \mathbb{S}^2$ with respect to the standard spherical coordinates $(\theta,\varphi)$ defined on $\mathbb{S}^2$ minus the poles.
We observe that
\begin{align}
    \nonumber
    \Ad_{\sigma(\theta,\varphi)} (\tau_1)
    &= \cos\theta\cos\varphi \, \tau_1 + \cos\theta\sin\varphi \, \tau_2 - \sin\theta \, \tau_3 = \tau_\theta,\\
    \label{eq-Ad-cartesian-spherical}
    \Ad_{\sigma(\theta,\varphi)} (\tau_2)
    &= -\sin\varphi \,\tau_1 + \cos\varphi \,\tau_2 = \tau_\varphi,\\
    \nonumber
    \Ad_{\sigma(\theta,\varphi)} (\tau_3)
    &= \sin\theta\cos\varphi \, \tau_1 + \sin\theta\sin\varphi \, \tau_2 + \cos\theta \, \tau_3 = \tau_r,
\end{align}
where $\tau_r, \tau_\theta, \tau_\varphi$ are the "spherical" $\tau$-matrices.
In particular, the image of $\sigma(\theta,\varphi)$ under the natural two-fold covering $\SU(2) \to \SO(3)$ provides the relation between the Cartesian basis elements $\tau_1,\tau_2,\tau_3$ and the spherical basis elements $\tau_\theta, \tau_\varphi, \tau_r$.

Finally, we note that if $\SU(2)/\U(1) \cong \Sph^2$ is equipped with the canonical round metric, then the equivariance condition \eqref{eq-hopf-equivariant} together with the identities \eqref{eq-Ad-cartesian-spherical} shows that the Hopf map is a Riemannian submersion.

\subsection{Base manifold}
\label{subsec-base}

Let $N$ be a smooth connected one-dimensional manifold (i.e.\ either $N\cong \R$ or $N\cong \Sph^1$). We will consider manifolds of the form
\begin{equation*}
    M = N \times \Sph^2.
\end{equation*}
In particular, such manifolds are $\SU(2)$-symmetric in the sense of \S \ref{subsec-K-symmetry}, as $\Sph^2 \cong \SU(2) / \U(1)$ via the Hopf fibration. 
The most general spherically symmetric (i.e.\ $\SU(2)$-invariant) metric on $M$ then takes the form
\begin{equation}\label{eq-sph-sym-metric}
    g = \alpha(s)^2 \, \diff s^2 + r(s)^2\, g_{\Sph^2},
\end{equation}
where $\alpha, r : N \to \R$ are smooth positive functions, $s$ is a coordinate on $N$, 
and $g_{\Sph^2}$ is the standard round metric on $\Sph^2$, i.e.\
$g_{\Sph^2} = \diff\theta^2 + \sin^2\theta \, \diff\varphi^2$
with respect to the standard spherical coordinates $(\theta,\varphi)$. 

\begin{remark}
    \label{remark-s-coordinate}
    In the case $N = \Sph^1$, one can also view $M = \Sph^1\times \Sph^2$ locally as a submanifold of $\R \times \Sph^2$ equipped with the metric \eqref{eq-sph-sym-metric} where $\alpha$ and $r$ are extended periodically, so we can without loss of generality take $N$ to be an interval.
    Only one of the functions $\alpha$, $r$ is then truly a free parameter, since we can always transform the coordinate $s$ to fix one of the parameters as desired (while possibly also shrinking $\R$ to some smaller open interval).%
    \footnote{
    Note that this may destroy the original periodicity, but such properties are best studied manually in the end anyway. 
    }
    Some natural choices here include:
    \begin{itemize}[itemsep=0.1cm]
        \item The \emph{radial coordinate} $s \equiv r$, which describes the radius of the spheres. Then one views $\alpha$ as a function of $r$. This choice has the disadvantage of breaking down at points where $r$ is stationary, leading to artificial coordinate singularities, and therefore will not be preferred in this paper.
        \item The \emph{arc-length coordinate} $s$ such that $\alpha \equiv 1$, so that $(M,g)$ can be viewed as a warped product $I \times_r \Sph^2$ for some interval $I\subset \R$.
        \item The \emph{log-polar coordinate} $s$ such that $\alpha \equiv r$. This choice may appear less natural at first but is known to be convenient for studying the Yang-Mills equation in spherically symmetric contexts, so we will generally prefer this choice later in the work. For convenience, we use the notation $(N \times \Sph^2)_r$ to denote the manifold $N \times \Sph^2$ equipped with the Riemannian metric $r^2(\diff s^2 + g_{\Sph^2})$.
    \end{itemize}
\end{remark}

A natural "spherical" frame for $(M,g)$ is given by
\begin{equation}\label{eq-sph-frame}
    e_s = \frac{1}{\alpha} \, \partial_s, \qquad 
    e_\theta = \frac{1}{r} \, \partial_\theta, \qquad
    e_\varphi = \frac{1}{r\sin\theta} \, \partial_\varphi.
\end{equation}
The Levi-Civita connection symbols in this frame are given in Table \ref{tab:levi-civita}.
We can view the Levi-Civita connection also as a $\so(3)$-valued 1-form $\omega_{\scriptscriptstyle\mathrm{LC}}$ on the frame bundle $\SO(TM)$, and write
\begin{align*}
    e^\ast\omega_{\scriptscriptstyle\mathrm{LC}} &= \frac{r'}{\alpha r} (e^\theta \otimes L_2 - e^\varphi \otimes L_1)  + \frac{\cot\theta}{r} \, e^\varphi \otimes L_3  \\
    &= \frac{r'}{\alpha} \left(\diff\theta \otimes L_2 - \sin\theta\,\diff\varphi \otimes L_1\right)  + \cos\theta \, \diff\varphi \otimes L_3
\end{align*}
where prime denotes differentiation by $s$, the frame $e$ is the local section of $\SO(TM)$ given by (\ref{eq-sph-frame}) ordered as $(e_s,e_\theta,e_\varphi) = (e_3, e_1, e_2)$ (this choice of ordering is essentially due to \eqref{eq-Ad-cartesian-spherical}, which will become more apparent in the next section), and $L_i$ are the standard skew-symmetric matrices forming a basis for $\so(3)$, cf.\ \eqref{eq-so(3)-basis}.
Thus, after choosing a spin structure, the corresponding spin Levi-Civita connection will be given by
\begin{equation}\label{eq-levi-civita-form}
    \epsilon^\ast \omega_{\scriptscriptstyle\mathrm{spin}}
    = (\rho_*)^{-1} \circ e^\ast\omega_{\scriptscriptstyle\mathrm{LC}}
    = \frac{r'}{\alpha} \left(\diff\theta \otimes \tau_2 - \sin\theta\,\diff\varphi \otimes \tau_1\right)  + \cos\theta \, \diff\varphi \otimes \tau_3
\end{equation}
in a spin frame $\epsilon$ corresponding to $e$, where $\rho : \Spin(3) \cong \SU(2) \to \SO(3)$ is the two-fold covering, $\tau_i$ are the skew-Hermitian matrices forming a basis for $\su(2)$, cf.\ \eqref{eq-su(2)-basis}.

\begin{table}
    \centering
    \renewcommand{\arraystretch}{2}
    \begin{tabular}{|c||c|c|c|}
        \hline
        $\nabla_{e_j} e_k$  & $k=s$ & $k=\theta$ & $k=\varphi$ \\\hline\hline
        $j=s$ & 0 & 0 & 0  \\\hline
        $j=\theta$ & $\frac{r'}{\alpha r} e_\theta$ & $- \frac{r'}{\alpha r} e_s$ & 0  \\\hline
        $j=\varphi$ & $\frac{r'}{\alpha r} e_\varphi$ & $\frac{\cot\theta}{r} e_\varphi$ & $- \frac{r'}{\alpha r} e_s - \frac{\cot\theta}{r} e_\theta $ \\\hline
    \end{tabular}
    \vskip0.5cm
    \caption{The covariant derivatives
    $\nabla_{e_j} e_k$ for the frame (\ref{eq-sph-frame}).}
    \label{tab:levi-civita}
\end{table}

\subsection{Invariant principal bundles}

We now wish determine the $\SU(2)$-symmetric principal fiber bundles over $M = N \times (\SU(2)/\U(1))$. 
In our case, we will work with the structure group $G = \SU(2)$, which due to the isomorphism $\SU(2) \cong \Spin(3)$ also covers principal $\Spin(3)$-bundles.
In view of the discussion in \S \ref{subsec-K-symmetry}, to determine $\SU(2)$-symmetric principal $\SU(2)$-bundles, we need to classify the conjugacy classes of homomorphisms $\lambda : \U(1) \to \SU(2)$,
as well as principal $Z$-bundles $Q$ over $N$, where $Z\cong \U(1)$ is the center of $\lambda(\U(1))$.
Since $N=\R$ or $N=\Sph^1$, we see that such a bundle $Q$ is necessarily trivial, so it suffices to classify the conjugacy classes homomorphisms $\U(1)\to\SU(2)$.
In fact, it is well-known, and simple to show, that the conjugacy classes of homomorphisms $\U(1) \to \SU(2)$ are given by integral powers
\begin{equation*}
    \lambda_m : \exp(t\tau_3) \mapsto \exp(mt\tau_3)
\end{equation*}
for $m\in\Z_{\geq 0}$.
We recall from \S \ref{subsec-K-symmetry} that we can then view the corresponding bundle 
$P_m = P[\lambda_m] \to M$ as the space $N \times (\SU(2)\times\SU(2))/\U(1)$ with respect to the equivalence relation
\begin{equation*}
    (k,g) \sim \left(k\exp(t\tau_3), \exp(-mt\tau_3)g\right), \quad (k,g) \in \SU(2) \times \SU(2),
\end{equation*}
and where the projection and actions of $K$ and $G$ are naturally defined via
\begin{equation*}
    \pi(s, [k,g]) = (s, kH), \quad k_0 \cdot (s, [k, g]) = (s, [k_0k, g]), \quad (s, [k, g]) \cdot g_0 = (s, [k, gg_0]),
\end{equation*}
for $s \in N$, $k, k_0 \in K$ and $g, g_0 \in G$.
Note that the bundle $P_m$ is trivial if (and only if) $m=0$ and $m=1$, with a global trivialization defined by
\begin{equation*}\label{eq-global-trivialization-P}
    M \times \SU(2) \to P_m, \qquad ((s,kH), g) \mapsto \begin{cases}
        (s, [k, g]), & m=0,\\ 
        (s, [k, k^{-1}g]), & m=1.
    \end{cases}
\end{equation*}

\subsection{Invariant connections}
\label{sec-invariant-connections}

Next, we determine the invariant connections on $P_m$, i.e.\ we determine the pairs $(\Lambda, \eta)$ satisfying the Wang conditions \eqref{eq-wang-conditions}.
This derivation is well-known in literature but let us recall a few details.
The Wang conditions associated with the homomorphism $\lambda_m$ work out to
\begin{equation*}
    \Lambda(\tau_1) = -m[\tau_3, \Lambda(\tau_2)],\qquad
    \Lambda(\tau_2) = m[\tau_3, \Lambda(\tau_1)],\qquad
    \Lambda(\tau_3) = m\tau_3.
\end{equation*}
Writing out $\Lambda(\tau_1), \Lambda(\tau_2)$ in the basis $\tau_i$, the first two identities imply that 
\begin{equation*}
    \Lambda(\tau_1) = 
    \begin{cases}
        0, & m\not=1,\\
        v(s)\tau_1 - w(s)\tau_2, & m=1
    \end{cases},
    \qquad
    \Lambda(\tau_2) = 
    \begin{cases}
        0, & m\not=1,\\
        w(s)\tau_1 + v(s)\tau_2, & m=1
    \end{cases},
\end{equation*}
for real-valued functions $w,v$.
On the other hand, for $\eta$, we note that $\mathfrak{z} = \u(1) = \text{span}(\tau_3)$, so that $\eta = a(s)\diff s \otimes \tau_3$ for a real-valued $a$. 
It follows that a general invariant connection $\omega$ on $P_m$ is determined by three real-valued functions $a,w,v$, such that $v, w \equiv 0$ if $m\not= 1$.

Now consider the section $\sigma$ (\ref{eq-hopf-section}) of the Hopf bundle, and the corresponding section $p(s,\theta,\varphi) = (s, [\sigma(\theta,\varphi), e])$ of $P_m$, where $(\theta,\varphi)$ are the standard spherical coordinates on $\Sph^2$.
One calculates
\begin{equation*}
    \sigma^\ast \mu_{\SU(2)} = \sigma^{-1}\diff\sigma 
    = \diff\theta \otimes \tau_2 - \sin\theta\,\diff\varphi \otimes \tau_1 + \cos\theta \, \diff\varphi \otimes \tau_3,
\end{equation*}
and therefore
\begin{align*}
    \Lambda \circ \sigma^\ast \mu_{\SU(2)}
    =&\,
    v(s)(\diff\theta \otimes \tau_2 - \sin\theta\,\diff\varphi \otimes \tau_1)
    + w(s)(\diff\theta \otimes \tau_1 + \sin\theta \,\diff\varphi \otimes \tau_2) + m\cos\theta \, \diff\varphi \otimes \tau_3,
\end{align*}
where it is understood that $v,w \equiv 0$ if $m\not=1$.
Thus the connection $\omega$ can be written, with respect to the section $p$, as
\begin{align*}
    p^*\omega =&\, a(s)\, \diff s \otimes \tau_3
    + m\cos\theta \,\diff\varphi \otimes \tau_3 
    \\[0.1cm]
    &+ v(s)(\diff\theta \otimes \tau_2 - \sin\theta\, \diff\varphi \otimes \tau_1)
    + w(s)(\diff\theta \otimes \tau_1 + \sin\theta\, \diff\varphi \otimes \tau_2),
\end{align*}
for some functions $a,w,v$, such that $v, w \equiv 0$ if $m\not= 1$.


Gauge transformations of the form $p\mapsto p\exp\left(-f(s)\tau_3\right)$ do not change the overall form of the connection but send
\begin{equation*}
    a \mapsto a-f', \quad w \mapsto w\cos f - v\sin f, \quad v \mapsto w\sin f + w\cos f. 
\end{equation*}
Thus we may assume $a \equiv 0$ (i.e.\ $\eta \equiv 0$) without losing generality.%
\footnote{Alternatively, one could choose $f$ to make any of the coefficients $w,v,a$ zero, while keeping the other two as variables.}
For the rest of the work, we take $m=1$ and $a = 0$, so that the connection has the form
\begin{equation}\label{eq-connection-ansatz}
    p^*\omega = v(s)(\diff\theta \otimes \tau_2 - \sin\theta\, \diff\varphi \otimes \tau_1)
    + w(s)(\diff\theta \otimes \tau_1 + \sin\theta\, \diff\varphi \otimes \tau_2) + \cos\theta \,\diff\varphi \otimes \tau_3 .
\end{equation}
The curvature form $F_\omega$ of $\omega$ is given by (with respect to the section $p$ and standard spherical coordinates $(\theta, \varphi)$)
\begin{align*}
    F_\omega =&\,
    - (1-v^2-w^2)\, \diff\theta \wedge \sin\theta\, \diff\varphi \otimes \tau_3
    \\[0.1cm]
    &
    + w'\, \diff s\wedge (\diff\theta \otimes \tau_1 + \sin\theta\, \diff\varphi \otimes \tau_2)
    \\[0.1cm]
    &
    + v' \, \diff s \wedge (\diff\theta \otimes \tau_2 - \sin\theta\, \diff\varphi \otimes \tau_1)
\end{align*}
and the Yang-Mills operator becomes
\begin{align}
    \nonumber
    \diff_\omega^\ast F_\omega =
    & \; \frac{2}{r^2}(v'w  - vw') \,\diff s \otimes \tau_3\\[0.2cm]
    \nonumber
    &- \frac{1}{\alpha^2}\left( 
    w'' - \frac{\alpha'}{\alpha}w' + \frac{\alpha^2}{r^2}w(1-v^2-w^2) \right)(\diff\theta \otimes \tau_1 + \sin\theta\, \diff\varphi \otimes \tau_2)\\[0.2cm]
    \label{eq-ym-operator-ansatz}
    &- \frac{1}{\alpha^2}\left( 
    v'' - \frac{\alpha'}{\alpha}v' + \frac{\alpha^2}{r^2}v(1-v^2-w^2) \right)(\diff\theta \otimes \tau_2 - \sin\theta\, \diff\varphi \otimes \tau_1).
\end{align}

\subsection{Spin structure}
\label{subsec-spin-structure}

In view of \S \ref{sec-invariant-connections} and \eqref{eq-levi-civita-form}, we see that the spin Levi-Civita connection fits into the spherically symmetric framework if and only if $m=1$.
In particular, this suggests that we should utilize the principal $\SU(2) \cong \Spin(3)$-bundle $P_1$ to define the spin structure on the base manifold $M$.

To this end, we note that we can trivialize the tangent bundle $TM \cong TN \oplus T\Sph^2$ via the map 
\begin{equation*}
    S = S_{(s,kH)} : \su(2) \cong\R^3 \to T_{(s,kH)} M
\end{equation*}
given by (with respect to standard spherical coordinates $(\theta,\varphi)$)
\begin{align*}
    \sin\theta\cos\varphi \, \tau_1 + \sin\theta\sin\varphi \, \tau_2 + \cos\theta \, \tau_3 &\mapsto e_s = \frac{1}{\alpha(s)} \partial_s,
    \\
    \cos\theta\cos\varphi \, \tau_1 + \cos\theta\sin\varphi \, \tau_2 - \sin\theta \, \tau_3 &\mapsto e_\theta = \frac{1}{r(s)} \partial_\theta,
    \\
    -\sin\varphi \, \tau_1 + \cos\varphi \, \tau_2 &\mapsto e_\varphi = \frac{1}{r(s)\sin\theta} \partial_\varphi,
\end{align*}
and extended linearly.
Here, $\tau_1$, $\tau_2$, $\tau_3$ are identified with the standard unit Cartesian vectors, so that the vectors on the left-hand side correspond to the unit vectors in the standard spherical coordinates on $\R^3$.
The mapping $S$ thus identifies the spherical frame elements $e_s, e_\theta, e_\varphi$ with the standard unit vectors or $\R^3$ in spherical coordinates, which makes the identification rather natural and in particular independent of the coordinates $(\theta,\varphi)$.
The map $S$ defines a pointwise isometry $\su(2) \to T_{(s,kH)} M$ and thus induces also a global trivialization of the orthonormal frame bundle $\SO(TM)$.

By composing $E$ with the double covering $\Ad : \SU(2) \to \SO(3) \cong \SO(\su(2))$ and the global trivialization \eqref{eq-global-trivialization-P} of $P_1$, we can then define the spin structure on $M$ by setting $\Spin(TM)=P_1$ and endowing it with the equivariant double covering
\begin{equation*}
    \mathscr{S} : \Spin(TM) \to \SO(TM), \qquad [s, k, g] \mapsto ((s,kH), S_{(s,kH)} \circ \Ad_{kg}).
\end{equation*}

The natural spherical section%
\footnote{In principle, $\epsilon$ is given by the same formula as the section $p$ from \S \ref{sec-invariant-connections}, but we prefer to use different notations for the two since they should be considered as sections of separate bundles, and one could do gauge transformations on each of them separately. E.g.\ changing the orthonormal frame of the manifold would change the spin frame $\varepsilon$, but not the section $p$ of the coefficient bundle.}
$\varepsilon : (s,\theta,\varphi) \mapsto[s,\sigma(\theta,\varphi),e]$ induced by the Hopf section \eqref{eq-hopf-section} then defines a spin frame on $M$, which induces also an orthonormal frame $\mathscr{S} \circ \varepsilon$ for the tangent bundle. In fact, from the identities (\ref{eq-Ad-cartesian-spherical}), we see that $\mathscr{S} \circ \varepsilon$ is given explicitly by the (ordered) spherical frame $(e_\theta, e_\varphi, e_s)$.

If we let $\chi$ denote the spinor representation (i.e.\ the fundamental representation of $\SU(2)$), then the spinor bundle is the associated vector bundle $\Sigma M = \Spin(TM) \times_\chi \C^2$. A spinor is then a section $\Psi \in \Gamma(\Sigma M)$, and Clifford multiplication between vector fields and spinors is given by
\begin{equation*}
    X \cdot \Psi = [\epsilon, \psi] = [\epsilon,\, X^i \gamma_i \psi],
\end{equation*}
where $X^i$ are the components of $X$ with respect to the ordered frame $\mathscr{S}\circ\varepsilon$, and $\gamma_i$ are the matrices \eqref{eq-clifford-basis} representing the Clifford algebra.
In particular, if we use the spherical spin frame $\varepsilon : (s,\theta,\varphi) \mapsto[s,\sigma(\theta,\varphi),e]$ induced by the Hopf section, then 
\begin{equation*}
    e_\theta \cdot \Psi = [\varepsilon, \, \gamma_1\psi], \quad
    e_\varphi \cdot \Psi = [\varepsilon, \, \gamma_2\psi], \quad
    e_s \cdot \Psi = [\varepsilon, \, \gamma_3\psi].
\end{equation*}

\subsection{Invariant twisted spinors}
\label{subsec-twisted-spinors}

Next, we determine invariant twisted spinors.
The representations of $\SU(2)$ are well-known to be classified by integers $n\geq 0$,%
\footnote{The classifying integer $n$ should not be confused with the classifying integer $m$ for the principal bundle from the previous sections. At this point we are already fixing the latter to be $m=1$ both for the principal spin bundle and the principal $G$-bundle.}
i.e.\ there exist non-isomorphic irreducible representations $\chi_n : \SU(2) \to \U(V_n)$ with $\dim V_n = n+1$ (see below).
We define the associated bundles $E_n = P_1 \times_{\chi_n} V_n$, and we are interested in finding $\SU(2)$-invariant sections of $\Sigma M \otimes E_n$.
In the classification, $\chi_1$ corresponds to the fundamental representation of $\SU(2)$, which also coincides with the spinor representation $\kappa : \Spin(3) \to \U(\Sigma)$, so that we in fact have $\Sigma M \cong E_1$.
Thus the problem reduces to finding $n$ for which there exist invariant sections of the bundle
\begin{equation*}
    \Sigma M \otimes E_n \cong (P_1\times P_1) \times_{(\chi_1\otimes \chi_n)} (V_1\otimes V_n),
\end{equation*}
where we view $P_1 \times P_1$ as a principal $(\SU(2)\times\SU(2))$-bundle over $M$, and $\chi_1\otimes \chi_n$ is the  representation of $\SU(2) \times \SU(2)$ on $V_1 \otimes V_n$ given by
\begin{equation*}
    (\chi_1\otimes\chi_n)(g,l)(v\otimes w) = \chi_1(g)v \otimes \chi_n(l)w,
\end{equation*}
and extended linearly.
Note that $P_1 \times P_1$ is trivially an $\SU(2)$-symmetric principal bundle, and the associated classifying homomorphism $\U(1) \to \SU(2) \times \SU(2)$ is given simply by $\lambda_1 \times \lambda_1$.
Thus, invariant twisted spinors are determined by elements $\psi \in V_1 \otimes V_n$ satisfying the invariance condition \eqref{eq-section-invariance-condition}, i.e.\
\begin{equation*}
    (\chi_1\otimes\chi_n)(\lambda_1(h),\lambda_1(h))\psi = \psi, \qquad h\in H.
\end{equation*}
Infinitesimally, this means that $\psi \in \ker L_n$, where
\begin{equation}\label{eq-L_n}
    L_n = (\chi_1)_*(\tau_3) \otimes \mathrm{id} + \mathrm{id} \otimes (\chi_n)*(\tau_3).
\end{equation}
The goal is thus to find $n$ for which $\ker L_n$ is non-empty.

To this end, we need an explicit description of the representations $\chi_n$, which we briefly recall.
Set $h = 2i\tau_3$ and $\tau^{\pm} = i\tau_1 \pm \tau_2$, which form a basis of $\sl_2(\C) \cong \su(2) \otimes \C$ with $h$ spanning the Cartan subalgebra of $\sl_2(\C)$. The irreducible representations of $\sl_2(\C)$ when restricted to $\su(2)$ also integrate to yield all the irreducible representations of $\SU(2)$, so it suffices to understand $\sl_2(\C)$.
In fact, for each integer $n\geq 0$, there is a representation $\chi_n: \SL_2(\C) \to \GL(V_n)$ of dimension $n+1$ such that there is a basis $\v_0, \dots, \v_n$ of $V_n$ satisfying
\begin{itemize}
    \item $(\chi_n)_*(h)(\v_k) = (n-2k)\v_k$,
    \item $(\chi_n)_*(\tau^+)(\v_n) = (\chi_n)_*(\tau_-)(\v_0) = 0$,
    \item $(\chi_n)_*(\tau^+)(\v_k) = \v_{k+1}$ for $k < n$
    \item $(\chi_n)_*(\tau^-)(\v_k) = k(n-k+1)\v_{k-1}$ for $k > 0$,
\end{itemize}
and any irreducible representation of $\sl_2(\C)$ of dimension $n+1$ is isomorphic to $\chi_n$, see e.g.\ \cite{KnappLieGroups}*{\S I.9.}.
Here, the induced representation of $\SU(2)$ corresponds to the fundamental representation for $n=1$.
Defining a Hermitian metric on $V_n$ by
\begin{equation*}
    \langle \v_k , \v_\ell\rangle := (k!)^2\binom{n}{k}\delta_{k\ell} = \, \frac{n!k!}{(n-k)!} \delta_{k\ell}
\end{equation*}
makes the induced representation $\chi_n$ of $\SU(2)$ unitary and an orthonormal basis is given by $\u_k = \frac{1}{k!}\binom{n}{k}^{-\frac{1}{2}}\v_k = \sqrt{\frac{(n-k)!}{n!k!}}\,\v_k$.
We have $\tau_3 = -\frac{i}{2}h$, $\tau_1 = -\frac{i}{2}(\tau^+ + \tau^-)$ and $\tau_2 = \frac{1}{2}(\tau^+ - \tau^-)$, so that explicitly
\begin{align*}
    (\chi_n)_*(\tau_3)\u_k &= -\frac{i}{2}(n-2k)\u_k\\
    (\chi_n)_*(\tau_1)\u_k &= -\frac{i}{2}\left(\sqrt{k(n-k+1)}\u_{k-1} + \sqrt{(k+1)(n-k)}\u_{k+1}\right)\\
    (\chi_n)_*(\tau_2)\u_{k} &= \frac{1}{2}\left(-\sqrt{k(n-k+1)}\u_{k-1} + \sqrt{(k+1)(n-k)}\u_{k+1}\right).
\end{align*}

Now we can finally determine the kernel of $L_n$, cf.\ \eqref{eq-L_n}.
For easier distinction between the factors, we denote the orthonormal basis corresponding to $V_n$ by $\u_k^{(n)}$.
Then any $\psi \in V_1 \otimes V_n$ can be written as
\begin{equation*}
    \psi = \sum_{j=0}^1 \sum_{k=0}^n \psi_{jk} \, \u^{(1)}_j \otimes \u^{(n)}_k,
\end{equation*}
and a simple calculation shows that $\psi \in \ker L_n$ if and only if
\begin{equation*}
    \psi_{jk} (1-2j+n-2k) = 0,
\end{equation*}
for all $j,k$. Thus $\psi_{jk}=0$ unless $1-2j+n-2k=0$, and the latter is satisfied if and only if $n$ is odd and
\begin{equation*}
    (j,k) = \left(0, \,\frac{n+1}{2}\right) \quad\text{or}\quad (j,k) = \left(1, \,\frac{n-1}{2}\right).
\end{equation*}
It follows that if we denote
\begin{equation*}
    \e_1 = \u_0^{(1)}, \qquad \e_2 = \u_1^{(1)}, \qquad 
    \w_1 = \u^{(n)}_{\frac{n-1}{2}}, \qquad \w_2 = \u^{(n)}_{\frac{n+1}{2}},
\end{equation*}
for odd $n$,
then the invariant subspace of $V_1 \otimes V_n$ is spanned by $\e_1\otimes \w_2$ and $\e_2\otimes \w_1$, whereas the invariant subspace is empty for even $n$.

Thus, we see that invariant twisted spinors $\Psi = \Psi^{(n)} \in \Gamma(\Sigma M \otimes E_n)$ for odd $n$ are of the form
\begin{equation}\label{eq-spinor-ansatz}
    \Psi = \Psi^{(n)} = \left[\epsilon \times p, \, \psi_1(s) \,\e_1 \otimes \w_2 + \psi_2(s) \, \e_2 \otimes \w_1 \right].
\end{equation}
Note that gauge transformations of the form $p\mapsto p\exp\left(-f(s)\tau_3\right)$ employed in \S \ref{sec-invariant-connections} do not change the overall form of the spinor ansatz, but only send
\begin{equation*}
    \psi_1 \mapsto e^{-\frac{if}{2}} \psi_1, \qquad \psi_2 \mapsto e^{\frac{if}{2}}\psi_2,
\end{equation*}
and thus we do not lose generality by making such transformations also in the context of invariant twisted spinors.

\begin{remark}
    For $n=1$, one may be inclined to view $\Psi$ as a spinor-valued spinor, since both factors are sections of essentially the same bundle.
    While this is perfectly reasonable from a topological perspective, it is less so reasonable from a geometric one, since the factors are equipped with different connections.
    Indeed, the first factor comes equipped with the connection associated to the spin Levi-Civita connection (\ref{eq-levi-civita-form}), while the second factor is endowed with the connection associated to a connection $\omega$ that solves the Yang-Mills equation (with Dirac current). Therefore it is more appropriate to only view the first factor as a spinor in this case.
\end{remark}

\begin{table}
    \centering
    \renewcommand{\arraystretch}{2}
    \begin{tabular}{|c||c|c|c|}
        \hline
        $(\chi_n)_*(X)w$  & $X=\tau_1$ & $X=\tau_2$ & $X=\tau_3$ \\\hline\hline
        $w=\w_1$ & $-\frac{i}{2}\left(\lambda\w_{2} + \mu \u_{\frac{n-3}{2}}\right)$ & $\frac{1}{2}\left(\lambda\w_2 - \mu\u_{\frac{n-3}{2}}\right)$ & $-\frac{i}{2}\w_1$  \\\hline
        $w=\w_2$ & $-\frac{i}{2}\left(\lambda\w_1 + \mu\u_{\frac{n+3}{2}}\right)$ & $\frac{1}{2}\left(-\lambda\w_1 + \mu\u_{\frac{n+3}{2}}\right)$ & $\frac{i}{2} \w_2$  \\\hline
    \end{tabular}
    \vskip0.5cm
    \caption{The representation $(\chi_n)_*$. Here, $\lambda = \frac{n+1}{2}$ and $\mu = \frac{\sqrt{(n-1)(n+3)}}{2}$.}
    \label{tab:invariant-actions}
\end{table}

Next, we calculate the current \eqref{eq-dym-current}.
We recall that we are using the representation $\gamma_j = -i\sigma_j$ for the Clifford algebra, and correspondingly Clifford multiplication by $e_j$ acts on $\Psi$ as multiplication by $\gamma_j$ on the first factor (in a section). 
From Table \ref{tab:invariant-actions} we then compute the current in the section $\epsilon \times p$ to be
\begin{align}
    \nonumber
    \mathfrak{J}[\Psi] &= \langle e_j \cdot \Psi, \chi_*(\tau_k)\Psi\rangle \, e^j \otimes \tau_k\\[0.2cm]
    \nonumber
    &= \Big\langle \psi_1 \,(\gamma_j\e_1) \otimes \w_2 + \psi_2 \, (\gamma_j \e_2) \otimes \w_2,\; \psi_1\, \e_1 \otimes (\chi_*(\tau_k)\w_2) + \psi_2\, \e_2 \otimes (\chi_*(\tau_k)\w_1) \Big\rangle \, e^j \otimes \tau_k \\[0.2cm]
    %
    \nonumber
    &=-\frac12 \alpha(|\psi_1|^2 + |\psi_2|^2) \, \diff s \otimes \tau_3 \\[0.2cm]
    \label{eq-current-ansatz}
    &\quad + \lambda r\Re(\psi_1\conj{\psi}_2)(\diff \theta \otimes\tau_1 + \sin \theta \diff \varphi \otimes \tau_2)  + \lambda r\Im(\psi_1\conj{\psi}_2)(\diff \theta \otimes \tau_2- \sin\theta \diff \varphi \otimes \tau_1). 
\end{align}

For the Dirac operator we first calculate the derivatives of $\Psi$ with respect to the connection on $\Sigma M \otimes E_n$ induced by the spin Levi-Civita connection and $\omega$.
In the section $\epsilon \times p$, they work out to be
\begin{align*}
    (\nabla_\omega \Psi)(e_\theta) &= -\frac{1}{2r}\left(\frac{r'}{\alpha}\psi_2 + \lambda(v+iw)\psi_1 \right)\e_1\otimes \w_1 + \frac{1}{2r}\left(\frac{r'}{\alpha}\psi_1 + \lambda(v-iw)\psi_2\right)\e_2 \otimes \w_2\\[0.1cm]
    &\quad + \frac{\mu}{2r}(v-iw)\psi_1 \, \e_1\otimes \u_{\frac{n+3}{2}} - \frac{\mu}{2r}(v+iw)\psi_2 \, \e_2 \otimes \u_{\frac{n-3}{2}},\\[0.2cm]
    (\nabla_\omega \Psi)(e_\varphi) &= \frac{i}{2r}\left(\frac{r'}{\alpha}\psi_2 + \lambda(v+iw)\psi_1\right)\e_1 \otimes \w_1 + \frac{i}{2r}\left(\frac{r'}{\alpha}\psi_1 + \lambda(v-iw)\psi_2\right)\e_2\otimes\w_2\\[0.1cm]
    &\quad +\frac{\mu}{2r}(iv+w)\psi_1 \, \e_1 \otimes \u_{\frac{n+3}{2}} + \frac{\mu}{2r}(iv - w)\psi_2 \, \e_2 \otimes \u_{\frac{n-3}{2}},\\[0.2cm]
    (\nabla_\omega \Psi)(e_s) &= \frac{\psi'_1}{\alpha} \, \e_1\otimes \w_2 + \frac{\psi'_2}{\alpha} \, \e_2\otimes \w_1.
\end{align*}
The twisted Dirac operator on invariant twisted spinors is then
\begin{align}
    \nonumber
    \dirac_\omega\Psi 
    =&\,
    \gamma^k\, (\nabla_\omega \Psi)(e_k)
    \\[0.2cm]
    \nonumber
    =&\, -\frac{i}{\alpha}\left( \psi_1' + \frac{r'}{r}\psi_1 + \frac{\alpha}{r}\,\lambda(v-iw)\psi_2 \right) \e_1 \otimes \w_2\\[0.1cm]
    \label{eq-dirac-ansatz}
    &\, + \frac{i}{\alpha}\left( \psi_2' + \frac{r'}{r}\psi_2 + \frac{\alpha}{r}\,\lambda(v+iw)\psi_1 \right) \e_2 \otimes \w_1,
\end{align}
with respect to the section $\epsilon \times p$.

\section{Dirac-Yang-Mills equations}

\subsection{Summary of the ansatz}
We are now ready to write down the Dirac-Yang-Mills system \eqref{eq-dym} in spherical symmetry.
For the convenience of the reader, we briefly summarize the ansatz from the previous section.
We work with three-dimensional Riemannian manifolds $(M,g)$ of the form
\begin{equation*}
    M = N \times \Sph^2, \qquad g = \alpha(s)^2\diff s^2 + r(s)^2 g_{\Sph^2},
\end{equation*}
where $N$ is a one-dimensional manifold and $\alpha, r : N \to \R$ are positive functions. 

We set $\Spin(TM) = P_1$ and $P = P_1$ so that we get a total principal bundle 
\begin{equation*}
    \SU(2) \times \SU(2) \to \Spin(TM) \times P \to M.
\end{equation*}
The Hopf section \eqref{eq-hopf-section} with respect to standard spherical coordinates induces natural sections $\epsilon$ of $\Spin(TM)$ and $p$ of $P$.
The spin frame $\epsilon$ is then related to the natural spherical frame $(e_\theta, e_\varphi, e_s)$, cf.\ \eqref{eq-sph-frame}.

The total principal bundle $\Spin(TM) \times P$ is equipped with the connection 
\begin{equation*}
    \omega_{\mathrm{spin}} \oplus \omega \in \Omega^1(\Spin(TM) \times P, \, \su(2)\oplus\su(2)),    
\end{equation*}
where $\omega_{\mathrm{spin}}$ is the spin Levi-Civita connection, and $\omega$ is a spherically symmetric connection on $P$.
With respect to the section $\epsilon \times p$, we have $(\epsilon\times p)^*(\omega_{\mathrm{spin}} \oplus \omega) = \epsilon^*\omega_{\mathrm{spin}} \oplus p^*\omega$, where
\begin{align*}
    \epsilon^*\omega_{\mathrm{spin}} &= \frac{r'}{\alpha} \left(\diff\theta \otimes \tau_2 - \sin\theta\,\diff\varphi \otimes \tau_1\right)  + \cos\theta \, \diff\varphi \otimes \tau_3\\[0.1cm]
    p^*\omega &= v(s)(\diff\theta \otimes \tau_2 - \sin\theta\, \diff\varphi \otimes \tau_1)
    + w(s)(\diff\theta \otimes \tau_1 + \sin\theta\, \diff\varphi \otimes \tau_2) + \cos\theta \,\diff\varphi \otimes \tau_3,
\end{align*}
for two real-valued unknown functions $w,v : N \to \R$.

Twisted spinors $\Psi \in \Gamma(\Sigma M \otimes E_n)$ are sections of the associated bundle
\begin{equation*}
    \Sigma M \otimes E_n = (\Spin(TM) \times P) \times_{\chi_1 \otimes \chi_n} (V_1 \otimes V_n),
\end{equation*}
where $\chi_n : \SU(2) \to \U(V_n)$ is the $n$-representation of $\SU(2)$.
With respect to the section $\epsilon\times p$, spherically symmetric twisted spinors are given (for odd $n$) by
\begin{equation*}
    \Psi = \Psi^{(n)} = \left[\epsilon \times p, \, \psi_1(s) \,\e_1 \otimes \w_2 + \psi_2(s) \, \e_2 \otimes \w_1 \right],
\end{equation*}
where $\e_i \in V_1$, $\w_j \in V_n$ are appropriately chosen vectors, and $\psi_1,\psi_2 : N \to \C$ are complex-valued unknown functions.

\subsection{The ODE system}
\label{subsec-dym-ode-system}

Using (\ref{eq-ym-operator-ansatz}, \ref{eq-current-ansatz}, \ref{eq-dirac-ansatz}), we see that $(\omega, \Psi)$ forms a Dirac-Yang-Mills pair on $(M,g)$ if and only if
\begin{subequations}
\begin{empheq}[left=\empheqlbrace]{align}
    \label{eq-constraint-vw}
    0&= v'w-vw'+\frac{\alpha r^2}{4}(|\psi_1|^2 + |\psi_2|^2),
    \\[0.1cm]
    \label{eq-ym-w}
    0&= w'' - \frac{\alpha'}{\alpha}w' + \frac{\alpha^2}{r^2}w(1-v^2-w^2) + \lambda \alpha^2 r \,\Re(\psi_1\conj{\psi}_2),
    \\[0.1cm]
    \label{eq-ym-v}
    0&= v'' - \frac{\alpha'}{\alpha}v' + \frac{\alpha^2}{r^2}v(1-v^2-w^2) + \lambda \alpha^2 r \,\Im(\psi_1\conj{\psi}_2),
    \\[0.1cm]
    \label{eq-psi-1}
    0&= \psi_1' + \frac{r'}{r}\psi_1 + \frac{\alpha}{r}\,\lambda (v-iw)\psi_2,
    \\[0.1cm]
    \label{eq-psi-2}
    0&= \psi_2' + \frac{r'}{r}\psi_2 + \frac{\alpha}{r}\,\lambda(v+iw)\psi_1,
\end{empheq}
\end{subequations}
where $\lambda = \frac{n+1}{2}$ for a positive odd integer $n$.

We choose the log-polar coordinate $s$ so that $\alpha = r$ (cf.\ Remark \ref{remark-s-coordinate}), which simplifies the appearance of the system considerably.
It is also more convenient to write down the sytem by defining the complex/quaternionic variables (cf.\ also Footnote \footref{foot-quaternions} on p.\ \pageref{foot-quaternions})
\begin{equation*}
    z = w+iv, \qquad Y = \frac{z'}{r}, \qquad \xi = r(\psi_1+j\conj{\psi}_2) =: \QuatRe(\xi) + j\QuatIm(\xi).
\end{equation*}
Note that then one can write
\begin{equation*}
    r^2\psi_1\conj{\psi}_2 = \QuatRe(\xi) \QuatIm(\xi) = \frac{1}{4}(\xi - i\xi i)j( \conj{\xi} + i\conj{\xi} i),
\end{equation*}
and the equations become
\begin{subequations}
    \begin{empheq}[left=\empheqlbrace]{align}
        \label{eq-constraint}
        0&= \Im(\conj{z}Y) + \frac14 |\xi|^2,\\
        \label{eq-z}
        0&= z' - r Y,\\
        \label{eq-Y}
        0&= Y' + \tfrac{1}{r} z(1-|z|^2) + \lambda \QuatRe(\xi) \QuatIm(\xi),\\
        \label{eq-xi}
        0&= \xi' +  i\lambda \xi \conj{z} j.
    \end{empheq}
\end{subequations}
For the geometric setting, $\lambda$ is a positive integer since $n\geq0$ is odd, although in principle the analysis carries through if $\lambda \geq 1$ is real.

\subsection{Invariances, constraints, and initial data}
\label{subsec-initial-invariances}

Note that the equations (\ref{eq-constraint}--\ref{eq-xi}) have:
\begin{itemize}
    \item the conjugate transposition symmetry
    \begin{equation*}
        (\psi_1,\conj{\psi}_2) \mapsto (\conj{\psi}_2, \psi_1), \quad\text{i.e.}\quad 
        (\QuatRe(\xi), \QuatIm(\xi)) \mapsto (\QuatIm(\xi), \QuatRe(\xi)),
    \end{equation*}
    \item the $\U(1)$-symmetry
    \begin{equation}\label{eq-u(1)-transf}
        (z,Y,\xi) \mapsto (e^{iT}z, e^{iT}Y, \xi e^{\frac{iT}{2}}) ,
    \end{equation}
    for any fixed $T\in\R$.
    Zooming out, this is just a consequence of the gauge invariance of the theory and a gauge transformation by $\exp(T\tau_3)$.
\end{itemize}

\begin{remark}
    In pure Yang-Mills theory (i.e.\ when $\xi=0$), the constraint equation \eqref{eq-constraint} implies that $w=\Re(z)$ and $v=\Im(z)$ are constant multiples of one another, and then one can use the $\U(1)$-invariance to set either $v=0$ or $w=0$. This is often taken for granted in $\SU(2)$ Yang-Mills literature, but the Dirac-Yang-Mills setting has the interesting feature that one really needs both $v$ and $w$ to not be identically zero if one wants to have non-trivial solutions.
\end{remark}

Next, we observe the following.

\begin{lemma}
    If the differential equations {\normalfont (\ref{eq-z}--\ref{eq-xi})} are satisfied and {\normalfont \eqref{eq-constraint}} holds at one point, then {\normalfont \eqref{eq-constraint}} holds everywhere.
\end{lemma}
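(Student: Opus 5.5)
The plan is to show that the constraint quantity
\begin{equation*}
    \mathcal{C} := \Im(\conj{z}Y) + \tfrac14 |\xi|^2
\end{equation*}
has vanishing derivative along any solution of (\ref{eq-z}--\ref{eq-xi}). Then $\mathcal{C}$ is constant on the connected manifold $N$ (or on the interval of definition). So if it vanishes at one point, it vanishes everywhere. No Gr\"onwall-type argument should be needed, since I expect $\mathcal{C}'\equiv 0$ exactly rather than merely $\mathcal{C}'$ being proportional to $\mathcal{C}$.

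First I would treat the Yang--Mills part. Differentiating gives $(\Im(\conj{z}Y))' = \Im(\conj{z}'Y) + \Im(\conj{z}Y')$. By \eqref{eq-z}, $\conj{z}'Y = r|Y|^2$ is real, so the first term drops out. By \eqref{eq-Y},
\begin{equation*}
    \conj{z}Y' = -\tfrac1r|z|^2(1-|z|^2) - \lambda\,\conj{z}\,\QuatRe(\xi)\QuatIm(\xi).
\end{equation*}
The first summand is real, so $(\Im(\conj{z}Y))' = -\lambda \Im\big(\conj{z}\,\QuatRe(\xi)\QuatIm(\xi)\big)$.

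Next comes the spinor part. Write $\xi = a + jb$ with $a,b\in\C$, so that $\conj{\xi} = \conj{a} - jb$ and $|\xi|^2 = \xi\conj{\xi}$. Then $(|\xi|^2)' = 2\Re(\xi'\conj{\xi})$, where $\Re$ denotes the real part of a quaternion. Inserting \eqref{eq-xi} gives $\xi'\conj{\xi} = -i\lambda\,\xi\conj{z}j\conj{\xi}$. I would then expand this product explicitly using the rules $jw = \conj{w}j$ for $w\in\C$ and $j^2=-1$. Alternatively, I would use $\Re(pq)=\Re(qp)$ to cyclically rearrange the product first, which should shorten the bookkeeping. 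The goal is the identity
\begin{equation*}
    \Re(\xi'\conj{\xi}) = 2\lambda\,\Im(\conj{z}ab) = 2\lambda\Im\big(\conj{z}\,\QuatRe(\xi)\QuatIm(\xi)\big).
\end{equation*}
With this identity, $\tfrac14(|\xi|^2)'$ exactly cancels the term found in the previous step, and hence $\mathcal{C}'=0$.

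The main obstacle is this quaternionic computation. One has to get the sign conventions right: the order of multiplication in $\xi\conj{z}j$, the placement of $j$ in $\xi = \QuatRe(\xi) + j\QuatIm(\xi)$, and the conjugation of $jb$. As a cross-check, I would redo the computation in the original variables $\psi_1,\psi_2$ with $\alpha=r$ using \eqref{eq-psi-1}--\eqref{eq-psi-2}. There one finds
\begin{equation*}
    (r^2|\psi_1|^2+r^2|\psi_2|^2)' = -2\lambda r\,\Re\big((v-iw)\psi_2\conj{\psi}_1 + (v+iw)\psi_1\conj{\psi}_2\big),
\end{equation*}
where the $r'/r$ terms cancel against the derivative of $r^2$. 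I would verify that this equals $4\lambda\Im(\conj{z}\,r^2\psi_1\conj{\psi}_2)$ with $z=w+iv$, matching the $v,w$ terms of the Yang--Mills equations \eqref{eq-ym-w}--\eqref{eq-ym-v}. This two-way check guards against a sign error.
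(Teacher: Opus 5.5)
Your proposal is correct and follows the paper's proof: the paper likewise differentiates $\conj{z}Y$, with $r|Y|^2$ and $\frac1r|z|^2(|z|^2-1)$ real. It then computes $\frac{\diff}{\diff s}|\xi|^2 = 4\lambda\Im(\conj{z}\,\QuatRe(\xi)\QuatIm(\xi))$, so the two contributions cancel and the constraint quantity is constant. One small slip in your optional cross-check: with $\alpha = r$ one has $(r\psi_1)' = -\lambda r(v-iw)\psi_2$, so the prefactor should be $-2\lambda r^2$ rather than $-2\lambda r$, and this is what makes it agree with $4\lambda\Im(\conj{z}\,r^2\psi_1\conj{\psi}_2)$.
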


\begin{proof}
    A calculation gives
    \begin{equation*}
        \frac{\diff}{\diff s}(\conj{z}Y)
        =
        r|Y|^2 + \frac{1}{r}|z|^2(|z|^2-1) - \lambda \conj{z} \QuatRe(\xi) \QuatIm(\xi),
    \end{equation*}
    and
    \begin{equation*}
        \frac{\diff}{\diff s}|\xi|^2 
        = -\lambda jz \conj{\xi} i\xi - \lambda \conj{\xi} i \xi \conj{z}j 
        = 4\lambda \Im(\conj{z}  \QuatRe(\xi) \QuatIm(\xi)).
    \end{equation*}
    Thus the function $\Im(\conj{z}Y)+\frac14|\xi|^2$ has vanishing derivative and is consequently constantly equal to zero by assumption.
\end{proof}

Therefore we can view \eqref{eq-constraint} as an initial data constraint.
In particular we are interested in solving (\ref{eq-z}--\ref{eq-xi}) with initial data in the space
\begin{equation*}
    \{(z_0, Y_0, \xi_0) \in \C \times \C \times \Quat \, \mid \, \Im(\conj{z}_0 Y_0) + \tfrac14|\xi_0|^2 = 0 \}.
\end{equation*}

Using the $\U(1)$-invariance of the equations, we can without loss of generality assume that $\Im (z_0) = 0$ and $\Re (z_0) > 0$. Then the constraint reduces to $\rho_0 \Im(Y_0) + \tfrac14|\xi_0|^2 = 0$ which fixes the imaginary part of $Y_0$, whereas the real part $\Re(Y_0) =: U_0$ can be chosen freely.
Using the invariance $(\QuatRe(\xi), \QuatIm(\xi))\mapsto(\QuatIm(\xi), \QuatRe(\xi))$, we can without loss of generality assume $|\QuatRe(\xi_0)| \geq |\QuatIm(\xi_0)|$.
Finally, using the $\U(1)$-invariance once again, we can assume without loss of generality that $|\arg\QuatRe(\xi_0) + \arg\QuatIm(\xi_0)| < \pi$.
Thus we can write general initial data as
\begin{equation*}
    z(0) = \rho_0, \quad Y(0) = U_0 - \frac{i|\xi_0|^2}{4\rho_0}, \quad \xi(0) = \xi_0,
\end{equation*}
where the initial parameters $(\rho_0, U_0, \xi_0)$ belong to the set
\begin{equation*}
    \mathscr{I}_0 = \left\{(\rho_0, U_0, \xi_0) \in (0,\infty)\times \R \times \Quat, \;\mid\;  |\QuatRe(\xi_0)| \geq |\QuatIm(\xi_0)|, \; -\pi < \arg\QuatRe(\xi_0) + \arg\QuatIm(\xi_0) \leq \pi \right\}.
\end{equation*}
We note that the case $\rho_0 = 0$ is excluded here since then $\xi_0 = 0$ and the theory degenerates to a pure Yang-Mills theory. 

Finally, we would like to note that we could assume without loss of generality that $|\xi_0|=1$ in view of the scale-invariance
\begin{equation*}
    Y \mapsto a^2Y, \qquad \xi \mapsto a\xi, \qquad r \mapsto r/a^2, \qquad a\in\R.
\end{equation*}
However, this clearly requires the rescaling of the metric coefficient $r$, which should be given as a fixed external parameter, so we do not exploit this scale-invariance to avoid confusion.

\subsection{Polar form}
\label{subsec-polar}

The appearance of the norms $|z|$ and $|\xi|$ in the equations (\ref{eq-constraint}--\ref{eq-xi}) suggests that it may be beneficial to write the complex variables in polar form. 
To this end, we let
\begin{equation*}
    z = w+iv = \rho e^{iZ}, \qquad \QuatRe(\xi) = r\psi_1 = R_1 e^{iX_1}, \qquad \overline{\QuatIm(\xi)} = r\psi_2 = R_2 e^{iX_2}.
\end{equation*}
As noted in \S \ref{subsec-initial-invariances}, we can assume $z_0 \not= 0$ so that the polar form for $z$ is well-defined near the initial data.
On the other hand, there exist distinguished solutions for which one of $\QuatRe(\xi)$ or $\QuatIm(\xi)$ vanishes at a point, while the other is non-zero.%
\footnote{We do not consider such solutions in this paper.}
Since we are assuming $|\QuatRe(\xi_0)| \geq |\QuatIm(\xi_0)|$, it suffices to assume that $\QuatIm(\xi_0) \not=0$ for the polar form of $\xi$ to be well-defined locally near the initial data.
In terms of the initial data $(\rho_0, U_0, \xi_0) \in \mathscr{I}_0$ from \S \ref{subsec-initial-invariances}, the initial data for the polar variables are then
\begin{equation*}
    \begin{array}{lll}
        \rho(0) = \rho_0, & R_1(0) = |\QuatRe(\xi_0)|, & R_2(0) = |\QuatIm(\xi_0)|,\\[0.2cm]
        Z(0)=0, & X_1(0) = \arg\QuatRe(\xi_0), & X_2(0) = -\arg\QuatIm(\xi_0).
    \end{array}
\end{equation*}

In terms of the polar variables, we can express
\begin{equation*}
    Y = \frac{z'}{r} = \left(\frac{\rho'}{\rho} + iZ'\right)\frac{z}{r},
\end{equation*}
so that the constraint equation \eqref{eq-constraint} reads
\begin{equation}
\label{eq-Z-general}
    \frac{\rho^2}{r} Z' + \frac{1}{4}(R_1^2 + R_2^2) = 0,
    \quad\text{i.e.}\quad
    Z(s) = -\frac14 \int_0^s \frac{r(R_1^2 + R_2^2)}{\rho^2},
\end{equation}
since we are assuming $Z_0 = 0$.

The Yang-Mills equation \eqref{eq-Y} is
\begin{equation*}
    0 = \left[ \frac{\rho''}{\rho}-\frac{r'\rho'}{r\rho} + 1 - \rho^2  - (Z')^2 + i \left(Z'' + \left(\frac{2\rho'}{\rho} - \frac{r'}{r}\right)Z'  \right)\right]\frac{\rho}{r} e^{iZ} + \lambda R_1R_2e^{i(X_1-X_2)},
\end{equation*}
or equivalently
\begin{align}
    \nonumber
    \lambda e^{i(X_1-X_2-Z)} &= -\frac{\rho}{rR_1R_2} \left[ \frac{\rho''}{\rho}-\frac{r'\rho'}{r\rho} + 1 - \rho^2 - (Z')^2 + i \left(Z'' + \left(\frac{2\rho'}{\rho} - \frac{r'}{r}\right)Z'  \right)\right]
    \\[0.2cm]
    \label{eq-ym-polar-general}
    &= -\frac{\rho}{rR_1R_2} \left[ \frac{\rho''}{\rho}-\frac{r'\rho'}{r\rho} + 1 - \rho^2  - \frac{r^2(R_1^2+R_2^2)^2}{16\rho^4} \right] + \frac{i}{2\rho} \left( \frac{R_1'}{R_2} + \frac{R_2'}{R_1} \right),
\end{align}
where we have also inserted \eqref{eq-Z-general} in the last line.
The Dirac equation \eqref{eq-xi} reduces to the system
\begin{equation*}
    \begin{cases}
    0 = \left(\frac{R_1'}{R_1} + iX_1'\right)R_1e^{iX_1} - i\lambda\rho R_2 e^{i(Z+X_2)}\\[0.2cm]
    0 = \left(\frac{R_2'}{R_2} + iX_2'\right)R_2e^{iX_2} + i\lambda\rho R_1 e^{i(X_1-Z)}
    \end{cases}
\end{equation*}
or equivalently
\begin{equation}\label{eq-dirac-polar-general}
    \lambda e^{i(X_1-X_2-Z)} = \frac{R_1}{\rho R_2} \left(X_1' + i\,\frac{R_1'}{R_1}\right) = \frac{R_2}{\rho R_1} \left(- X_2' + i\,\frac{R_2'}{R_2}\right).
\end{equation}
The Dirac-Yang-Mills system thus reduces to (\ref{eq-ym-polar-general}, \ref{eq-dirac-polar-general}), which can equivalently be rewritten as the following set of equalities:
\begin{equation*}
    \begin{cases}
        \lambda \rho \sin(X_1-X_2-Z) = \frac{R_1'}{R_2} = \frac{R_2'}{R_1} = \frac{1}{2}\left(\frac{R_1'}{R_2} + \frac{R_2'}{R_1}\right),
        \\[0.2cm]
        \lambda\rho \cos(X_1-X_2-Z) = \frac{R_1X_1'}{R_2} = -\frac{R_2 X_2'}{R_1} = -\frac{\rho}{rR_1R_2} \left[ \rho'' -\frac{r'}{r}\,\rho' + \rho(1 - \rho^2)\right]  + \frac{r(R_1^2+R_2^2)^2}{16\rho^2 R_1R_2},
    \end{cases}
\end{equation*}
In fact, the last equality in the first line is implied by the other two.

Setting
\begin{equation*}
    W = X_1 - X_2 - Z,
\end{equation*}
we see that
\begin{equation*}
    W' = \frac{R_1^2+R_2^2}{R_1R_2} \, \lambda \rho \cos W + \frac{r(R_1^2+R_2^2)}{4\rho^2}.
\end{equation*}
Setting also $\rho' = rH$ we thus see that the system reduces to
\begin{subequations}
    \begin{empheq}[left=\empheqlbrace]{align}
        \nonumber
        \rho' - rH = 0,
        \\[0.1cm]
        \nonumber
        H' + \frac{1}{r}\rho(1-\rho^2) - \frac{r(R_1^2+R_2^2)^2}{16 \rho^3} + \lambda R_1R_2 \cos W = 0,
        \\[0.1cm]
        \nonumber
        R_1' - \lambda \rho R_2 \sin W = 0,
        \\[0.1cm]
        \nonumber
        R_2' - \lambda \rho R_1\sin W = 0,
        \\[0.1cm]
        \nonumber
        W' - \frac{R_1^2+R_2^2}{R_1R_2} \, \lambda \rho \cos W - \frac{r(R_1^2+R_2^2)}{4\rho^2} = 0.
    \end{empheq}
\end{subequations}
The arguments $X_1$, $X_2$ can then be recovered via
\begin{equation}\label{eq-X_12-sol}
    X_1(s) = X_1(0) + \lambda \int_0^s \frac{R_2}{R_1} \rho \cos W, \qquad 
    X_2(s) = X_2(0) - \lambda \int_0^s \frac{R_1}{R_2} \rho \cos W.
\end{equation}

Now note that the equations for $R_1$ and $R_2$ imply that
\begin{equation*}
    (R_1^2)' = (R_2^2)', \quad\text{i.e.}\quad R_1^2 - R_2^2 \equiv \text{const} = |\QuatRe(\xi_0)|^2 - |\QuatIm(\xi_0)|^2 =: \delta_0^2,
\end{equation*}
so that it in fact suffices to consider only one of the variables $R_1$ and $R_2$.
Then it is natural to consider separately the cases $\delta_0 = 0$ and $\delta_0 > 0$.

\begin{itemize}[itemsep=0.1cm]
    \item The initial datum $\xi_0$ is such that $\delta_0 = 0$ if and only if $\QuatIm(\xi_0) = \overline{\QuatRe(\xi_0)} e^{iW_0}$.
    In this case it is convenient to denote $R := R_1 = R_2$ to simplify the equations.
    Note that then
    \begin{equation*}
        R(0)^2 = |\QuatRe(\xi_0)|^2 = |\QuatIm(\xi_0)|^2 = \frac12 |\xi_0|^2.
    \end{equation*}
    We see in this case also that
    \begin{equation*}
        X_1 + X_2 \equiv \text{const} = \arg\QuatRe(\xi_0) - \arg\QuatIm(\xi_0),
    \end{equation*}
    and in particular if $\arg\QuatRe(\xi_0) = \arg\QuatIm(\xi_0)$, then the solution is such that $\QuatRe(\xi) \equiv \QuatIm(\xi)$.
    \item If $\delta_0 > 0$, we set $R := R_2$, so that $R_1 = \sqrt{\delta_0^2+R^2}$. 
    Here it is convenient to further substitute $R = \delta_0 \sinh \frac{P}{2}$, so that $R_1 = \delta_0 \cosh \frac{P}{2}$.
    Note that then
    \begin{equation*}
        P(0) = 2\arsinh\frac{|\QuatIm(\xi_0)|}{\delta_0} = 2\arcosh\frac{|\QuatRe(\xi_0)|}{\delta_0}.
    \end{equation*}
\end{itemize}

Thus, we have proven the following.

\begin{lemma}[Polar form]
\label{lem-polar-form}
Let $(\rho_0, U_0, \xi_0) \in \mathscr{I}_0$ with $\QuatIm(\xi_0) \not= 0$ and put
\begin{align*}
    W_0 &= \arg\QuatRe(\xi_0) + \arg\QuatIm(\xi_0),\\
    \delta_0 &= \sqrt{|\QuatRe(\xi_0)|^2-|\QuatIm(\xi_0)|^2}.
\end{align*}
Then the Dirac-Yang-Mills pair $(z,\xi)$ with initial parameters $(\rho_0, U_0, \xi_0)$ and with respect to the metric $r^2(\diff s^2 + g_{\Sph^2})$ is given by the following:
\begin{itemize}[itemsep=0.1cm]
    \item If $\delta_0 = 0$, then necessarily $\QuatIm(\xi_0) = \overline{\QuatRe(\xi_0)}e^{iW_0}$, and
    \small
    \begin{align*}
        z(s) &= \rho(s)\exp\left( i\left[2\lambda \int_0^s \rho(t)\cos W(t) \,\diff t - W(s) + W_0 \right] \right),\\
        \xi(s) &= \xi_0 \exp\left( i\lambda \int_0^s \rho(t)e^{-iW(t)} \diff t \right) = \sqrt{2}\,\frac{\xi_0}{|\xi_0|} \,R(s) \exp\left(i\lambda \int_0^s \rho(t)\cos W(t) \diff t\right),
    \end{align*}
    \normalsize
    where $\rho$, $R$, and $W$ are real-valued functions satisfying the system
    \begin{subequations}
        \begin{empheq}[left=\empheqlbrace]{align}
            \label{eq-rho-polar-delta=0}
            \rho' - rH = 0,
            \\[0.1cm]
            \label{eq-H-polar-delta=0}
            H' + \frac{1}{r}\rho(1-\rho^2) - \frac{rR^4}{4\rho^3} + \lambda R^2\cos W = 0, 
            \\[0.1cm]
            \label{eq-R-polar-delta=0}
            R' - \lambda \rho R\sin W = 0, 
            \\[0.1cm]
            \label{eq-W-polar-delta=0}
            W'- 2\lambda\rho\cos W - \frac{rR^2}{2\rho^2} = 0, 
        \end{empheq}
    \end{subequations}
    with initial conditions
    \begin{equation*}
        \begin{array}{ll}
             \rho(0)= \rho_0, & R(0) = \frac{1}{\sqrt{2}}|\xi_0| = |\QuatRe(\xi_0)|, \\[0.2cm]
             H(0)= \rho_0U_0, & W(0) = W_0.
        \end{array}
    \end{equation*}
    \item If $\delta_0 > 0$, then
    \small
    \begin{align*}
        z(s) &= \rho(s)\exp\left( i\left[2\lambda \int_0^s \rho(t)\coth P(t) \cos W(t)\,\diff t - W(s) + W_0 \right] \right),\\
        \QuatRe(\xi)(s) &= \delta_0 \cosh \frac{P(s)}{2} \,\exp\left( i\left[\lambda \int_0^s \rho(t)\tanh \frac{P(t)}{2} \cos W(t) \,\diff t + \arg\QuatRe(\xi_0)\right] \right),\\
        \QuatIm(\xi)(s) &= \delta_0 \sinh \frac{P(s)}{2} \,\exp\left( i\left[\lambda \int_0^s \rho(t) \coth \frac{P(t)}{2} \cos W(t) \,\diff t + \arg\QuatIm(\xi_0)\right] \right),
    \end{align*}
    \normalsize
    where $\rho$, $P$, and $W$ are real-valued functions satisfying the initial value problem
    \begin{subequations}
    \begin{empheq}[left=\empheqlbrace]{align}
        \label{eq-rho-polar-delta>0}
        \rho' - rH = 0,
        \\[0.1cm]
        \label{eq-H-polar-delta>0}
        \frac{1}{\delta_0^2} H' + \frac{1}{\delta_0^2 r}\rho(1-\rho^2) - \frac{\delta_0^2 r\cosh^2 P}{16 \rho^3} + \frac{\lambda}{2} \sinh P \cos W = 0,
        \\[0.1cm]
        \label{eq-P-polar-delta>0}
        P' - 2\lambda \rho \sin W = 0,
        \\[0.1cm]
        \label{eq-W-polar-delta>0}
        W' - 2\lambda \rho \coth P \cos W - \frac{\delta_0^2 r\cosh P}{4\rho^2} = 0,
        \end{empheq}
    \end{subequations}
    with initial conditions
    \begin{equation*}
        \begin{array}{ll}
             \rho(0)= \rho_0, & P(0) = 2\arsinh\frac{|\QuatIm(\xi_0)|}{\delta_0}, \\[0.2cm]
             H(0)= \rho_0U_0, & W(0) = W_0.
        \end{array}
    \end{equation*}
\end{itemize}
\end{lemma}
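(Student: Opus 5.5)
The derivation in \S\ref{subsec-polar} already contains most of the computation, so the proof mainly reassembles it and checks that every step can be reversed. We take $\alpha=r$ and work with the system (\ref{eq-constraint}--\ref{eq-xi}). We substitute the polar ansatz $z=\rho e^{iZ}$, $\QuatRe(\xi)=R_1e^{iX_1}$, $\overline{\QuatIm(\xi)}=R_2e^{iX_2}$; it is well defined near $s=0$ because $\rho_0>0$ and $\QuatIm(\xi_0)\neq0$, which gives $R_1,R_2>0$. The constraint \eqref{eq-constraint} becomes \eqref{eq-Z-general}, which determines $Z$ once $Z(0)=0$. The Yang--Mills equation \eqref{eq-Y} becomes \eqref{eq-ym-polar-general}, and the Dirac equation \eqref{eq-xi} becomes \eqref{eq-dirac-polar-general}. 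Taking real and imaginary parts gives the pair of equalities for $\lambda\rho\sin W$ and $\lambda\rho\cos W$. From these we read off the equations for $R_1,R_2,W$ and the second-order equation for $\rho$, which we rewrite as a first-order system with $\rho'=rH$. Conversely, if $(\rho,H,R_1,R_2,W)$ solve the reduced system and $X_1,X_2,Z$ are defined by \eqref{eq-X_12-sol} and \eqref{eq-Z-general}, the same identities read backwards show that $(z,Y=z'/r,\xi)$ solves (\ref{eq-z}--\ref{eq-xi}) together with the constraint. The Lemma on propagation of \eqref{eq-constraint} guarantees that nothing is lost by imposing the constraint only initially.

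Next we use the conserved quantity $R_1^2-R_2^2=\delta_0^2$ and split into the two cases.

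\textbf{Case $\delta_0=0$.} Set $R=R_1=R_2$. Then $(R_1^2+R_2^2)^2/16=R^4/4$, $(R_1^2+R_2^2)/(R_1R_2)=2$, and $r(R_1^2+R_2^2)/(4\rho^2)=rR^2/(2\rho^2)$, which yields (\ref{eq-rho-polar-delta=0}--\ref{eq-W-polar-delta=0}).

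\textbf{Case $\delta_0>0$.} Substitute $R_2=\delta_0\sinh\frac P2$ and $R_1=\delta_0\cosh\frac P2$. The equation $R_2'=\lambda\rho R_1\sin W$ becomes $\frac{\delta_0}{2}\cosh\frac P2\,P'=\lambda\rho\,\delta_0\cosh\frac P2\sin W$, which is \eqref{eq-P-polar-delta>0}. The hyperbolic identities $R_1^2+R_2^2=\delta_0^2\cosh P$ and $R_1R_2=\frac{\delta_0^2}2\sinh P$ turn the $H$- and $W$-equations into \eqref{eq-H-polar-delta>0} and \eqref{eq-W-polar-delta>0}, after dividing by $\delta_0^2$. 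The initial values follow directly from the choices in \S\ref{subsec-initial-invariances}: $Z(0)=0$, $X_1(0)=\arg\QuatRe(\xi_0)$, $X_2(0)=-\arg\QuatIm(\xi_0)$ (so $W(0)=W_0$), and $H(0)=\rho'(0)/r(0)=\rho_0\Re(Y_0)=\rho_0U_0$, because $Y=(\rho'/\rho+iZ')z/r$.

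To obtain the explicit formulas we reconstruct $Z=X_1-X_2-W$ from \eqref{eq-X_12-sol}. For $\delta_0=0$, $X_1-X_2=W_0+2\lambda\int\rho\cos W$, which gives the stated formula for $z$; also $X_1+X_2$ is constant, so $\xi=\sqrt2\,\xi_0|\xi_0|^{-1}R\,e^{i\lambda\int\rho\cos W}$. The alternative expression $\xi_0\exp(i\lambda\int\rho e^{-iW})$ holds because its modulus is $|\xi_0|\exp(\lambda\int\rho\sin W)=\sqrt2\,R$ by \eqref{eq-R-polar-delta=0}. For $\delta_0>0$, the integrands in \eqref{eq-X_12-sol} become $\tanh\frac P2$ and $\coth\frac P2$, and $\tanh\frac P2+\coth\frac P2=2\coth P$ gives the phase of $z$. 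We must also account for the sign convention $\QuatIm(\xi)=\overline{R_2e^{iX_2}}$, so the phase of $\QuatIm(\xi)$ is $-X_2$.

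The main obstacle is bookkeeping rather than any single idea. The delicate points are the quaternionic ordering in $\xi\conj z j$ when splitting \eqref{eq-xi} into its two complex components, the conjugation convention for $\QuatIm(\xi)$, and the reversibility argument. For the last, we would check directly that the reconstructed $z,\xi$ satisfy (\ref{eq-z}--\ref{eq-xi}), rather than rely on the chain of equivalences.
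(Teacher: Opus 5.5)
Your reduction is the paper's own derivation in \S\ref{subsec-polar}: the polar ansatz, $Z$ from the constraint \eqref{eq-Z-general}, real and imaginary parts of the Yang--Mills and Dirac equations, the conserved quantity $R_1^2-R_2^2=\delta_0^2$, and the hyperbolic substitution. The algebra in both cases is correct, as are the phase reconstructions (including $\tanh\frac P2+\coth\frac P2=2\coth P$ and the conjugation convention for $\QuatIm(\xi)$) and the initial values of $\rho$, $P$, $R$ and $W$.

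One step fails: the initial value of $H$. You assert $\rho'(0)/r(0)=\rho_0\Re(Y_0)$, which is false in general. Since $z(0)=\rho_0$ and $Z(0)=0$,
\begin{equation*}
Y_0=\frac{z'(0)}{r_0}=\frac{\rho'(0)+i\rho_0Z'(0)}{r_0},
\end{equation*}
so $\Re(Y_0)=\rho'(0)/r_0$, that is, $H(0)=U_0$. The factor $\rho_0$ appears only in $\Re(\conj{z}_0Y_0)=\rho_0H(0)$.

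So the initial condition $H(0)=\rho_0U_0$ in the statement is a slip. It holds only when $\rho_0=1$ or $U_0=0$, and the paper itself later uses $\rho_0'=r_0U_0$, which is consistent with $H(0)=U_0$. You should have flagged it and proved the corrected condition $H(0)=U_0$, rather than writing an identity that reproduces the typo.
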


\begin{remark}
    At first glance this may seem like it makes the equations more complicated, but the point of this result is that it "partially solves" the problem, in the sense that we reduce the complex/quaternionic system (\ref{eq-z-intro}--\ref{eq-xi-intro}) to a system of four real-valued equations.
    
    In the case $\delta_0>0$ one can rescale $r \mapsto r/\delta_0^2$ and $H \mapsto \delta_0^2 H$ in order to simplify the equations (i.e.\ this sets $\delta_0=1$ in the equations).
    However, this would make the statement somewhat more confusing, since it changes the metric which should be viewed as a fixed parameter, and also the relation to the original variables changes since $\xi$ depends on $r$ implicitly, cf.\ also the discussion at the end of \S \ref{subsec-initial-invariances}. Therefore we keep $\delta_0$ explicit in the statements.
\end{remark}

\section{Solutions with constant \texorpdfstring{$\rho$}{rho}}
\label{sec-constant-rho}

One of the difficulties with studying the Dirac-Yang-Mills (DYM) system is the fact that it is non-autonomous. Indeed, the metric coefficient $r$ should be viewed as a fixed externally given coefficient. This makes it difficult to study the system in general since, a priori, there are no constraints on how the function $r$ should look like.

In this section we flip the problem by instead fixing $\rho \equiv \rho_0$, which makes the system simple enough to study or even solve explicitly in certain cases. In doing so, we of course lose the additional degree of freedom to choose $r$, so that $r$ will in fact be determined by the other variables. On the other had, this allows us to reinsert the formula for $r$ into the system, which makes it autonomous and in particular allows for a complete analysis.

Throughout the section, we assume that $\rho \equiv \rho_0 > 0$ is constant.
In view of Lemma \ref{lem-polar-form}, we study the cases $\delta_0 = 0$ and $\delta_0 > 0$ separately.

\subsection{\texorpdfstring{The case $\delta_0 = 0$}{The case delta0 = 0}}
\label{subsec-constant-rho-delta0=0}

In this case, we show the following.

\begin{proposition}[The case $\delta_0 = 0$]
    \label{prop-constant-rho-delta=0}
    Using the notation of Proposition \ref{lem-polar-form}, assume that $\delta_0=0$ so that $\xi_0 = \QuatRe(\xi_0)(1+je^{iW_0})$, and $\rho \equiv \rho_0$ for a constant $\rho_0 > 0$, and define $\rho_\crit = (1+8\lambda^2)^{-\frac12}$.
    \smallskip
    \begin{enumerate}
        \item If $\rho_0 > 1$, the DYM pair is singular at some finite $s$.
        \item If $\rho_0 = 1$, then we have $W_0=0$ without loss of generality,%
        \footnote{Here, and throughout the rest of the statement, without loss of generality means up to translations of $s$, reflections $(s,W) \mapsto -(s,W)$, and translations $W \mapsto W+2\pi k$ for $k\in \Z$.}
        and the associated DYM pair is
        \begin{align*}
           z(s) &= \exp\left(-\frac{i}{2}\arctan\sinh(4\lambda s)\right),\\
           \xi(s) &= \xi_0 \cosh^{\frac14}(4\lambda s)  \exp\left(\frac{i}{4}\arctan\sinh(4\lambda s) \right),
        \end{align*}
        and is defined globally on $(\R \times \Sph^2)_r$ with
        \begin{equation*}
            r(s) = \frac{16\lambda}{|\xi_0|^2} \sech^{\frac32}(4\lambda s).
        \end{equation*}
        \item If $\rho_\crit < \rho_0 < 1$ then we have $W_0 \in \{0, W_\infty, \pi\}$ without loss of generality, where
        \begin{equation*}
            W_\infty = \arccos\left(-\frac{1}{2\sqrt{2}\lambda}\sqrt{\frac{1}{\rho_0^2}-1}\right).
        \end{equation*}
        The DYM pair is globally defined on $(\R\times\Sph^2)_r$ for all choices of $W_0$, and $z$ oscillates asymptotically.
        \smallskip
        \begin{itemize}[itemsep=0.1cm]
            \item If $W_0 = W_\infty$, the solution is explicitly
            \begin{align*}
                z(s) &= \rho_0 \exp\left(-i\left(\frac{s}{\sqrt{2}}\sqrt{1-\rho_0^2} + W_\infty\right)\right),\\
                \xi(s) &= \xi_0 \exp\left(\frac{s}{2\sqrt{2}} \sqrt{\frac{\rho_0^2}{\rho_\crit^2}-1} \right) \exp\left(-\frac{is}{2\sqrt{2}} \sqrt{1-\rho_0^2}\right),\\
                r(s) &= \frac{2\rho_0^2\sqrt{2(1-\rho_0^2)}}{|\xi_0|^2} \, \exp\left(-\frac{s}{\sqrt{2}}\sqrt{\frac{\rho_0^2}{\rho_\crit^2}-1}\right). 
            \end{align*}
            \item If $W_0 = 0$, the solution is such that $\xi \to \infty$ and $r \to 0$ as $s\to\pm\infty$.
            \item If $W_0 = \pi$, the solution is such that $\xi \to 0$ and $r \to \infty$ as $s\to\pm\infty$.
        \end{itemize}
        \item If $\rho_0 = \rho_\crit$, then we have $W_0 \in \{0, \pi\}$ without loss of generality.
        \smallskip
        \begin{itemize}[itemsep=0.1cm]
            \item If $W_0 = 0$, the solution is such that $\xi \to \infty$ and $r \to 0$ as $s\to\pm\infty$.
            \item If $W_0 = \pi$, if we define the coordinate $t = \lambda\rho_\crit s$, then
            \begin{equation*}
                (z,\xi)(t) = (\rho_\crit e^{-2it}, \, \xi_0 e^{-it}),
            \end{equation*}
            with respect to the parametrization $t\mapsto e^{it} \in \Sph^1$ by arc length, and $(z,\xi)$ defines a DYM pair on
            $\Sph^1(8\rho_\crit^2/|\xi_0|^2) \times \Sph^2(8\lambda \rho_\crit^3/|\xi_0|^2)$, where the arguments denote the radii of the spheres.
        \end{itemize}
        \item If $\rho_0 < \rho_\crit$, we have $W_0=0$ without loss of generality.
        The associated DYM pair is globally defined on $(\R \times \Sph^2)_r$ with $(z,\xi)$ and $r$ bounded.
        Furthermore, for countably many choices of $\rho_0$, the solution is periodic and thus induces a DYM pair on $(\Sph^1 \times \Sph^2)_r$.
    \end{enumerate}
\end{proposition}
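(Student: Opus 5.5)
The key observation is that $\rho\equiv\rho_0$ forces $H\equiv 0$ by \eqref{eq-rho-polar-delta=0}. Equation \eqref{eq-H-polar-delta=0} then stops being an evolution equation and becomes an algebraic relation that determines $r$.

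Multiplying \eqref{eq-H-polar-delta=0} by $r$ and writing $x:=rR^2>0$, it becomes the quadratic
\begin{equation*}
    \frac{x^2}{4\rho_0^3} - \lambda x\cos W - \rho_0(1-\rho_0^2) = 0,
    \qquad\text{so}\qquad
    x = 2\rho_0^3\Big(\lambda\cos W \pm \sqrt{\lambda^2\cos^2 W + a}\Big),
    \quad a := \rho_0^{-2}-1 .
\end{equation*}
Substituting $x$ into \eqref{eq-W-polar-delta=0} gives a single autonomous scalar ODE
\begin{equation*}
    W' = f(W) := \rho_0\Big(3\lambda\cos W \pm \sqrt{\lambda^2\cos^2 W + a}\Big).
\end{equation*}
Afterwards $R$ is recovered from \eqref{eq-R-polar-delta=0} by the quadrature $(\log R)' = \lambda\rho_0\sin W$. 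Then $r = x/R^2$, and $z,\xi$ follow from Lemma \ref{lem-polar-form}. So the whole proposition reduces to a phase-line analysis of $W'=f(W)$, together with tracking the sign of $\int\sin W$ and the phase integrals. I would treat the cases in the order of the statement.

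\emph{Case $\rho_0>1$.} Here $a<0$, so the radicand becomes negative on a neighbourhood of $\cos W=0$. One also needs $x>0$. I would show that every trajectory is driven into the region where the radicand is negative, or to the boundary $x=0$, in finite $s$. At that point $r$ degenerates or $W'$ ceases to be defined, so the solution is singular at a finite $s$.

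\emph{Case $\rho_0=1$.} Here $a=0$, so the square root equals $\lambda|\cos W|$. Positivity of $x$ forces $\cos W>0$ and the $+$ root, which gives $x=4\lambda\cos W$ and $W'=4\lambda\cos W$. With $W_0=0$, integrating gives $\tan W = \sinh(4\lambda s)$, and $R$ follows in closed form. The formulas for $z$, $\xi$ and $r$ then come out by direct substitution.

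\emph{Case $\rho_0<1$.} Now $a>0$, the product of the two roots is negative, so exactly the $+$ root gives $x>0$. Zeros of $f$ require $\cos W<0$ and $8\lambda^2\cos^2W = a$, i.e.\ $\cos W = -\sqrt{a}/(2\sqrt2\,\lambda)$. This is solvable precisely when $\rho_0\ge\rho_\crit$, which is the origin of both $\rho_\crit$ and $W_\infty$.

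For $\rho_\crit<\rho_0<1$ there are two equilibria $\pm W_\infty$ (mod $2\pi$). The equilibrium solution $W\equiv W_\infty$ gives the explicit exponential formulas. A trajectory starting at $W_0=0$ runs from $-W_\infty$ to $W_\infty$. Hence $(\log R)'\to\pm\lambda\rho_0\sin W_\infty$ as $s\to\pm\infty$, so $R\to\infty$ and $r\to 0$ at both ends. For $W_0=\pi$ the trajectory lies in the complementary interval, the signs reverse, and $R\to0$, $r\to\infty$. In all these cases $z$ oscillates because its phase grows asymptotically linearly.

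For $\rho_0=\rho_\crit$ the two equilibria merge into a double zero at $W=\pi$. The constant solution $W\equiv\pi$ yields the explicit $\Sph^1\times\Sph^2$ solution, after checking the radii and the arc-length reparametrisation $t=\lambda\rho_\crit s$. The trajectory with $W_0=0$ is homoclinic to $\pi$ and behaves like the $W_0=0$ case above.

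For $\rho_0<\rho_\crit$ we have $f>0$ everywhere, so $W$ increases through all angles with period $T=\int_0^{2\pi}\diff W/f(W)$. Since $f$ is even in $W$, we get $\int_0^T\sin W\,\diff s=\int_0^{2\pi}\sin W/f(W)\,\diff W=0$. Hence $R$, and therefore $r=x/R^2$, is $T$-periodic and bounded. The solution is global and bounded.

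The step I expect to be the main obstacle is the final periodicity claim. Over one period, $z$ and $\xi$ pick up the phases $2\lambda\rho_0\int_0^T\cos W - 2\pi$ and $\lambda\rho_0\int_0^T\cos W$ respectively. The pair descends to $\Sph^1\times\Sph^2$ after finitely many periods exactly when these phases are rational multiples of $2\pi$; the $\U(1)$-gauge symmetry \eqref{eq-u(1)-transf} may be used to absorb part of the phase. My plan is to show that $\Theta(\rho_0):=\rho_0\int_0^{2\pi}\cos W/f(W)\,\diff W$ is continuous and non-constant on $(0,\rho_\crit)$. I would try to get non-constancy from its limits: as $\rho_0\to\rho_\crit$ the integrand concentrates near the degenerate zero at $W=\pi$, while as $\rho_0\to 0$ it tends to a different value. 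By the intermediate value theorem, $\Theta$ then attains countably many values in $2\pi\Q$, and each of these gives a periodic solution.
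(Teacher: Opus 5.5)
Your reduction ($H\equiv 0$, the quadratic for $x=rR^2$, the autonomous equation $W'=f(W)$, and $R$ by quadrature) is exactly the paper's route. However, one step is wrong, and two others need an idea you have not supplied.

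The wrong step is in (iii) for $W_0=\pi$: the signs do \emph{not} reverse. Since
\[
f'(W)=-\rho_0\sin W\Big(3\lambda+\frac{\lambda^2\cos W}{\sqrt{\lambda^2\cos^2 W+a}}\Big)
\]
and the bracket exceeds $2\lambda$ when $a>0$, the equilibrium $W_\infty$ is attracting and $2\pi-W_\infty$ is repelling. So the orbit through $\pi$ runs from $2\pi-W_\infty$ at $s=-\infty$ down to $W_\infty$ at $s=+\infty$, i.e.\ it has the same limits modulo $2\pi$ as the orbit through $0$. Concretely, uniqueness gives $W(s)=2\pi-W(-s)$, and $W(s)\in(W_\infty,\pi)$ for $s>0$. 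Hence $\sin W$ is odd and positive for $s>0$, $R$ is even with its minimum at $s=0$, and $(\log R)'\to\pm\lambda\rho_0\sin W_\infty$ as $s\to\pm\infty$. So for $W_0=\pi$ one also gets $|\xi|\to\infty$ and $r\to 0$ at both ends. The paper's proof makes the same slip (it asserts $\sin W\to\mp\sin W_\infty$). That bullet of the statement is therefore false as written; the correct conclusion is the same as for $W_0=0$.

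The first missing idea concerns $\rho_0=\rho_\crit$, $W_0=0$. This orbit tends to $\pm\pi$, where $\sin W\to 0$, so the limit of $(\log R)'$ is $0$ and ``behaves like the $W_0=0$ case'' does not follow. You need non-integrability of $\sin W$. Rationalising, for $\cos W\le 0$,
\[
f(W)=\frac{8\lambda^2\rho_\crit\sin^2 W}{\sqrt{\lambda^2\cos^2 W+8\lambda^2}-3\lambda\cos W}\le 2\sqrt{2}\,\lambda\rho_\crit\sin^2 W,
\]
so $\pi-W(s)\gtrsim 1/s$ and $\int_0^s\sin W$ diverges logarithmically; this is what the paper does.

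The second is the quantitative core of (i). Real positive $x$ forces $\cos W\ge c_*:=\lambda^{-1}\sqrt{1-\rho_0^{-2}}$, and there $0\le\sqrt{\lambda^2\cos^2W+a}<\lambda\cos W$. So both branches give $W'\ge 2\lambda\rho_0 c_*>0$, and $W$ leaves this arc in finite $s$ in both directions. Your alternative ``$x=0$'' never occurs: the two roots have product $4\rho_0^4(\rho_0^2-1)>0$, so $r$ stays bounded below and the breakdown is genuine.

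Smaller points:
\begin{enumerate}
\item In (ii), $R(0)=|\xi_0|/\sqrt2$ gives $r=8\lambda|\xi_0|^{-2}\sech^{3/2}(4\lambda s)$ rather than $16\lambda$; the paper's proof uses $R(0)=\tfrac12|\xi_0|$.
\item In (v) your endpoint argument is sound: $\Theta\to0$ as $\rho_0\to0$, and $\Theta\to-\infty$ as $\rho_0\to\rho_\crit$, since $1/f$ acquires a non-integrable double pole at $W=\pi$ where $\cos W\approx-1$. It is cleaner than the paper's argument, which claims the right-hand side of the $W$-equation blows up as $\rho_0\to 0$ (it tends to $1$) and never excludes that this quantity vanishes identically.
\item Your $\U(1)$ aside only helps with an $s$-dependent rotation, which leaves the gauge $a=0$ used in the statement. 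If one allows it, the increments $\Phi_z=2\lambda\rho_0\int_0^T\cos W-2\pi$ and $\Phi_\xi=\lambda\rho_0\int_0^T\cos W$ satisfy $\Phi_z=2\Phi_\xi-2\pi$. Then $z\mapsto e^{i\phi}z$, $\xi\mapsto\xi e^{i\phi/2}$ with $\phi(s)=-2\Phi_\xi s/T$ makes the pair exactly $T$-periodic for every $\rho_0<\rho_\crit$, at the cost of a constant $a$.
\end{enumerate}
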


\begin{remark}
    In particular, the DYM pairs with $\delta_0 = 0$ and constant $\rho \equiv \rho_0 \leq 1$ are all globally defined on $(\R \times \Sph^2)_r$, and there exists a countably infinite set of values $\rho_0 \leq \rho_\crit$ for which the DYM pairs descend to solutions on $(\Sph^1\times\Sph^2)_r$, with $r$ being constant if and only if $\rho_0 = \rho_\crit$.
\end{remark}

In the rest of the subsection, we prove Proposition \ref{prop-constant-rho-delta=0}.
We also provide some exemplary plots of the solutions that do not admit a closed form expression.

To start with, since $\rho \equiv \rho_0$, \eqref{eq-H-polar-delta=0} gives
\begin{equation}\label{eq-constant-rho-cosW-expr}
     2\lambda \rho_0 \cos W = \frac{rR^2}{2\rho_0^2} - \frac{2\rho_0^2}{r R^2}(1-\rho_0^2),
\end{equation}
which can also be rewritten as
\begin{equation}\label{eq-constant-rho-rR-quadratic}
    \left(\frac{rR^2}{2\lambda\rho_0^3} - \cos W\right)^2 = \cos^2 W - \frac{1}{\lambda^2}\left(1 - \frac{1}{\rho_0^2}\right).
\end{equation}
This completely determines the metric coefficient $r$ in terms of the variables $(R, W)$, so that in particular we can completely eliminate $r$ from the system, which makes the autonomous and simpler to study. 
It is natural to consider the cases $\rho_0 < 1$, $\rho_0=1$, and $\rho_0 > 1$ separately.

\subsubsection{\texorpdfstring{$\rho_0 > 1$}{rho0 > 1}}
In this case the function
\begin{equation}\label{eq-constant-rho-cosW-RHS}
    x \mapsto x - \frac{1}{x}(1-\rho_0^2)
\end{equation}
is clearly positive for $x > 0$, and has a minimum at $x=\sqrt{\rho_0^2-1}$, so that \eqref{eq-constant-rho-cosW-expr} implies that $\cos W$ is positive, and more particularly 
\begin{equation*}
    \cos W \in [\alpha, 1], \qquad \alpha = \frac{1}{\lambda}\sqrt{1-\frac{1}{\rho_0^2}} > 0.
\end{equation*}
Note that $\alpha < 1$ since $\lambda \geq 1$.
Then \eqref{eq-constant-rho-rR-quadratic} gives two branches of solutions
\begin{equation}\label{eq-constant-rho-rR-expr}
    \frac{r_\pm R_\pm^2}{2\rho_0^2} = \lambda\rho_0 \left( \cos W_\pm \pm \sqrt{\cos^2 W_\pm - \alpha^2}\right),
\end{equation}
and so
\eqref{eq-W-polar-delta=0} becomes
\begin{equation}\label{eq-constant-rho-W'}
    W_\pm' 
    = \lambda\rho_0 \left(3\cos W_\pm \pm \sqrt{\cos^2 W_\pm - \alpha^2}\right).
\end{equation}
Now a simple calculus argument shows that $f_\pm(x) = \lambda\rho_0(3x \pm \sqrt{x^2-\alpha^2})$ is uniformly positive for $x \in [\alpha, 1]$ for both choices of sign, and hence $W_\pm' = f_\pm(\cos W_\pm)$ implies that $W_\pm$ uniformly increases for both choices of sign. Consequently, $\cos W_\pm \to \alpha$ at some finite $s$ (both forwards and backwards).
Then \eqref{eq-constant-rho-W'} implies that
$W_\pm$ is never $C^2$-extendible across this point. 
On the other hand \eqref{eq-R-polar-delta=0} implies that $R_\pm$ remains bounded, so that from \eqref{eq-constant-rho-rR-expr} we get that $r_\pm$ remains lower bounded by a positive constant, as $\cos W_\pm \to \alpha$. It follows that this singularity is not a consequence of the geometry degenerating, but is a true singularity of the solutions to the equations, which are consequently not of interest to us.

\subsubsection{\texorpdfstring{$\rho_0=1$}{rho0 = 1}}
In this case \eqref{eq-constant-rho-cosW-expr} directly gives
\begin{equation}\label{eq-rho=1-W'}
    rR^2 = 4\lambda \cos W,
    \quad\text{so that}\quad
    W' 
    = 4\lambda \cos W .
\end{equation}
In particular the first identity implies that we need $\cos W > 0$, i.e.\ $|W| < \pi/2$.
Then \eqref{eq-rho=1-W'} can be solved by substituting $W = \arctan\sinh \gamma$, and the general solution is 
\begin{equation*}
    W(s) = \arctan \sinh( \gamma_0 + 4\lambda s), \qquad \gamma_0 = \arsinh\tan(W_0). 
\end{equation*}
Then it also follows that (recall we are setting $R_0 = \frac12 |\xi_0|$ since $\delta_0=0$)
\begin{equation*}
    R(s) = \frac12|\xi_0| \exp \int_0^s \lambda\sin W = \frac12|\xi_0| \cos^{\frac14}(W_0) \cosh^\frac{1}{4}(\gamma_0 + 4\lambda s),
\end{equation*}
as well as
\begin{equation}\label{eq-rho=1-r}
    r(s) = \frac{4\lambda \cos W}{R^2} = \frac{16\lambda}{|\xi_0|^2 \cos^{\frac12} W_0} \sech^{\frac32}(\gamma_0+4\lambda s).
\end{equation}
The corresponding solution of the Dirac-Yang-Mills system is then given by
\begin{align*}
   z(s) &= \exp\left(-\frac{i}{2}\left( \arctan\sinh(\gamma_0 + 4\lambda s) - W_0 \right)\right)\\
   \xi(s) &= \xi_0 \cos^{-\frac14}(W_0) \cosh^{\frac14}(\gamma_0 + 4\lambda s)  \exp\left(\frac{i}{4}\left( \arctan\sinh(\gamma_0 + 4\lambda s) - W_0 \right)\right).
\end{align*}
Note that $s\mapsto W$ defines a bijection $\R \to (-\pi/2, \pi/2)$ so that since \eqref{eq-rho=1-W'} is autonomous, all of these solutions can  be obtained from the one with $W_0 = 0$ by translating the coordinate $s$, which also does not change the structure of the metric.
These solutions have the interesting feature that the metric coefficient $r$, which describes the geometric radius of the spheres that the manifold is foliated by, tends to 0 at $\pm\infty$.

\subsubsection{\texorpdfstring{$\rho_0 < 1$}{rho0 < 1}}
\label{subsubsec-delta=0-constant-rho<1}

In this case, \eqref{eq-constant-rho-rR-quadratic} gives a unique positive solution
\begin{equation}\label{eq-rR-delta=0-rho<1}
    \frac{r R^2}{2\rho_0^2} = \lambda\rho_0 \left( \cos W + \sqrt{\cos^2 W + \frac{1}{\lambda^2}\left(\frac{1}{\rho_0^2} - 1\right)}\right),
\end{equation}
and \eqref{eq-W-polar-delta=0} becomes
\begin{equation}\label{eq-W-delta=0-rho<1}
    W' 
    = \lambda\rho_0 \left(3\cos W + \sqrt{\cos^2 W + \beta^2}\right), \qquad
    \beta = \frac{1}{\lambda}\sqrt{\frac{1}{\rho_0^2} - 1}.       
\end{equation}
The right-hand side of \eqref{eq-W-delta=0-rho<1} is smooth and uniformly upper and lower bounded, so the corresponding solution $W$ is global.
The associated Dirac-Yang-Mills pair
\begin{align*}
    z(s) &= \rho_0\exp\left( i\left[2\lambda\rho_0 \int_0^s \cos W(t) \,\diff t - W(s) + W_0 \right] \right),\\
    \xi(s) &= \xi_0 \exp\left(\lambda \rho_0 \int_0^s \sin W(t)\, \diff t\right)
    \exp\left(i\lambda \rho_0 \int_0^s \cos W(t)\, \diff t\right)
\end{align*}
as well as the corresponding metric coefficient (cf.\ \eqref{eq-rR-delta=0-rho<1})
\begin{equation*}
    r(s) = \frac{4\lambda\rho_0^3}{|\xi_0|^2} \left( \cos W(s) + \sqrt{\cos^2 W(s) + \frac{1}{\lambda^2}\left(\frac{1}{\rho_0^2} - 1\right)}\right) \exp\left(-2\lambda\rho_0 \int_0^s \sin W(t)\,\diff t\right)
\end{equation*}
depend only on $W$ and are then also globally defined.
In general, the equation \eqref{eq-W-delta=0-rho<1} does not admit a simple closed form solution,%
\footnote{
Provided that the right-hand side is non-zero we can implicitly write
\begin{equation*}
    \lambda \rho_0 s = \int_{W_0}^{W(s)} \frac{\diff W}{3\cos W + \sqrt{\cos^2 W + \beta^2}}
    = 2\int_{\tan \frac{W_0}{2}}^{\tan \frac{W(s)}{2}} \frac{\diff u}{3(1-u^2) + \sqrt{(1-u^2)^2 + \beta^2 (1+u^2)^2}}.
\end{equation*}
which, however, does not seem to admit a simple closed form in general.
}
so we do not try to make the formulas more explicit, but we can still qualitatively analyze the behaviour. To this end we let $\rho_\crit = (1+8\lambda^2)^{-\frac12}$ and consider three cases separately (see also Figure \ref{fig-constant-rho<1-delta=0}).

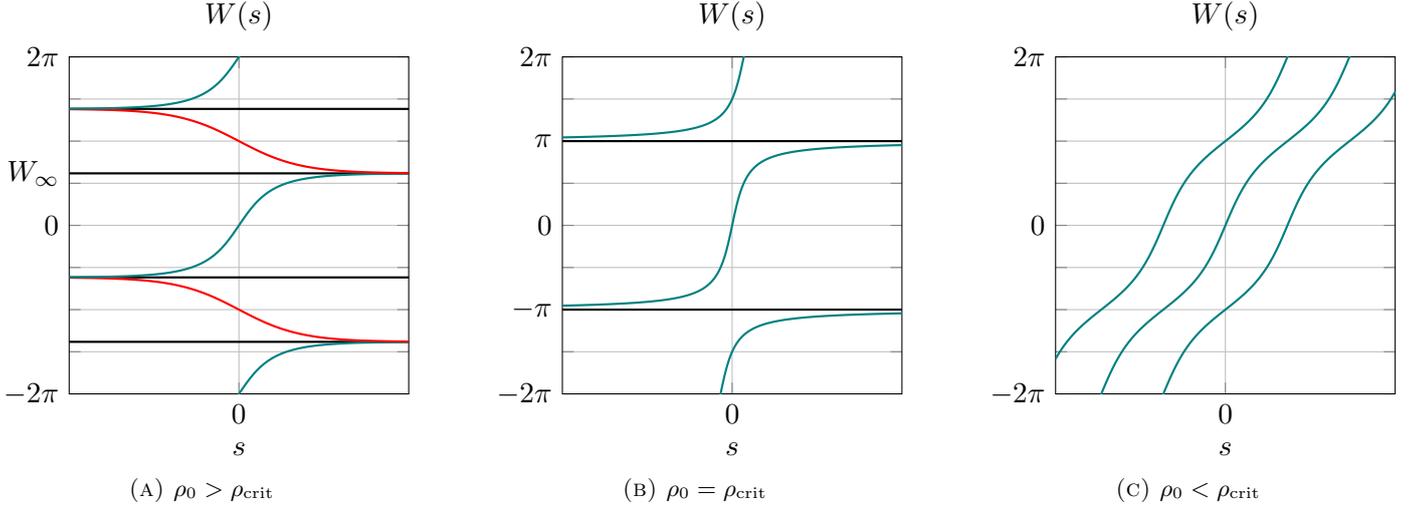
\begin{figure}[t]
\centerline{
\subfloat[$\rho_0 > \rho_\crit$]{
\begin{tikzpicture}

\pgfmathsetmacro{\lam}{1}
\pgfmathsetmacro{\rho}{0.7}
\pgfmathsetmacro{\bet}{(1/\lam)*sqrt(1/\rho^2-1)}
\pgfmathsetmacro{\Winf}{rad(acos(-\bet/sqrt(8)))}
\pgfmathsetmacro{\boxw}{6}
\pgfmathsetmacro{\boxh}{4*pi}

\begin{axis}
[
    title = {$W(s)$},
    grid=both,
    xmin = -\boxw/2, xmax = \boxw/2,
    ymin = -\boxh/2, ymax = \boxh/2,
    width = 0.38\textwidth,
    height = 0.38\textwidth,
    xlabel = {$s$},
    xtick = {0},
    ytick = {-2*pi, -3*pi/2, \Winf-2*pi, -pi, -\Winf, -pi/2, 0, pi/2, \Winf, pi, 2*pi-\Winf, 3*pi/2, 2*pi},
    yticklabels = {$-2\pi$, , , , , , $0$, , $W_\infty$ , , , , $2\pi$}
]

\addplot [domain=-\boxw/2:\boxw/2, samples=50,thick]({x},{\Winf});
\addplot [domain=-\boxw/2:\boxw/2, samples=50,thick]({x},{-\Winf});
\addplot [domain=-\boxw/2:\boxw/2, samples=50,thick]({x},{\Winf-2*pi});
\addplot [domain=-\boxw/2:\boxw/2, samples=50,thick]({x},{-\Winf+2*pi});

\addplot[color=teal,smooth,tension=0.7,thick] table [x index=0,y index=1,col sep=comma] {anc/W_deltapositive_rhosupercritical.csv};
\addplot[color=red,smooth,tension=0.7,thick] table [x index=0,y index=2,col sep=comma] {anc/W_deltapositive_rhosupercritical.csv};
\addplot[color=teal,smooth,tension=0.7,thick] table [x index=0,y index=3,col sep=comma] {anc/W_deltapositive_rhosupercritical.csv};
\addplot[color=red,smooth,tension=0.7,thick] table [x index=0,y index=4,col sep=comma] {anc/W_deltapositive_rhosupercritical.csv};
\addplot[color=teal,smooth,tension=0.7,thick] table [x index=0,y index=5,col sep=comma] {anc/W_deltapositive_rhosupercritical.csv};

\end{axis}
\end{tikzpicture}
}
\qquad
\subfloat[$\rho_0 = \rho_\crit$]{
\begin{tikzpicture}

\pgfmathsetmacro{\lam}{1}
\pgfmathsetmacro{\boxw}{30}
\pgfmathsetmacro{\boxh}{4*pi}

\begin{axis}
[
    title = {$W(s)$},
    grid=both,
    xmin = -\boxw/2, xmax = \boxw/2,
    ymin = -\boxh/2, ymax = \boxh/2,
    width = 0.38\textwidth,
    height = 0.38\textwidth,
    xlabel = {$s$},
    xtick = {0},
    ytick = {-2*pi, -3*pi/2, -pi, -pi/2, 0, pi/2, pi, 3*pi/2, 2*pi},
    yticklabels = {$-2\pi$,, $-\pi$, , $0$, , $\pi$, , $2\pi$}
]

\addplot [domain=-\boxw/2:\boxw/2, samples=50,thick]({x},{pi});
\addplot [domain=-\boxw/2:\boxw/2, samples=50,thick]({x},{-pi});

\addplot[color=teal,smooth,tension=0.7,thick] table [x index=0,y index=1,col sep=comma] {anc/W_deltapositive_rhocritical.csv};
\addplot[color=teal,smooth,tension=0.7,thick] table [x index=0,y index=2,col sep=comma] {anc/W_deltapositive_rhocritical.csv};
\addplot[color=teal,smooth,tension=0.7,thick] table [x index=0,y index=3,col sep=comma] {anc/W_deltapositive_rhocritical.csv};

\end{axis}
\end{tikzpicture}
}
\qquad
\subfloat[$\rho_0 < \rho_\crit$]{
\begin{tikzpicture}

\pgfmathsetmacro{\lam}{1}
\pgfmathsetmacro{\boxw}{20}
\pgfmathsetmacro{\boxh}{4*pi}

\begin{axis}
[
    title = {$W(s)$},
    grid=both,
    xmin = -\boxw/2, xmax = \boxw/2,
    ymin = -\boxh/2, ymax = \boxh/2,
    width = 0.38\textwidth,
    height = 0.38\textwidth,
    xlabel = {$s$},
    xtick = {0},
    ytick = {-2*pi, -3*pi/2, -pi, -pi/2, 0, pi/2, pi, 3*pi/2, 2*pi},
    yticklabels = {$-2\pi$,, , , $0$, , , , $2\pi$}
]

\addplot[color=teal,smooth,tension=0.7,thick] table [x index=0,y index=1,col sep=comma] {anc/W_deltapositive_rhosubcritical.csv};
\addplot[color=teal,smooth,tension=0.7,thick] table [x index=0,y index=2,col sep=comma] {anc/W_deltapositive_rhosubcritical.csv};
\addplot[color=teal,smooth,tension=0.7,thick] table [x index=0,y index=3,col sep=comma] {anc/W_deltapositive_rhosubcritical.csv};

\end{axis}
\end{tikzpicture}
}
}
\caption{Solutions $W(s)$ of \eqref{eq-W-delta=0-rho<1}.}
\label{fig-constant-rho<1-delta=0}
\end{figure}

\begin{enumerate}
    \item If $\rho_0 > \rho_\crit$, the equation \eqref{eq-W-delta=0-rho<1} has fixed points whenever $\cos W = -\frac{\beta}{2\sqrt{2}} < 0$.
    Define 
    \begin{equation}\label{eq-W-infty}
        W_\infty = \arccos\left(-\frac{\beta}{2\sqrt{2}}\right) = \pi - \arcsin\left( \frac{1}{2\sqrt{2}\lambda}\left(\frac{1}{\rho_\crit^2} - \frac{1}{\rho_0^2}\right) \right) \in \left(\frac{\pi}{2},\, \pi\right).
    \end{equation}
    The solutions with $W_0 = W_\infty$ and $W_0 = -W_\infty$ are then constant.
    Solutions with $|W_0| < W_\infty$ are increasing and have $W \to \pm W_\infty$ as $s \to \pm \infty$, while solutions with $W_\infty < |W_0| < \pi$ are decreasing and have $W \to \pm (W_\infty - (1 \mp \mathrm{sgn}\,W_0)\pi)$ as $s \to \pm\infty$.
    By the autonomy of the system and the fact that the solutions provide a bijection from $\R$ to an interval between the fixed points (explicitly $(-W_\infty, W_\infty)$ mod $2\pi$ for the increasing ones and $(W_\infty, 2\pi-W_\infty)$ mod $2\pi$ for the decreasing ones), we see that the all the increasing (resp.\ all the decreasing) solutions are related to each other by translations of $s \mapsto s+s_0$ and $W \mapsto W+2\pi k$ for $k\in\Z$. Furthermore, the system also has a reflection symmetry $(s,W) \mapsto -(s,W)$ and in particular one does not lose generality by only considering solutions with $W_0 \in \{0, W_\infty, \pi\}$, where $W_0 = W_\infty$ is of constant type, $W_0=0$ is of increasing type, and $W_0=\pi$ is of decreasing type.

    In the constant case $W_0 = W_\infty$, one can explicitly express the DYM pair as shown in Proposition \ref{prop-constant-rho-delta=0} (iii).%
    Thus we see that, the solutions have $\xi\to\infty$ and $r\to 0$ as $s\to \infty$, and $\xi\to 0$ and $r\to \infty$ as $s\to-\infty$, while the variable $z$ oscillates and has no limits.

    In the increasing case $W_0 = 0$, we see that $\sin W \to \pm\sin W_\infty > 0$ as $s\to \pm\infty$. It follows then that $\xi \to \infty$ and $r \to 0$ as $s\to\pm \infty$, while $z$ oscillates.

    Analogously, in the decreasing $W_0 = \pi$ case, we have that $\sin W \to \mp\sin W_\infty$ as $s\to \pm\infty$ and it follows that $\xi \to 0$ and $r\to\infty$ as $s\to\pm\infty$, while $z$ again oscillates.

    \item If $\rho_0 = \rho_\crit$ the equation \eqref{eq-W-delta=0-rho<1} has fixed points at $W = \pi + 2k\pi$ for $k\in \Z$. 
    Note that these fixed points are limits of two colliding fixed points from the previous case $\rho_0 > \rho_\crit$. In particular, the regions that previously admitted decreasing solutions close up, and one only has solutions of constant type or of increasing type. Then by translation and symmetries, as before it suffices to consider only $W_0 \in \{0, \pi\}$.

    The increasing type solutions with $W_0 = 0$ are now somewhat more interesting since $\sin W \to 0$ as $s\to\pm\infty$ so that one could expect that $\sin W$ is integrable. However, as $\cos W \to -1$, we can for (without loss of generality positive) large enough $s$ estimate
    \begin{equation*}
        W' = \frac{8\sin^2 W}{-3\cos W + \sqrt{\cos^2 W + 8}} \leq \sin^2 W,
    \end{equation*}
    which implies that $\sin W(s) \gtrsim \frac{1}{s}$ for large $s$, and hence is not integrable in this case either.
    Thus $\xi \to \infty$ and $r \to 0$, while $z$ oscillates, as $s \to \pm \infty$.
 
    On the other hand, the constant type solutions with $W_0 = \pi$ also have constant $r$ and $s\mapsto (z,\xi)$ is periodic, so that the solution descends to a DYM pair on $(\Sph^1\times \Sph^2)_r$.
    We can make this more precise geometrically by defining a new coordinate $t = \lambda(1+8\lambda^2)^{-\frac12}s = \lambda\rho_\crit s$, in terms of which
    \begin{equation*}
        (z,\xi)(t) = (\rho_\crit e^{-2it},\, \xi_0 e^{-it}),
    \end{equation*}
    with respect to the parametrization $t\mapsto e^{it} \in \Sph^1$ by arc length (as a submanifold of $\R^2$).
    The metric then becomes
    \begin{equation*}
        g = \frac{64}{|\xi_0|^4(1+8\lambda^2)^2}\,g_{\Sph^1} + \frac{64\lambda^2}{|\xi_0|^4(1+8\lambda^2)^{3}}g_{\Sph^2},
    \end{equation*}
    and we can interpret the solution as being defined on the Riemannian direct product of spheres $\Sph^1\left(8\rho_\crit^2/|\xi_0|^2\right) \times \Sph^2\left(8\lambda\rho_\crit^3/|\xi_0|^2\right)$
    where the arguments denote the radii.
    %
    \item If $\rho_0 < \rho_\crit$, we have $W' > 0$ uniformly for any choice of $W_0$, so that $W$ increases uniformly and $W \to \pm \infty$ as $s\to \pm\infty$.
    In particular, for any $W_0$, the corresponding solution $W$ defines a bijection $\R \to \R$, so that by the autonomy the equation and uniqueness of its solutions, we can always translate $s$ and assume without loss of generality that $W_0 = 0$.
    Then $W$ is odd since the equation \eqref{eq-W-delta=0-rho<1} is invariant under reflections $(s,W) \mapsto -(s,W)$.
    Furthermore, since the equation is also invariant modulo $2\pi$, we have         \begin{equation*}
        W(s+kT) = W(s) + 2\pi k, \qquad T=W^{-1}(2\pi),
    \end{equation*}
    for all $k \in \mathbb{Z}$.
    In particular, $s \mapsto e^{iW(s)}$ is periodic with period $T$.
    We now wish to inspect the periodicity of the associated Dirac-Yang-Mills pair $(z,\xi)$ and metric coefficient $r$.
    Since $W$ is odd, we see that $s\mapsto \cos W(s)$ is even, while $s \mapsto \sin W(s)$ is odd, and they are both periodic with period $T$.
    It follows then that
    \begin{equation*}
        \int_0^T \sin W(t)\,\diff t = \int_{-T/2}^{T/2} \sin(t)\,\diff t = 0
    \end{equation*}
    so that $s\mapsto \int_0^s \sin W$ is periodic with period $T$. 
    Thus $(z,\xi)$ and $r$ are periodic if and only if $s\mapsto \exp(i\lambda\rho_0\int_0^s \cos W)$ is periodic and has period equal to an integer multiple of $T$. This is the case if and only if
    \begin{equation*}
        \frac{\lambda\rho_0}{2\pi} \int_0^{T} \cos W(t)\,\diff t = \frac{\lambda\rho_0}{\pi} \int_0^{\frac{T}{2}} \cos W(t)\,\diff t =: \frac{p}{q} \in \Q,
    \end{equation*}
    in which case the minimal period is $qT$, assuming that $p,q \in \Z$ are coprime.    
    Now the solution is a continuous (even smooth) bijection $W : \R \to \R$ depending continuously on $\rho_0$, it follows that the period $T(\rho_0) = W^{-1}(2\pi, \rho_0)$ also depends continuously on $\rho_0$.
    In particular, the function 
    \begin{equation*}
        f : \rho_0 \mapsto \frac{\lambda\rho_0}{\pi} \int_0^{\frac{T(\rho_0)}{2}} \cos W(t)\,\diff t
    \end{equation*}
    is continuous, and hence will attain infinitely many rational values provided that it is non-constant.
    To see that it is indeed non-constant, note that as $\rho_0 \to 0$, the right-hand side of \eqref{eq-W-delta=0-rho<1} tends to infinity, so that in particular $T$ is small for small $\rho_0$. Thus, we can make $f(\rho_0)$ as small as we like, and in particular $f$ cannot be constant as it would then have to be identically zero. It follows that the constructed Dirac-Yang-Mills pair is periodic for countably many choices of $\rho_0$, and for these choices it descends to a solution on $(\Sph^1 \times \Sph^2)_r$.
\end{enumerate}

\subsection{\texorpdfstring{The case $\delta_0 > 0$}{The case delta0 > 0}}

The situation for $\delta_0 > 0$ is in many cases similar to that of $\delta_0 = 0$, but the cases with $\rho_0 \leq 1$ display some crucial differences.

\begin{proposition}[The case $\delta_0 > 0$]
    \label{prop-constant-rho-delta>0}
    Using the notation of Proposition \ref{lem-polar-form}, assume that $\delta_0 > 0$ and $\rho \equiv \rho_0$ for a constant $\rho_0 > 0$, and define $\rho_\crit = (1+8\lambda^2)^{-\frac12}$.
    \begin{enumerate}
        \item If $\rho_0 > 1$, the DYM pair is singular for finite $s$.
        \item If $\rho_0 = 1$, then without loss of generality $W_0=0$, the solution is given by
        \begin{align*}
            P(s) &= \frac12\arcosh\left( \cosh(2P_0)\cosh(4\lambda s)\right),
            \\
            W(s) &= \arcsin\frac{\sinh(4\lambda s)}{\sqrt{\cosh^2(4\lambda s)  - \sech^2(2P_0)}},
        \end{align*}
        and the associated DYM pair is defined globally on $(\R \times \Sph^2)_r$ with
        \begin{equation*}
            r(s) = \frac{8\sqrt{2}\lambda \sinh(2 P_0)}{\delta_0^2} \left[\cosh(2P_0)\cosh(4\lambda s + \gamma_0) + 1\right]^{-\frac32}.
        \end{equation*}
        \item If $\rho_\crit \leq \rho_0 < 1$, then there are distinguished orbits with $(P,W) \to (0,\pi/2)$ and $(P,W) \to (0,3\pi/2)$. All other orbits (i.e.\ not obtainable from the distinguished orbits by symmetries) are global and without loss of generality have $W_0 \in \{0, \pi\}$.
        \smallskip
        \begin{itemize}[itemsep=0.1cm]
            \item If $W_0 = 0$, the orbit is such that $P\to\infty$ and $W \to \pm W_\infty$ as $s\to\pm \infty$.
            \item If $W_0 = \pi$, the orbit is such that $W \to W_\infty$ as $s \to \infty$ and $W \to 2\pi - W_\infty$ as $s \to -\infty$, and $P \to \infty$ at both ends.
        \end{itemize}
        \item If $\rho_0 < \rho_\crit$, one can assume without loss of generality that $W_0 = \pi$. We also let
        \begin{equation*}
            P_\infty = P_\infty(\rho_0) = \arcoth\sqrt{1 + \frac{1}{4\lambda^2}\left(\frac{1}{\rho_0^2}-\frac{1}{\rho_\crit^2}\right)}.
        \end{equation*}
        Then there exists a number $P_\crit > P_\infty > 0$ with the following properties.
        \smallskip
        \begin{itemize}[itemsep=0.1cm]
            \item If $P_0 > P_\crit$ there exists $T > 0$ such that $P$ is $T$-periodic and $W(s + T) = W(s) + 2\pi$ for all $s \in \R$. 
            \item If $P_0 = P_\crit$ then there exists $0<s_\crit<\infty$ for which 
             $(P,W) \to (0, \pi\pm \frac{\pi}{2})$ as $s\to \pm s_\crit$.
            \item If $0<P_0<P_\crit$ then either $P_0=P_\infty$ and the solution is constant $(P,W) \equiv (P_\infty, \pi)$, or otherwise for $P_0 \neq P_\infty$ there exists $T > 0$ such that $P$ and $W$ are $T$-periodic. 
        \end{itemize}
        \smallskip
        In particular, there exist countably many pairs $(\rho_0,P_0)$ with $\rho_0 < \rho_\crit$ and \\ $P_0 \notin  \{P_\infty,P_\crit\}$, for which the associated DYM pair is periodic and thus descends to a solution on $(\Sph^1 \times \Sph^2)_r$.
        For the fixed point solution with $(P_0, W_0) = (P_\infty(\rho_0), \pi)$, the periodic DYM pairs are classified by rational numbers $p/q \in \Q$. Explicitly, if we define the coordinate $t = \frac{\lambda\rho_0}{\sqrt{pq}}\, s$, then the DYM pair is
        \begin{align*}
            z(t) &= \rho_0(\lambda,p/q) e^{-i(p+q)t}\\
             \QuatRe(\xi)(t) &= \delta_0\sqrt{\frac{q}{q-p}} \,e^{- ipt + i\arg\QuatRe(\xi_0)},\\
            \QuatIm(\xi)(t) &= -\delta_0\sqrt{\frac{p}{q-p}} \,e^{- iqt - i\arg\QuatRe(\xi_0)},
        \end{align*}
        for explicitly computable $\rho_0(\lambda, p/q)$, 
        and is defined the closed Riemannian manifold
        \begin{equation*}
            \Sph^1\left(\frac{2\sqrt{pq}}{\lambda\delta_0}\sqrt{1 - \frac{\rho_0^2}{\rho_\crit^2}} \right) \times \Sph^2\left( \frac{2\rho_0}{\delta_0}\sqrt{1 - \frac{\rho_0^2}{\rho_\crit^2}} \right).
        \end{equation*}
    \end{enumerate}
\end{proposition}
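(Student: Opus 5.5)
Setting $\rho\equiv\rho_0$ makes $H\equiv 0$, so \eqref{eq-H-polar-delta>0} becomes an algebraic relation that determines $r$ in terms of $(P,W)$. Writing $x=\delta_0^2 r/(4\rho_0^2)$, it reads
\begin{equation*}
  \frac{x\cosh^2 P}{4\rho_0} - \frac{\rho_0(1-\rho_0^2)}{4x}\cdot\frac{1}{\rho_0^2}\,(\dots) = \frac{\lambda}{2}\sinh P\cos W,
\end{equation*}
which is a quadratic in $x$. We first select its positive root, as in \eqref{eq-rR-delta=0-rho<1}, and insert it into \eqref{eq-W-polar-delta>0}. Together with \eqref{eq-P-polar-delta>0}, this gives an \emph{autonomous planar system}
\begin{equation*}
  P' = 2\lambda\rho_0\sin W, \qquad W' = G_{\rho_0}(P,W),
\end{equation*}
on the half-cylinder $(0,\infty)\times(\R/2\pi\Z)$. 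The remaining argument is phase-plane analysis of this system, case by case. We then recover $z,\xi$ from Lemma \ref{lem-polar-form} and $r$ from the root formula.

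\textbf{The cases $\rho_0>1$ and $\rho_0=1$.} For $\rho_0>1$ we repeat the argument for $\delta_0=0$. The root exists only when $\sinh P\cos W/\cosh P\ge\alpha>0$, and $W'$ is uniformly positive on this region. Hence $W$ reaches the boundary of the admissible region in finite $s$, where the square root fails to be $C^1$ while $P$ and $r$ stay bounded away from degeneration. This is a genuine singularity.

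For $\rho_0=1$ the root is simply $x\propto\lambda\tanh P\cos W/\cosh P$. The system becomes
\begin{equation*}
  P'=2\lambda\sin W, \qquad W' = 2\lambda\cos W\,(\coth P+\tanh P) = 4\lambda\cos W\coth 2P.
\end{equation*}
We look for a first integral. Since $\frac{\diff}{\diff s}\cos W = -W'\sin W$, we get
\begin{equation*}
  \frac{\diff\log\cos W}{\diff P} = -2\coth 2P,
\end{equation*}
so $\cos W\,\sinh 2P=\mathrm{const}=\sinh 2P_0$ if we normalise $W_0=0$. Then $(\cosh 2P)' = 4\lambda\sin W\sinh 2P$, and one checks that $\cosh 2P=\cosh 2P_0\cosh 4\lambda s$. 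Substituting back gives the stated formulas for $P$, $W$ and $r$.

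\textbf{The cases $\rho_0<1$.} Here we locate the fixed points: $\sin W=0$ and $G=0$. At $W=0$ we have $G>0$. At $W=\pi$ the equation $G=0$ becomes an equation in $\coth P$, solvable exactly when $\rho_0<\rho_\crit$, with solution $P_\infty$. We also study the boundary $P\to0$, where $\coth P$ blows up and $W'$ is dominated by $2\lambda\rho_0\coth P\cos W$. The points $(0,\pi/2)$ and $(0,3\pi/2)$ act as saddle-type boundary points, and their stable and unstable manifolds give the distinguished orbits. We also use the reversibility $(s,W-\pi)\mapsto-(s,W-\pi)$ and $W\mapsto W+2\pi$.

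For $\rho_\crit\le\rho_0<1$ there is no interior fixed point. Orbits escape to $P=\infty$, where $\coth P\to1$ and the $W$-equation reduces to the $\delta_0=0$ equation \eqref{eq-W-delta=0-rho<1}. Consequently $W\to\pm W_\infty$ (or $2\pi-W_\infty$), and $P\to\infty$ because $\sin W$ has a fixed sign near these limits.

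For $\rho_0<\rho_\crit$ the point $(P_\infty,\pi)$ is a centre: by reversibility, a linear centre for a reversible system is a nonlinear centre. Closed orbits fill a neighbourhood of it, and they extend up to the separatrix through the boundary saddles; this separatrix defines $P_\crit$. Beyond it, orbits rotate in $W$, which gives $W(s+T)=W(s)+2\pi$.

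Periodicity of the full DYM pair then reduces, exactly as in the $\delta_0=0$ case, to a rationality condition on $\frac{\lambda\rho_0}{\pi}\int_0^T \coth P\cos W$, together with the analogous integrals with $\tanh(P/2)$ and $\coth(P/2)$. Continuity of these integrals in $(\rho_0,P_0)$, combined with non-constancy (shown by letting $\rho_0\to0$), yields countably many periodic pairs.

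For the fixed point we solve directly. The phases are linear in $s$ with rates $\lambda\rho_0\tanh(P_\infty/2)$ and $\lambda\rho_0\coth(P_\infty/2)$. Setting the ratio $\tanh^2(P_\infty/2)=p/q$ and $t=\lambda\rho_0 s/\sqrt{pq}$ gives the displayed formulas, and $\rho_0(\lambda,p/q)$ follows by inverting the formula for $P_\infty$. The radii come from the constant $r$ obtained from the root formula.

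\textbf{Main obstacle.} The hardest step is the global phase portrait for $\rho_0<\rho_\crit$. We must show that every orbit with $P_\infty\ne P_0<P_\crit$ is closed rather than spiralling, which we plan to get from reversibility about the line $W=\pi$. We must also show that $P_\crit$ is well defined, meaning the separatrix meets $W=\pi$ exactly once. For the latter I would first attempt a monotonicity argument in $P_0$ along the line $W=\pi$.
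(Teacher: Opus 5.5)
Your overall route is the paper's. You eliminate $r$ via the positive root coming from \eqref{eq-H-polar-delta>0} and study the autonomous $(P,W)$-system. Its key features are the boundary saddles at $(0,\pi/2)$ and $(0,3\pi/2)$, the centre at $(P_\infty,\pi)$ and the reversibility about $W=\pi$; you also solve the fixed point explicitly. The paper makes the boundary saddles rigorous by rescaling the field by $\tanh P/(\lambda\rho_0)$, and undoing this time change is also what gives $s_\crit<\infty$. For $\rho_0=1$, your first integral $\cos W\sinh 2P$ (equivalently $(\cosh 2P)''=16\lambda^2\cosh 2P$) is a valid and somewhat cleaner alternative to the paper's $F=\sin W\tanh 2P$ with $F'=4\lambda(1-F^2)$.

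\textbf{Main gap: the countability claim for $\rho_0<\rho_\crit$.} Periodicity of $(z,\xi)$ requires
\[
f_1=\frac{\lambda\rho_0}{2\pi}\int_0^T\tanh\frac{P}{2}\cos W,\qquad f_2=\frac{\lambda\rho_0}{2\pi}\int_0^T\coth\frac{P}{2}\cos W
\]
to be rational \emph{simultaneously}. The condition for $z$ then follows from $2\coth P=\tanh\frac P2+\coth\frac P2$. The one-parameter argument from the case $\delta_0=0$ (a continuous non-constant function takes countably many rational values) does not transfer. A continuous map $(\rho_0,P_0)\mapsto(f_1,f_2)$ with non-constant components can miss $\Q^2$ entirely, e.g.\ if $f_2-f_1$ were an irrational constant. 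What you need is that the image contains an open subset of $\R^2$; letting $\rho_0\to0$ only gives non-constancy. The paper obtains the open set on the unbounded component ($P_0>P_\crit$) by sending $P_0\to\infty$. There $f_1$ tends to the corresponding $\delta_0=0$ rotation number, which is non-constant in $\rho_0$. Meanwhile $f_2-f_1=\frac{\lambda\rho_0}{\pi}\int_0^T\csch P\cos W$ tends to $0$ but is argued to be non-zero for finite $P_0$. On the bounded component the paper only offers numerical evidence.

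\textbf{Secondary gap: two phase-plane steps are only asserted.} For $\rho_\crit\le\rho_0<1$, you deduce $P\to\infty$ from the limit of $W$, but that limit was itself derived assuming $P\to\infty$, so the argument is circular. The missing ingredient is a trapping strip. We have $P'>0$ on $\{0<W<\pi\}$, $W'>0$ on $W=0$, and on $W=\pi$
\[
W'=\lambda\rho_0\Big(\sqrt{\tanh^2P+\beta^2}-2\coth P-\tanh P\Big),
\]
which is negative for every $P>0$ when $\beta^2\le 8$, i.e.\ $\rho_0\ge\rho_\crit$. So the strip is forward invariant, $P$ increases, and $P\to\infty$ because there is no fixed point with $P>0$. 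Since $W'=\lambda\rho_0\beta>0$ on $W=\pi/2$, the same reasoning keeps $\mathscr{U}_+$ in $\{\pi/2<W<\pi\}$, so $\mathscr{U}_+\ne\mathscr{S}_+$. The paper gets this from Poincar\'e--Bendixson and Brouwer before reducing to $W_0\in\{0,\pi\}$.

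For $\rho_0<\rho_\crit$, the displayed expression has the sign of $P-P_\infty$. Since $W'$ is increasing in $\cos W$, this gives $W'\ge C_0>0$ on $\{P\ge P_0>P_\infty,\ 0<W<\pi\}$. The branch $\mathscr{U}_+$ cannot recross $W=\pi/2$, has $P'>0$ while $W<\pi$, and cannot accumulate on a centre. It must therefore reach $W=\pi$ at some $P_\crit>P_\infty$, and reversibility then gives $\mathscr{S}_+=\mathscr{U}_+$. This is what defines $P_\crit$ and splits the half-strip into the two components. Your proposed monotonicity argument in $P_0$ along $W=\pi$ does not address whether $\mathscr{U}_+$ reaches $W=\pi$ at all.
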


The remainder of the section is devoted to proving Proposition \ref{prop-constant-rho-delta>0}.
Let $\rho \equiv \rho_0$, so that also $H\equiv 0$, and \eqref{eq-H-polar-delta>0} gives
\begin{equation}\label{eq-constant-rho-cosW-delta>0}
    2\lambda \rho_0 \tanh P \cos W = \frac{\delta_0^2 r\cosh P}{4\rho_0^2} - \frac{4\rho_0^2}{\delta_0^2 r \cosh P}(1-\rho_0^2),
\end{equation}
or equivalently
\begin{equation}\label{eq-constant-rho-rR-quadratic-delta>0}
    \left( \frac{\delta_0^2 r\cosh P}{4\lambda\rho_0^3} - \tanh P \cos W \right)^2 = \tanh^2 P \cos^2 W - \frac{1}{\lambda^2}\left(1 - \frac{1}{\rho_0^2}\right).
\end{equation}
As before, we consider $\rho_0<1$, $\rho_0=1$, and $\rho_0>1$ separately.

\subsubsection{\texorpdfstring{$\rho_0 > 1$}{rho0 > 1}}
As before, properties of the function \eqref{eq-constant-rho-cosW-RHS} in this case imply that
\begin{equation*}
    \tanh P \cos W \in [\alpha, 1], \qquad \alpha = \frac{1}{\lambda}\sqrt{1-\frac{1}{\rho_0^2}} > 0,
\end{equation*}
so that in particular $\cos W > 0$, and $P$ cannot get too close to $0$.
Now we again get two branches of solutions of \eqref{eq-constant-rho-rR-quadratic-delta>0}, for which \eqref{eq-W-polar-delta>0} becomes
\begin{equation*}
    W_\pm' = \lambda \rho_0 (2\coth P_\pm + \tanh P_\pm) \cos W_\pm \pm \lambda\rho_0\sqrt{\tanh^2 P_\pm \cos^2 W_\pm - \frac{1}{\lambda^2}\left(1 - \frac{1}{\rho_0^2}\right)}.
\end{equation*}
As before, one can show that the right-hand side is uniformly positive for both choices of sign and these solutions are not of interest, so we omit the details.

\subsubsection{\texorpdfstring{$\rho_0 = 1$}{rho0 = 1}} 
Here \eqref{eq-constant-rho-cosW-delta>0} gives
\begin{equation}\label{eq-delta>0-r-relation}
    2\lambda \tanh P \cos W = \frac{1}{4}\,\delta_0^2 r\cosh P,
\end{equation}
and the system reduces to
\begin{equation*}
    \begin{cases}
        P' = 2\lambda\sin W,\\[0.1cm]
        W' = 4\lambda\coth 2P \cos W.
    \end{cases}
\end{equation*}
In view of \eqref{eq-delta>0-r-relation}, the only interesting sector is $\tanh P \cos W > 0$, since we need the metric coefficient $r$ to be positive. In particular, we demand $|W_0| < \pi/2$, so that $\cos W_0 > 0$.
A calculation shows that if $F = \sin W \tanh 2P$, then
\begin{equation*}
    F' = 4\lambda(1-F^2), \quad\text{i.e.}\quad F(s) = \tanh(4\lambda s + \gamma_0),
\end{equation*}
where $\gamma_0 = \artanh(\sin W_0 \tanh 2P_0)$.
It follows then that
\begin{equation*}
    P'\tanh 2P = 2\lambda\tanh(4\lambda s + \gamma_0)
\end{equation*}
i.e.\
\begin{align*}
    P(s) &= \frac12\arcosh\left(\frac{\cosh(2P_0)}{ \cosh(\gamma_0)} \cosh(4\lambda s + \gamma_0)\right)\\[0.1cm]
    &=
    \frac12\arcosh\left(\sqrt{1 + \cos^2(W_0) \sinh^2 (2P_0)} \,\cosh(4\lambda s + \gamma_0)\right).
\end{align*}
Then also
\begin{equation*}
    W(s) = \arcsin\frac{\sinh(4\lambda s + \gamma_0)}{\sqrt{\cosh^2(4\lambda s + \gamma_0)  - \frac{1}{1+\cos^2(W_0)\sinh^2 (2P_0)}}},
\end{equation*}
and the metric coefficient is
\begin{equation*}
    r(s) = \frac{8\sqrt{2}\lambda \cos(W_0) \sinh(2 P_0)}{\delta_0^2} \left(\sqrt{1 + \cos^2(W_0) \sinh^2 (2P_0)} \,\cosh(4\lambda s + \gamma_0) + 1\right)^{-\frac32}.
\end{equation*}
Since $W$ defines a bijection $\R \to (-\pi/2,\pi/2)$ for any choice of initial data, we can assume without loss of generality that $W_0=0$ by translating $s$, since the system is autonomous.
Now one could in principle insert these formulae for $P$ and $W$ into the formulas for $z$ and $\xi$ from Lemma \ref{lem-polar-form}, but this becomes too cumbersome to handle explicitly so we do not pursue this further, and instead we just note that these solutions are globally defined on $(\R \times \Sph^2)_r$.

\subsubsection{\texorpdfstring{$\rho_0 < 1$}{rho0 < 1}}
In this case, we get a unique positive branch
\begin{equation*}
    \frac{\delta_0^2 r\cosh P}{4\lambda\rho_0^3} = \tanh P \cos W + \sqrt{\tanh^2 P \cos^2 W + \frac{1}{\lambda^2}\left(\frac{1}{\rho_0^2} - 1\right)},
\end{equation*}
for which the system reduces to
\begin{equation}\label{eq-P-W-system-delta0>0-rho0<1}
    \begin{cases}
        P' = 2\lambda\rho_0 \sin W,\\
        W' = \lambda \rho_0 (2\coth P + \tanh P) \cos W + \lambda\rho_0\sqrt{\tanh^2 P \cos^2 W + \frac{1}{\lambda^2}\left(\frac{1}{\rho_0^2} - 1\right)}.
    \end{cases}
\end{equation}

We note that the system is invariant under translations $W \mapsto W + 2\pi k$ for any integer $k$, and the system also has a reflection symmetry $(s,W) \mapsto -(s, W)$.
Here it will be particularly convenient to study the solutions in the region $0 < W < 2\pi$, and note that this region is also reflection symmetric along $W=\pi$, in view of the symmetry $(s,W) \mapsto (-s, 2\pi-W)$ (cf.\ also Figure \ref{fig-P-W-phase-delta0>0}).

The system \eqref{eq-P-W-system-delta0>0-rho0<1} has a singularity at $P=0$.
To study the behaviour of the system in the vicinity of $P = 0$, we rescale the vector field associated to \eqref{eq-P-W-system-delta0>0-rho0<1} by $\frac{\tanh{P}}{\lambda \rho_0}$ to get a new vector field on $\R_{>0} \times \R$ given by
\begin{equation*}
    X =  
    \begin{bmatrix}
        2 \tanh P \sin W\\
         (2 + \tanh^2 P) \cos W + \tanh P \sqrt{\tanh^2 P \cos^2 W + \beta^2} 
    \end{bmatrix},
\end{equation*}
which extends smoothly across $P=0$ and whose integral curves coincide (as sets) with the integral curves of \eqref{eq-P-W-system-delta0>0-rho0<1} on $\R_{>0} \times \R$.

We note that $X$ restricted to $P=0$ satisfies $X(0,W) = (0, 2\cos W)$, so that the line $P=0$ is invariant. 
In particular, $X$ has fixed points at $(P,W) = (0, \pi \pm \frac{\pi}{2} + 2\pi k)$, connected by integral curves that stay on the line $P = 0$.
We study these fixed points in more detail.
Since by the translation invariance of the system we can without loss of generality assume $0 \leq W_0 \leq 2\pi$, it suffices to consider the fixed points at $W = \pi \pm \frac{\pi}{2}$.
The differential of $X$ satsifies
\begin{equation*}
    \diff X_{(0, \pi \pm \frac{\pi}{2})}
    =
    \begin{bmatrix}
        \mp 2 & 0\\
        \beta & \pm 2
    \end{bmatrix}
\end{equation*}
The corresponding eigenvalues and eigenvectors are
\begin{equation*}
    \left(\mp 2, \,\begin{bmatrix} 0\\ 1\end{bmatrix}\right)
    \quad\text{and}\quad
    \left(\pm 2,\, \begin{bmatrix} 1\\ \mp\frac{1}{4}\beta\end{bmatrix}\right).
\end{equation*}
Denote the $P > 0$ branch of the stable manifold entering $(0, \frac{3\pi}{2})$ in the direction $(1,-\frac{1}{4}\beta)$ by $\mathscr{S}_+$ and the $P > 0$ branch of the unstable manifold exiting $(0, \frac{\pi}{2})$ in the direction $(1,\frac{1}{4}\beta)$ by $\mathscr{U}_+$.
Thus, an orbit $(P,W)$ of \eqref{eq-P-W-system-delta0>0-rho0<1} with $P \to 0$ must satisfy $(P,W) \to (0, \pi \pm \frac{\pi}{2})$ and coincide with the stable (resp.\ unstable) manifold associated to this zero of $X$, while all other orbits stay away from $P=0$.

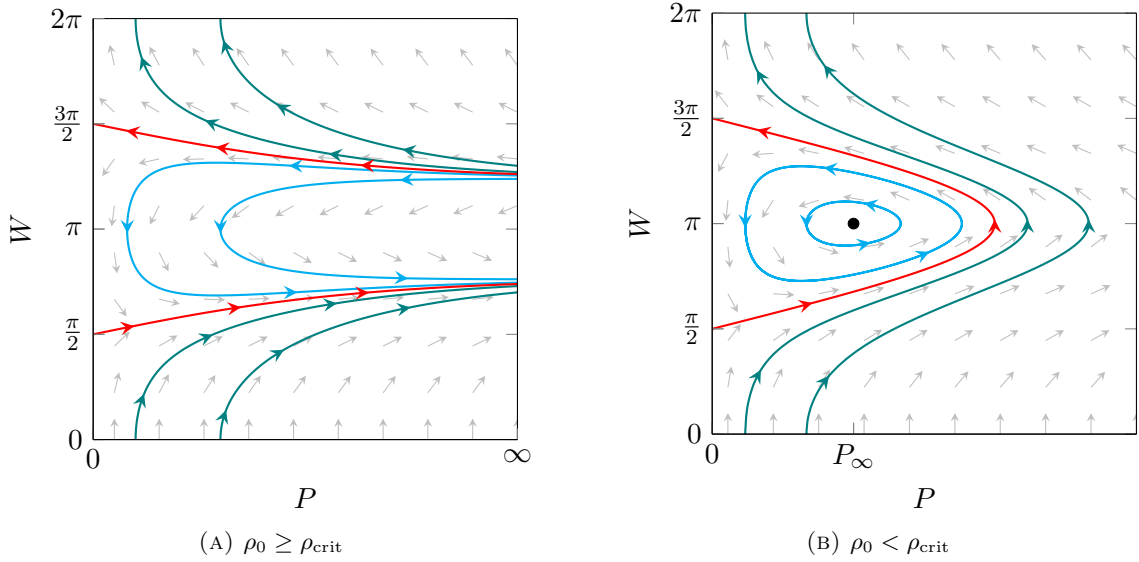
\begin{figure}[t]
\centerline{
\subfloat[$\rho_0 \geq \rho_\crit$]{
\begin{tikzpicture}

\pgfmathsetmacro{\lam}{1}
\pgfmathsetmacro{\rho}{1.4/(sqrt(1+8*\lam^2))}
\pgfmathsetmacro{\boxw}{2.5}
\pgfmathsetmacro{\boxh}{2*pi}

\begin{axis}
[
    xmin = 0, xmax = \boxw,
    ymin = 0, ymax = \boxh,
    width = 0.45\textwidth,
    height = 0.45\textwidth,
    xlabel = {$P$},
    ylabel = {$W$},
    xtick = {0, 2.5},
    xticklabels = {$0$, $\infty$},
    ytick = {-2*pi,-3*pi/2,-pi,-pi/2,0,pi/2,pi,3*pi/2,2*pi},
    yticklabels = {$-2\pi$, $-\frac{3\pi}{2}$, $-\pi$, $-\frac{\pi}{2}$, $0$, $\frac{\pi}{2}$, $\pi$, $\frac{3\pi}{2}$, $2\pi$},
    view = {0}{90}
]

\addplot3[
    quiver = {
        u = { (2*\lam*\rho*sin(deg(y))) / sqrt( (2*\lam*\rho*sin(deg(y)))^2/\boxw^2 + (\lam*\rho*((2/tanh(x)+tanh(x))*cos(deg(y)) + sqrt(tanh(x)^2*cos(deg(y))^2 + 1/\lam^2 * (1/\rho^2-1))))^2 /\boxh^2 )},
        v = {(\lam*\rho*((2/tanh(x)+tanh(x))*cos(deg(y)) + sqrt(tanh(x)^2*cos(deg(y))^2 + 1/\lam^2 * (1/\rho^2-1)))) / sqrt( (2*\lam*\rho*sin(deg(y)))^2/\boxw^2 + (\lam*\rho*((2/tanh(x)+tanh(x))*cos(deg(y)) + sqrt(tanh(x)^2*cos(deg(y))^2 + 1/\lam^2 * (1/\rho^2-1))))^2/\boxh^2 )},
        scale arrows = 0.05,
    },
    -stealth,
    domain = 1/8:\boxw,
    samples = 10,
    domain y = 0:\boxh,
    lightgray] 
{0};

%


\addplot[color=cyan,smooth,tension=0.7,thick] table [x index=0,y index=1,col sep=comma] {anc/PW_deltapositive_rhosupercritical.csv};
\addplot[cyan, quiver={
        u = { 2*\lam*\rho*sin(deg(y))) / sqrt( (2*\lam*\rho*sin(deg(y)))^2/(\boxw^2) + (\lam*\rho*((2/tanh(x)+tanh(x))*cos(deg(y)) + sqrt(tanh(x)^2*cos(deg(y))^2 + 1/\lam^2 * (1/\rho^2-1))))^2/\boxh^2 ) },
        v = {(\lam*\rho*((2/tanh(x)+tanh(x))*cos(deg(y)) + sqrt(tanh(x)^2*cos(deg(y))^2 + 1/\lam^2 * (1/\rho^2-1)))) / sqrt( (2*\lam*\rho*sin(deg(y)))^2/(\boxw^2) + (\lam*\rho*((2/tanh(x)+tanh(x))*cos(deg(y)) + sqrt(tanh(x)^2*cos(deg(y))^2 + 1/\lam^2 * (1/\rho^2-1))))^2/\boxh^2 ) },
        scale arrows = 0.01
      }, -stealth, very thick] 
      table[
        x index=0,
        y index=1,
        col sep=comma,
      ] {anc/PW_deltapositive_rhosupercritical_arrows.csv};

\addplot[color=cyan,smooth,tension=0.7,thick] table [x index=2,y index=3,col sep=comma] {anc/PW_deltapositive_rhosupercritical.csv};
\addplot[cyan, quiver={
        u = { 2*\lam*\rho*sin(deg(y))) / sqrt( (2*\lam*\rho*sin(deg(y)))^2/(\boxw^2) + (\lam*\rho*((2/tanh(x)+tanh(x))*cos(deg(y)) + sqrt(tanh(x)^2*cos(deg(y))^2 + 1/\lam^2 * (1/\rho^2-1))))^2/\boxh^2 ) },
        v = {(\lam*\rho*((2/tanh(x)+tanh(x))*cos(deg(y)) + sqrt(tanh(x)^2*cos(deg(y))^2 + 1/\lam^2 * (1/\rho^2-1)))) / sqrt( (2*\lam*\rho*sin(deg(y)))^2/(\boxw^2) + (\lam*\rho*((2/tanh(x)+tanh(x))*cos(deg(y)) + sqrt(tanh(x)^2*cos(deg(y))^2 + 1/\lam^2 * (1/\rho^2-1))))^2/\boxh^2 ) },
        scale arrows = 0.01
      }, -stealth, very thick] 
      table[
        x index=2,
        y index=3,
        col sep=comma,
      ] {anc/PW_deltapositive_rhosupercritical_arrows.csv};

\addplot[color=teal,smooth,tension=0.7,thick] table [x index=4,y index=5,col sep=comma] {anc/PW_deltapositive_rhosupercritical.csv};
\addplot[teal, quiver={
        u = { 2*\lam*\rho*sin(deg(y))) / sqrt( (2*\lam*\rho*sin(deg(y)))^2/(\boxw^2) + (\lam*\rho*((2/tanh(x)+tanh(x))*cos(deg(y)) + sqrt(tanh(x)^2*cos(deg(y))^2 + 1/\lam^2 * (1/\rho^2-1))))^2/\boxh^2 ) },
        v = {(\lam*\rho*((2/tanh(x)+tanh(x))*cos(deg(y)) + sqrt(tanh(x)^2*cos(deg(y))^2 + 1/\lam^2 * (1/\rho^2-1)))) / sqrt( (2*\lam*\rho*sin(deg(y)))^2/(\boxw^2) + (\lam*\rho*((2/tanh(x)+tanh(x))*cos(deg(y)) + sqrt(tanh(x)^2*cos(deg(y))^2 + 1/\lam^2 * (1/\rho^2-1))))^2/\boxh^2 ) },
        scale arrows = 0.01
      }, -stealth, very thick] 
      table[
        x index=4,
        y index=5,
        col sep=comma,
      ] {anc/PW_deltapositive_rhosupercritical_arrows.csv};

\addplot[color=teal,smooth,tension=0.7,thick] table [x index=6,y index=7,col sep=comma] {anc/PW_deltapositive_rhosupercritical.csv};
\addplot[teal, quiver={
        u = { 2*\lam*\rho*sin(deg(y))) / sqrt( (2*\lam*\rho*sin(deg(y)))^2/(\boxw^2) + (\lam*\rho*((2/tanh(x)+tanh(x))*cos(deg(y)) + sqrt(tanh(x)^2*cos(deg(y))^2 + 1/\lam^2 * (1/\rho^2-1))))^2/\boxh^2 ) },
        v = {(\lam*\rho*((2/tanh(x)+tanh(x))*cos(deg(y)) + sqrt(tanh(x)^2*cos(deg(y))^2 + 1/\lam^2 * (1/\rho^2-1)))) / sqrt( (2*\lam*\rho*sin(deg(y)))^2/(\boxw^2) + (\lam*\rho*((2/tanh(x)+tanh(x))*cos(deg(y)) + sqrt(tanh(x)^2*cos(deg(y))^2 + 1/\lam^2 * (1/\rho^2-1))))^2/\boxh^2 ) },
        scale arrows = 0.01
      }, -stealth, very thick] 
      table[
        x index=6,
        y index=7,
        col sep=comma,
      ] {anc/PW_deltapositive_rhosupercritical_arrows.csv};

\addplot[color=teal,smooth,tension=0.7,thick] table [x index=8,y index=9,col sep=comma] {anc/PW_deltapositive_rhosupercritical.csv};
\addplot[teal, quiver={
        u = { 2*\lam*\rho*sin(deg(y))) / sqrt( (2*\lam*\rho*sin(deg(y)))^2/(\boxw^2) + (\lam*\rho*((2/tanh(x)+tanh(x))*cos(deg(y)) + sqrt(tanh(x)^2*cos(deg(y))^2 + 1/\lam^2 * (1/\rho^2-1))))^2/\boxh^2 ) },
        v = {(\lam*\rho*((2/tanh(x)+tanh(x))*cos(deg(y)) + sqrt(tanh(x)^2*cos(deg(y))^2 + 1/\lam^2 * (1/\rho^2-1)))) / sqrt( (2*\lam*\rho*sin(deg(y)))^2/(\boxw^2) + (\lam*\rho*((2/tanh(x)+tanh(x))*cos(deg(y)) + sqrt(tanh(x)^2*cos(deg(y))^2 + 1/\lam^2 * (1/\rho^2-1))))^2/\boxh^2 ) },
        scale arrows = 0.01
      }, -stealth, very thick] 
      table[
        x index=8,
        y index=9,
        col sep=comma,
      ] {anc/PW_deltapositive_rhosupercritical_arrows.csv};

\addplot[color=teal,smooth,tension=0.7,thick] table [x index=10,y index=11,col sep=comma] {anc/PW_deltapositive_rhosupercritical.csv};
\addplot[teal, quiver={
        u = { 2*\lam*\rho*sin(deg(y))) / sqrt( (2*\lam*\rho*sin(deg(y)))^2/(\boxw^2) + (\lam*\rho*((2/tanh(x)+tanh(x))*cos(deg(y)) + sqrt(tanh(x)^2*cos(deg(y))^2 + 1/\lam^2 * (1/\rho^2-1))))^2/\boxh^2 ) },
        v = {(\lam*\rho*((2/tanh(x)+tanh(x))*cos(deg(y)) + sqrt(tanh(x)^2*cos(deg(y))^2 + 1/\lam^2 * (1/\rho^2-1)))) / sqrt( (2*\lam*\rho*sin(deg(y)))^2/(\boxw^2) + (\lam*\rho*((2/tanh(x)+tanh(x))*cos(deg(y)) + sqrt(tanh(x)^2*cos(deg(y))^2 + 1/\lam^2 * (1/\rho^2-1))))^2/\boxh^2 ) },
        scale arrows = 0.01
      }, -stealth, very thick] 
      table[
        x index=10,
        y index=11,
        col sep=comma,
      ] {anc/PW_deltapositive_rhosupercritical_arrows.csv};

\addplot[color=red,smooth,tension=0.7,thick] table [x index=12,y index=13,col sep=comma] {anc/PW_deltapositive_rhosupercritical.csv};
\addplot[red, quiver={
        u = { 2*\lam*\rho*sin(deg(y))) / sqrt( (2*\lam*\rho*sin(deg(y)))^2/(\boxw^2) + (\lam*\rho*((2/tanh(x)+tanh(x))*cos(deg(y)) + sqrt(tanh(x)^2*cos(deg(y))^2 + 1/\lam^2 * (1/\rho^2-1))))^2/\boxh^2 ) },
        v = {(\lam*\rho*((2/tanh(x)+tanh(x))*cos(deg(y)) + sqrt(tanh(x)^2*cos(deg(y))^2 + 1/\lam^2 * (1/\rho^2-1)))) / sqrt( (2*\lam*\rho*sin(deg(y)))^2/(\boxw^2) + (\lam*\rho*((2/tanh(x)+tanh(x))*cos(deg(y)) + sqrt(tanh(x)^2*cos(deg(y))^2 + 1/\lam^2 * (1/\rho^2-1))))^2/\boxh^2 ) },
        scale arrows = 0.01
      }, -stealth, very thick] 
      table[
        x index=12,
        y index=13,
        col sep=comma,
      ] {anc/PW_deltapositive_rhosupercritical_arrows.csv};

\addplot[color=red,smooth,tension=0.7,thick] table [x index=14,y index=15,col sep=comma] {anc/PW_deltapositive_rhosupercritical.csv};
\addplot[red, quiver={
        u = { 2*\lam*\rho*sin(deg(y))) / sqrt( (2*\lam*\rho*sin(deg(y)))^2/(\boxw^2) + (\lam*\rho*((2/tanh(x)+tanh(x))*cos(deg(y)) + sqrt(tanh(x)^2*cos(deg(y))^2 + 1/\lam^2 * (1/\rho^2-1))))^2/\boxh^2 ) },
        v = {(\lam*\rho*((2/tanh(x)+tanh(x))*cos(deg(y)) + sqrt(tanh(x)^2*cos(deg(y))^2 + 1/\lam^2 * (1/\rho^2-1)))) / sqrt( (2*\lam*\rho*sin(deg(y)))^2/(\boxw^2) + (\lam*\rho*((2/tanh(x)+tanh(x))*cos(deg(y)) + sqrt(tanh(x)^2*cos(deg(y))^2 + 1/\lam^2 * (1/\rho^2-1))))^2/\boxh^2 ) },
        scale arrows = 0.01
      }, -stealth, very thick] 
      table[
        x index=14,
        y index=15,
        col sep=comma,
      ] {anc/PW_deltapositive_rhosupercritical_arrows.csv};

\end{axis}
\end{tikzpicture}
}
\qquad
\subfloat[$\rho_0 < \rho_\crit$]{
\begin{tikzpicture}

\pgfmathsetmacro{\lam}{1}
\pgfmathsetmacro{\rho}{1/(sqrt(4*\lam^2*(1/tanh(0.3)^2+1)+1))}
\pgfmathsetmacro{\boxw}{0.9}
\pgfmathsetmacro{\boxh}{2*pi}

\begin{axis}
[
    xmin = 0, xmax = \boxw,
    ymin = 0, ymax = \boxh,
    width = 0.45\textwidth,
    height = 0.45\textwidth,
    xlabel = {$P$},
    ylabel = {$W$},
    xtick = {0, 0.3, 1},
    xticklabels= {0, $P_\infty$, },
    ytick = {-2*pi,-3*pi/2,-pi,-pi/2,0,pi/2,pi,3*pi/2,2*pi},
    yticklabels = {$-2\pi$, $-\frac{3\pi}{2}$, $-\pi$, $-\frac{\pi}{2}$, $0$, $\frac{\pi}{2}$, $\pi$, $\frac{3\pi}{2}$, $2\pi$},
    view = {0}{90}
]

\addplot3[
    quiver = {
        u = { 2*\lam*\rho*sin(deg(y))) / sqrt( (2*\lam*\rho*sin(deg(y)))^2/(\boxw^2) + (\lam*\rho*((2/tanh(x)+tanh(x))*cos(deg(y)) + sqrt(tanh(x)^2*cos(deg(y))^2 + 1/\lam^2 * (1/\rho^2-1))))^2/\boxh^2 ) },
        v = {(\lam*\rho*((2/tanh(x)+tanh(x))*cos(deg(y)) + sqrt(tanh(x)^2*cos(deg(y))^2 + 1/\lam^2 * (1/\rho^2-1)))) / sqrt( (2*\lam*\rho*sin(deg(y)))^2/(\boxw^2) + (\lam*\rho*((2/tanh(x)+tanh(x))*cos(deg(y)) + sqrt(tanh(x)^2*cos(deg(y))^2 + 1/\lam^2 * (1/\rho^2-1))))^2/\boxh^2 ) },
        scale arrows = 0.05
    },
    -stealth,
    domain = 1/30:\boxw,
    samples = 10,
    domain y = 0:\boxh,
    lightgray] 
{0};

%


\filldraw ({artanh(1/sqrt(1 + 1/(4*\lam^2)*(1/\rho^2 - (1+8*\lam^2))))}, pi) circle (2pt);
\filldraw ({artanh(1/sqrt(1 + 1/(4*\lam^2)*(1/\rho^2 - (1+8*\lam^2))))}, -pi) circle (2pt);

\addplot[color=cyan,smooth,tension=0.7,thick] table [x index=0,y index=1,col sep=comma] {anc/PW_deltapositive_rhosubcritical.csv};
\addplot[cyan, quiver={
        u = { 2*\lam*\rho*sin(deg(y))) / sqrt( (2*\lam*\rho*sin(deg(y)))^2/(\boxw^2) + (\lam*\rho*((2/tanh(x)+tanh(x))*cos(deg(y)) + sqrt(tanh(x)^2*cos(deg(y))^2 + 1/\lam^2 * (1/\rho^2-1))))^2/\boxh^2 ) },
        v = {(\lam*\rho*((2/tanh(x)+tanh(x))*cos(deg(y)) + sqrt(tanh(x)^2*cos(deg(y))^2 + 1/\lam^2 * (1/\rho^2-1)))) / sqrt( (2*\lam*\rho*sin(deg(y)))^2/(\boxw^2) + (\lam*\rho*((2/tanh(x)+tanh(x))*cos(deg(y)) + sqrt(tanh(x)^2*cos(deg(y))^2 + 1/\lam^2 * (1/\rho^2-1))))^2/\boxh^2 ) },
        scale arrows = 0.01
      }, -stealth, very thick] 
      table[
        x index=0,
        y index=1,
        col sep=comma,
      ] {anc/PW_deltapositive_rhosubcritical_arrows.csv};

\addplot[color=cyan,smooth,tension=0.7,thick] table [x index=2,y index=3,col sep=comma] {anc/PW_deltapositive_rhosubcritical.csv};
\addplot[cyan, quiver={
        u = { 2*\lam*\rho*sin(deg(y))) / sqrt( (2*\lam*\rho*sin(deg(y)))^2/(\boxw^2) + (\lam*\rho*((2/tanh(x)+tanh(x))*cos(deg(y)) + sqrt(tanh(x)^2*cos(deg(y))^2 + 1/\lam^2 * (1/\rho^2-1))))^2/\boxh^2 ) },
        v = {(\lam*\rho*((2/tanh(x)+tanh(x))*cos(deg(y)) + sqrt(tanh(x)^2*cos(deg(y))^2 + 1/\lam^2 * (1/\rho^2-1)))) / sqrt( (2*\lam*\rho*sin(deg(y)))^2/(\boxw^2) + (\lam*\rho*((2/tanh(x)+tanh(x))*cos(deg(y)) + sqrt(tanh(x)^2*cos(deg(y))^2 + 1/\lam^2 * (1/\rho^2-1))))^2/\boxh^2 ) },
        scale arrows = 0.01
      }, -stealth, very thick] 
      table[
        x index=2,
        y index=3,
        col sep=comma,
      ] {anc/PW_deltapositive_rhosubcritical_arrows.csv};

\addplot[color=teal,smooth,tension=0.7,thick] table [x index=4,y index=5,col sep=comma] {anc/PW_deltapositive_rhosubcritical.csv};
\addplot[teal, quiver={
        u = { 2*\lam*\rho*sin(deg(y))) / sqrt( (2*\lam*\rho*sin(deg(y)))^2/(\boxw^2) + (\lam*\rho*((2/tanh(x)+tanh(x))*cos(deg(y)) + sqrt(tanh(x)^2*cos(deg(y))^2 + 1/\lam^2 * (1/\rho^2-1))))^2/\boxh^2 ) },
        v = {(\lam*\rho*((2/tanh(x)+tanh(x))*cos(deg(y)) + sqrt(tanh(x)^2*cos(deg(y))^2 + 1/\lam^2 * (1/\rho^2-1)))) / sqrt( (2*\lam*\rho*sin(deg(y)))^2/(\boxw^2) + (\lam*\rho*((2/tanh(x)+tanh(x))*cos(deg(y)) + sqrt(tanh(x)^2*cos(deg(y))^2 + 1/\lam^2 * (1/\rho^2-1))))^2/\boxh^2 ) },
        scale arrows = 0.01
      }, -stealth, very thick] 
      table[
        x index=4,
        y index=5,
        col sep=comma,
      ] {anc/PW_deltapositive_rhosubcritical_arrows.csv};

\addplot[color=teal,smooth,tension=0.7,thick] table [x index=6,y index=7,col sep=comma] {anc/PW_deltapositive_rhosubcritical.csv};
\addplot[teal, quiver={
        u = { 2*\lam*\rho*sin(deg(y))) / sqrt( (2*\lam*\rho*sin(deg(y)))^2/(\boxw^2) + (\lam*\rho*((2/tanh(x)+tanh(x))*cos(deg(y)) + sqrt(tanh(x)^2*cos(deg(y))^2 + 1/\lam^2 * (1/\rho^2-1))))^2/\boxh^2 ) },
        v = {(\lam*\rho*((2/tanh(x)+tanh(x))*cos(deg(y)) + sqrt(tanh(x)^2*cos(deg(y))^2 + 1/\lam^2 * (1/\rho^2-1)))) / sqrt( (2*\lam*\rho*sin(deg(y)))^2/(\boxw^2) + (\lam*\rho*((2/tanh(x)+tanh(x))*cos(deg(y)) + sqrt(tanh(x)^2*cos(deg(y))^2 + 1/\lam^2 * (1/\rho^2-1))))^2/\boxh^2 ) },
        scale arrows = 0.01
      }, -stealth, very thick] 
      table[
        x index=6,
        y index=7,
        col sep=comma,
      ] {anc/PW_deltapositive_rhosubcritical_arrows.csv};

\addplot[color=red,smooth,tension=0.7,thick] table [x index=8,y index=9,col sep=comma] {anc/PW_deltapositive_rhosubcritical.csv};
\addplot[red, quiver={
        u = { 2*\lam*\rho*sin(deg(y))) / sqrt( (2*\lam*\rho*sin(deg(y)))^2/(\boxw^2) + (\lam*\rho*((2/tanh(x)+tanh(x))*cos(deg(y)) + sqrt(tanh(x)^2*cos(deg(y))^2 + 1/\lam^2 * (1/\rho^2-1))))^2/\boxh^2 ) },
        v = {(\lam*\rho*((2/tanh(x)+tanh(x))*cos(deg(y)) + sqrt(tanh(x)^2*cos(deg(y))^2 + 1/\lam^2 * (1/\rho^2-1)))) / sqrt( (2*\lam*\rho*sin(deg(y)))^2/(\boxw^2) + (\lam*\rho*((2/tanh(x)+tanh(x))*cos(deg(y)) + sqrt(tanh(x)^2*cos(deg(y))^2 + 1/\lam^2 * (1/\rho^2-1))))^2/\boxh^2 ) },
        scale arrows = 0.01
      }, -stealth, very thick] 
      table[
        x index=8,
        y index=9,
        col sep=comma,
      ] {anc/PW_deltapositive_rhosubcritical_arrows.csv};

\end{axis}
\end{tikzpicture}
}
}
\caption{Phase portrait of \eqref{eq-P-W-system-delta0>0-rho0<1}.}
\label{fig-P-W-phase-delta0>0}
\end{figure}

We now let $\rho_\crit = (1+8\lambda^2)^{-\frac12}$ and consider two cases separately (cf.\ also Figure \ref{fig-P-W-phase-delta0>0}).

\begin{enumerate}[itemsep=0.1cm]
\item If $\rho_\crit \leq \rho_0 < 1$, then the system has no fixed points with $P>0$. 
Consider the unstable manifold $\mathscr{U}_+$ emanating from $(0,\pi/2)$. Note that it stays in the region $W > \pi/2$ in the forward direction, since $W' = \lambda\rho_0\beta > 0$ whenever $W = \frac{\pi}{2}$. 
In view of the symmetry $(s,W) \mapsto (-s, 2\pi - W)$, the stable manifold $\mathscr{S}_+$ must then stay in the region $W < 3\pi/2$.
If $\mathscr{S}_+$ and $\mathscr{U}_+$ intersect, then they must do so at $W=\pi$ again by the reflection symmetry, and in this case they would necessarily coincide as sets by the autonomy of the system. However, this curve together with the axis $P=0$ would then enclose a compact set, and in particular the vector field $X$ restricted to this set would necessarily have a zero with $P>0$, by a combination of the Poincaré-Bendixson and the Brouwer fixed point theorem, which is impossible for $\rho_0 \geq \rho_\crit$. 
It follows that $\mathscr{S}_+$ and $\mathscr{U}_+$ do not intersect, and in fact $\mathscr{U}_+$ stays (in the forward direction) in the region $\pi/2 < W < \pi$ while $\mathscr{S}_+$ stays (in the backward direction) in the region $\pi < W < 3\pi/2$.

As for the other orbits, none of them can intersect $\mathscr{S}_+$ or $\mathscr{U}_+$, nor can they have $P\to0$.
It follows that orbits with $\pi/2 < W_0 < 3\pi/2$ lying between $\mathscr{S}_+$ and $\mathscr{U}_+$ must all intersect $W=\pi$ at some point, while orbits with $0 < W_0 < \pi/2$ lying below $\mathscr{U}_+$ resp.\ orbits with $3\pi/2 < W_0 < 2\pi$ lying above $\mathscr{S}_+$ must intersect the line $W=2\pi k$ for some integer $k$.
In particular, we can without loss of generality (up to translation of $s$ and $W$ mod $2\pi$) assume that $W_0 \in \{0,\pi\}$.
By the reflection symmetry it also suffices to study them only in the forward direction.
In fact, $W' < 0$ at $W = 0$ and $W' > 0$ at $W = \pi$, so that all such orbits must stay in the region $0 < W < \pi$ for $s > 0$.
In this region $P' > 0$, so that necessarily $P \to \infty$ as the orbit cannot stop existing at any given finite $P$ since $W$ is bounded and there are no singularities for $P>0$. For large $P$, both $\coth P$ and $\tanh P$ are close to 1, so that $W$ is well-approximated by the solutions from the $\delta_0=0$ case. In particular, a similar analysis as in \ref{subsubsec-delta=0-constant-rho<1} shows that all forward orbits (including $\mathscr{U}_+$) asymptotically have $W \to W_\infty$ as $s\to\infty$, where $W_\infty$ is the same as in \eqref{eq-W-infty} for $\rho_0 > \rho_\crit$, and $W_\infty = \pi$ for $\rho_0 = \rho_\crit$.

\item If $\rho_0 < \rho_\crit$, the system has a fixed point at
\begin{equation}\label{eq-W-P-explicit-delta0>0-rho0<1}
    (P,W) = (P_\infty, \pi), \qquad 
    P_\infty = \arcoth\sqrt{1 + \frac{1}{4\lambda^2}\left(\frac{1}{\rho_0^2}-\frac{1}{\rho_\crit^2}\right)}.
\end{equation}
This provides an interesting Dirac-Yang-Mills pair which we will come back to later.

The aforementioned reflection symmetry of the system about $W = \pi$ maps $\mathscr{S}_+$ onto $\mathscr{U}_+$ and vise-versa. We now show that  $\mathscr{U}_+$ intersects $W = \pi$ in $(P_\crit, \pi)$ for some $P_\crit > P_\infty$ and hence $\mathscr{S}_+ = \mathscr{U}_+$.
To see this consider the orbit of \eqref{eq-P-W-system-delta0>0-rho0<1} with $P_0 > P_\infty$ and $0 < W_0 < \pi$. Then while $W < \pi$ we have $C_0 < W'$ for some constant $C_0 > 0$ depending on $P_0$ and $0 < P' < 2\lambda \rho_0$. Hence $W(s) = \pi$ for some finite $s>0$. 
Now $\mathscr{U}_+$ cannot intersect $W = \pi$ at a point with $P < P_\infty$ since at these points $P'<0$. Furthermore clearly $P_\crit \neq P_\infty$. Hence $\mathscr{U}_+$ must intersect the region $(P_\infty, \infty) \times (0,\pi)$. Thus, any orbit starting in $\mathscr{U}_+$ intersects $W = \pi$ and hence, so does $\mathscr{U}_+$.
A straightforward argument shows that any orbit of \eqref{eq-P-W-system-delta0>0-rho0<1} starting in $\mathscr{U}_+$ must hit $P = 0$ at finite $s$ in both the backward and forward direction.

The curve $\mathscr{U}_+$ divides the strip $\R_{P>0} \times [0,2\pi]$ into a bounded component $D_b$ containing $(0,P_\crit)\times \{\pi\}$ and an unbounded component $D_u$ containing $(P_\crit,\infty) \times \{\pi\}$. Any orbit starting in $D_u$ must be contained in $D_u$ and likewise for $D_b$.

For orbits starting in $D_u$, $P>0$ and, since $D_b$ contains the set of points with $W'\leq 0$, $W'(s)>C_0>0$ for all $s$ for some $C_0$ depending on $(P_0,W_0)$. It follows that $W(s) = \pi$ for some $s$. Assuming $W_0 = \pi$ a similar argument shows $W(s_0) =2\pi$ for some $s_0$. The reflection symmetry about $W = \pi$ then gives $W(s + T) = W(s) + 2\pi$ and $P(s+T) = P(s)$ with $T = 2s_0$ and for all $s \in \R$.

For an orbit $(P,W)$ starting in $D_b$ an application of the Poincaré-Bendixson Theorem and the reflection symmetry about $W = \pi$ gives that either $(P,W) \equiv (P_\infty,\pi)$ or $(P,W)$ is a periodic orbit circling $(P_\infty,\pi)$.

The associated Dirac-Yang-Mills pair $(z,\xi)$ is periodic if and only if
\begin{equation}
\label{eq-f1-f2-rational-periodic}
\begin{split}
    f_1(\rho_0,P_0) := \frac{\lambda \rho_0}{2\pi}\int_0^T \tanh\frac{P(t)}{2} \cos W(t) dt = \frac{p_1}{q_1},\\
    f_2(\rho_0,P_0) := \frac{\lambda \rho_0}{2\pi}\int_0^T \coth\frac{P(t)}{2} \cos W(t) dt = \frac{p_2}{q_2},
\end{split}
\end{equation}
for some rational numbers $\frac{p_1}{q_1}$ and $\frac{p_2}{q_2}$.
We shall show that this holds for countably many pairs $(\rho_0,P_0)$ with $0<\rho_0<\rho_\crit$ and $P_0 \neq P_{\infty}, P_\crit$.

We first show that, away from $P_0 \in \{P_{\infty}, P_\crit\}$, $f_1$ and $f_2$ depend continuously on $P_0$ and $\rho_0$. 
In the case $P_0 > P_\crit$ this follows by similar arguments as in the $\delta_0 = 0$ case. In the case $P_0<P_\crit$, we note that it is sufficient to show that the period $T = T(P_0,\rho_0)$ is continuous in $P_0$ and $\rho_0$. For fixed $0<\rho_0<\rho_\crit$ we denote by $y_{{P_0},\rho_0}(s) = (P_{{P_0},\rho_0}(s),W_{{P_0},\rho_0}(s))$ the orbit of the system \eqref{eq-P-W-system-delta0>0-rho0<1} with $P(0) = {P_0}$ and $W(0) = \pi$. For $0 < {P_0} < P_\crit$ we may without loss of generality take $0<{P_0}<P_\infty$. The orbit $y_{{P_0},\rho_0}$ intersects the line $W = \pi$ in exactly two points, one of which is $({P_0},\pi)$ and the other we denote by $(c({P_0},\rho_0),\pi)$.

An application of the principle of continuous dependence on parameters yields that $c({P_0},\rho_0)$ is continuous in ${P_0}$ and $\rho_0$ on $\{({P_0},\rho_0) \ | \ 0 < \rho_0 < \rho_\crit, 0 < {P_0} \leq P_\infty(\rho_0)\}$. Denote by $T({P_0},\rho_0)$ the minimal period of $(P_{{P_0},\rho_0}(s),W_{{P_0},\rho_0}(s))$. Then $\restr{y_{{P_0},\rho_0}}{(0,T({P_0},\rho_0))}$ is a homeomorphism onto its image and hence
\begin{equation*}
    (\restr{y_{{P_0},\rho_0}}{(0,T({P_0},\rho_0))})^{-1}(c({P_0},\rho_0),\pi) = \frac{T({P_0},\rho_0)}{2}    
\end{equation*}
depends continuously on ${P_0}$ and $\rho_0$.

For the periodicity conditions \eqref{eq-f1-f2-rational-periodic}, we will prove they are satisfied for countably many pairs $(P_0,\rho_0)$ with $P_0 \in D_u$. For the case $P_0 \in D_b$ the plots in Figure \ref{fig: periodicity in bounded case} together with continuity of $f_1$ and $f_2$ provides strong numerical evidence that the periodicity conditions are satisfied also for countably many pairs $(\rho_0,P_0)$ with $P_0 \in D_b$.

If $P_0 \in D_u$ we can without loss of generality instead take the inital values to be $W_0 = 0$ and $P_0 > 0$. In particular, along an orbit the minimum and maximum values of $P$ are $P_\mathrm{min} = P_0$ and $P_\mathrm{max} = P(\frac{T}{2}) > P_\crit$, respectively.

We have
\begin{equation*}
    f_2 - f_1 = \frac{\lambda \rho_0}{\pi} \int_0^T \csch P(t) \cos W(t)\, \diff t
\end{equation*}
so that
\begin{equation*}
    \frac{\lambda \rho_0}{\pi} \csch (P_0) \left|\int_0^T \cos W(t)\, \diff t \right| \leq  \lvert f_2 - f_1\rvert \leq \frac{\lambda \rho_0}{\pi} \csch (P_\mathrm{max}) \left|\int_0^T \cos W(t)\, \diff t \right|.
\end{equation*}
On the other hand, 
\begin{equation*}
    \frac{\lambda \rho_0}{2\pi} \int_0^T \cos W(t)\, \diff t \to \frac{\lambda \rho_0}{2\pi} \int_0^{\widetilde{T}} \cos \widetilde{W}(t)\, \diff t,
    \quad\text{as}\quad
    P_0 \to \infty,
\end{equation*}
where $\widetilde{W}(t)$ is the solution to \eqref{eq-constant-rho-W'} with $W_0 = 0$ and $\widetilde{T} = \widetilde{T}(\rho_0)$ the associated period.
Then $|f_2 -f_1| \to 0$ as $P_0 \to \infty$, while for each finite $P_0$ we have $|f_2 -f_1| > 0$.

Meanwhile $f_1$ (and hence also $f_2$) satsifies 
\begin{equation*}
    f_1(\rho_0,P_0) \to  \frac{\lambda\rho_0}{2\pi} \int_{0}^{\widetilde{T}} \cos \widetilde{W}(t) \, \diff t \quad\textrm{as}\quad P_0 \to \infty.
\end{equation*}
By the same arguments as in the $\delta_0 = 0$ case this limit is a continouos non-constant function of $\rho_0$. This is enough to conclude that the image of the set of admissible $(\rho_0,P_0)$ under the map $(f_1,f_2)$ contains some open subset of $\R^2$. Since $\Q^2$ is dense in $\R^2$ there are then countably many choices of $(\rho_0,P_0)$ for which $(f_1,f_2) \in \Q^2$.

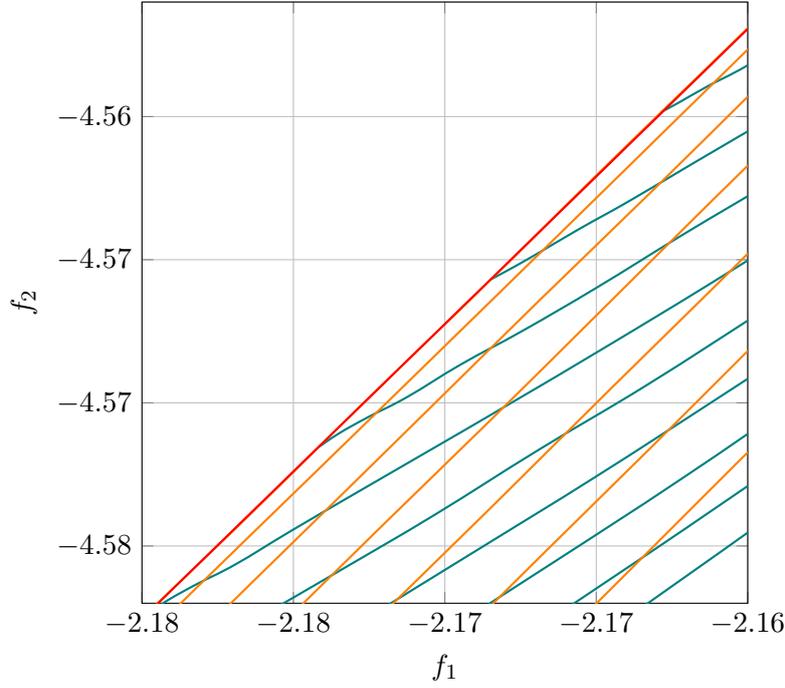
\begin{figure}[t]

\begin{tikzpicture}
\begin{axis}
[
    grid=both,
    xmin = -2.18, xmax =-2.160 ,
    ymin = -4.577, ymax = -4.556,
    width = 0.6\textwidth,
    height = 0.6\textwidth,
    xlabel = {$f_1$},
    ylabel = {$f_2$},
    xtick = {-2.18,-2.175,-2.170,-2.165,-2.16},
    view = {0}{90}
] 

\addplot[color=teal,smooth,tension=0.7,thick] table [x index=0,y index=1,col sep=comma] {anc/f_curves.csv};
\addplot[color=teal,smooth,tension=0.7,thick] table [x index=2,y index=3,col sep=comma] {anc/f_curves.csv};
\addplot[color=teal,smooth,tension=0.7,thick] table [x index=4,y index=5,col sep=comma] {anc/f_curves.csv};
\addplot[color=teal,smooth,tension=0.7,thick] table [x index=6,y index=7,col sep=comma] {anc/f_curves.csv};
\addplot[color=teal,smooth,tension=0.7,thick] table [x index=8,y index=9,col sep=comma] {anc/f_curves.csv};
\addplot[color=teal,smooth,tension=0.7,thick] table [x index=10,y index=11,col sep=comma] {anc/f_curves.csv};
\addplot[color=teal,smooth,tension=0.7,thick] table [x index=12,y index=13,col sep=comma] {anc/f_curves.csv};
\addplot[color=teal,smooth,tension=0.7,thick] table [x index=14,y index=15,col sep=comma] {anc/f_curves.csv};
\addplot[color=teal,smooth,tension=0.7,thick] table [x index=16,y index=17,col sep=comma] {anc/f_curves.csv};
\addplot[color=teal,smooth,tension=0.7,thick] table [x index=18,y index=19,col sep=comma] {anc/f_curves.csv};
\addplot[color=teal,smooth,tension=0.7,thick] table [x index=20,y index=21,col sep=comma] {anc/f_curves.csv};
\addplot[color=teal,smooth,tension=0.7,thick] table [x index=22,y index=23,col sep=comma] {anc/f_curves.csv};
\addplot[color=orange,smooth,tension=0.7,thick] table [x index=24,y index=25,col sep=comma] {anc/f_curves.csv};
\addplot[color=orange,smooth,tension=0.7,thick] table [x index=26,y index=27,col sep=comma] {anc/f_curves.csv};
\addplot[color=orange,smooth,tension=0.7,thick] table [x index=28,y index=29,col sep=comma] {anc/f_curves.csv};
\addplot[color=orange,smooth,tension=0.7,thick] table [x index=30,y index=31,col sep=comma] {anc/f_curves.csv};
\addplot[color=orange,smooth,tension=0.7,thick] table [x index=32,y index=33,col sep=comma] {anc/f_curves.csv};
\addplot[color=orange,smooth,tension=0.7,thick] table [x index=34,y index=35,col sep=comma] {anc/f_curves.csv};
\addplot[color=orange,smooth,tension=0.7,thick] table [x index=36,y index=37,col sep=comma] {anc/f_curves.csv};
\addplot[color=orange,smooth,tension=0.7,thick] table [x index=38,y index=39,col sep=comma] {anc/f_curves.csv};
\addplot[color=red,smooth,tension=0.7,thick] table [x index=40,y index=41,col sep=comma] {anc/f_curves.csv};
\end{axis}
\end{tikzpicture}
\caption{$f_1$ and $f_2$ as functions of $\rho_0$ for fixed $P_0$ (orange curves) and as functions of  $\rho_0$ for fixed $P_0$ (teal). The red line shows the limiting case $P_0 = P_\infty$ with the period equal to that of the linearized system.}
\label{fig: periodicity in bounded case}
\end{figure}

\end{enumerate}

Finally, we study the fixed point $(P,W) \equiv (P_\infty, \pi)$ more explicitly. In this case, we have $\arg\QuatIm(\xi_0) = \pi - \arg\QuatRe(\xi_0)$, and the associated Dirac-Yang-Mills pair is given by
\begin{align*}
    z(s) &= \rho_0 e^{-2i\lambda\rho_0 \coth P_0\, s},\\
     \QuatRe(\xi)(s) &= \delta_0\cosh \frac{P_0}{2} \,\exp\left( i\left[- \lambda \rho_0 \tanh \frac{P_0}{2} \, s + \arg\QuatRe(\xi_0)\right] \right),\\
    \QuatIm(\xi)(s) &= -\delta_0\sinh \frac{P_0}{2} \,\exp\left( i\left[ - \lambda \rho_0 \coth \frac{P_0}{2} \, s - \arg\QuatRe(\xi_0)\right] \right),
\end{align*}
with respect to the metric coefficient
\begin{equation*}
    r \equiv \frac{4\rho_0^3}{\delta_0^2} \sqrt{ \frac{1}{\rho_0^2} - \frac{1}{\rho_\crit^2}}.
\end{equation*}
Now $(z,\xi)$ will descend to a solution on $(\Sph^1 \times \Sph^2)_r$ if and only if $z$, $\QuatRe(\xi)$, and $\QuatIm(\xi)$ have a common period, which is the case if and only if there exists a non-zero real number $a \in \R$ such that 
\begin{equation*}
    2a\coth P_0, \; a\tanh\frac{P_0}{2}, \; a\coth\frac{P_0}{2} \in \mathbb{Z}.
\end{equation*}
Since $2\coth P_0 = \coth\frac{P_0}{2}+\tanh\frac{P_0}{2}$, it suffices to ensure that $a\tanh \frac{P_0}{2} = p$ and $a\coth \frac{P_0}{2} = q$ are integers, which we assume to be coprime in order to extract the minimal period.
This is the case if and only if $\tanh^2 \frac{P_0}{2} = \frac{p}{q}$,%
\footnote{This is clearly necessary since $ax = p$ and $a/x = q$ together imply that $x^2 = \frac{p}{q} \in \mathbb{Q}$. Conversely, if $x^2 = \frac{p}{q}$ for coprime integers $p$ and $q$, then it is sufficient to set $a = q x = p/x = \sqrt{pq}$, which recovers $ax = p$ and $a/x = q$. In fact, $a=\sqrt{pq}$ is then the smallest positive real number with the property that $ax$ and $a/x$ are coprime integers.}
and from \eqref{eq-W-P-explicit-delta0>0-rho0<1} we see that this is equivalent to
\begin{equation*}
    \frac{1}{\rho_0^2} - \frac{1}{\rho_\crit^2} = \frac{2\lambda^2}{3}\left( \frac{q}{p} - 3 \pm \frac12\sqrt{\frac{q^2}{p^2}-12} \right)
\end{equation*}
provided that $\frac{p}{q} \leq \frac{1}{2\sqrt{3}}$, and for the minus-branch we additionally need $\frac{p}{q} > \frac14$.
Thus we need
\begin{equation*}
    \rho_0 = \rho_0(\lambda,p/q) = \left( 1 + \frac{2\lambda^2}{3}\left( \frac{q}{p} + 9 \pm \frac12\sqrt{\frac{q^2}{p^2}-12} \right) \right)^{-\frac12}.
\end{equation*}
If we define the coordinate
\begin{equation*}
    t = \frac{\lambda\rho_0}{q \tanh \frac{P_0}{2}}\, s = \frac{\lambda\rho \tanh\frac{P_0}{2}}{p}\, s = \frac{\lambda\rho_0}{\sqrt{pq}}\, s,
\end{equation*}
then the solution can be written as
\begin{align*}
    z(t) &= \rho_0(\lambda,p/q) e^{-i(p+q)t}\\
     \QuatRe(\xi)(t) &= \delta_0\sqrt{\frac{q}{q-p}} \,e^{- ipt + i\arg\QuatRe(\xi_0)},\\
    \QuatIm(\xi)(t) &= -\delta_0\sqrt{\frac{p}{q-p}} \,e^{- iqt - i\arg\QuatRe(\xi_0)},
\end{align*}
with $t\mapsto e^{it}$ being the standard coordinate on $\Sph^1$, and we can view this as a solution on the closed Riemannian manifold
\begin{equation*}
    \Sph^1\left(\frac{2\sqrt{pq}}{\lambda\delta_0}\sqrt{1 - \frac{\rho_0^2}{\rho_\crit^2}} \right) \times \Sph^2\left( \frac{2\rho_0}{\delta_0}\sqrt{1 - \frac{\rho_0^2}{\rho_\crit^2}} \right).
\end{equation*}
Here, if desired, one can choose $\delta_0$ so that one of the sphere factors has unit radius.

\section{Solutions with constant \texorpdfstring{$W$}{W}}
\label{sec-constant-W}

We also consider solutions with constant $W \equiv W_0$, which display a different behaviour.

\subsection{\texorpdfstring{The case $\delta_0=0$}{The case delta0 = 0}}
\label{subsec-constant-W-delta=0}

In this case, we prove the following.

\begin{proposition}\label{prop-constant-W-delta=0}
    Using the notation of Proposition \ref{lem-polar-form}, let $\delta_0 = 0$ so that $\xi_0 = \QuatRe(\xi_0)(1+je^{iW_0})$, and assume $W \equiv W_0$ for a constant $W_0$. 
    Then without loss of generality $\pi/2 < W_0 \leq \pi$, and the DYM pair together with the corresponding metric coefficient $r$ is given by
    \begin{align*}
        z(s) &= \rho(s) \exp\left(2i \lambda\cos(W_0)\int_0^s \rho(t)\, \diff t\right), \\ 
        \xi(s) &= \xi_0 \exp\left(i\lambda e^{-iW_0} \int_0^s \rho(t)\, \diff t\right),\\
         r(s) &= - \frac{8\lambda\cos(W_0)}{|\xi_0|^2} \rho(s)^3 \exp\left(-2\lambda \sin(W_0) \int_0^s \rho(t)\,\diff t\right),
    \end{align*}
    where $\rho$ belongs one to the following classes (denoting $\rho_\crit = (1+8\lambda^2)^{-\frac12}$):
    \begin{enumerate}
        \item If $\rho_0 = \rho_\crit$ and $U_0=0$, then the solution has constant $\rho \equiv \rho_\crit$ and corresponds to the one already constructed in Proposition \ref{prop-constant-rho-delta=0}.
        \item There exist distinguished orbits such that $\rho \to \rho_\crit$ as $s \to \infty$ (resp.\ $s \to -\infty$) and $\rho \to 0$ exponentially as $s \to -\infty$ (resp.\ $s\to \infty$).
        \item The solutions with initial data $0 < \rho_0 < \rho_\crit$ and $U_0 = 0$ are global and have $\rho \to 0$ exponentially as $s\to\pm \infty$. In particular, $\rho$ is integrable so the DYM pair $(z,\xi)$ has finite limits as $s\to\pm\infty$ (in fact $z \to 0$), and the corresponding metric coefficient also has $r \to 0$ exponentially as $s \to \pm\infty$.
        \item All other orbits (i.e.\ not coinciding with any of the previous cases) have $\rho \to \infty$ at some finite $s$ (either forwards or backwards).
    \end{enumerate}
\end{proposition}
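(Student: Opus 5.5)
The plan is to use the constancy of $W$ to eliminate $r$ and $R$ from the polar system of Lemma \ref{lem-polar-form}, which leaves a single autonomous second-order equation for $\rho$. That equation becomes a planar saddle problem after a change of variables.

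\emph{Step 1: formulas for $R$, $r$, $z$, $\xi$.} With $W\equiv W_0$, equation \eqref{eq-W-polar-delta=0} gives $rR^2 = -4\lambda\rho^3\cos W_0$. Positivity of $r$ forces $\cos W_0<0$. Using the $2\pi$-translations and the reflection $(s,W)\mapsto -(s,W)$, I reduce to $\pi/2<W_0\le\pi$, so that $\sin W_0\ge 0$. Equation \eqref{eq-R-polar-delta=0} integrates to $R = \tfrac{1}{\sqrt2}|\xi_0|\exp(\lambda\sin W_0\int_0^s\rho)$. Substituting gives the stated $r$, and inserting $W\equiv W_0$ into the formulas of Lemma \ref{lem-polar-form} gives the stated $z$ and $\xi$. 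These follow directly once $\rho$ is known.

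\emph{Step 2: reduction to one autonomous ODE.} Using $rR^4/(4\rho^3) = -\lambda R^2\cos W_0$, equation \eqref{eq-H-polar-delta=0} becomes $H' + \rho(1-\rho^2)/r + 2\lambda R^2\cos W_0=0$. I combine this with $\rho'=rH$ and
\[
r'/r = 3\rho'/\rho - 2\lambda\sin W_0\,\rho.
\]
This yields the autonomous equation
\[
\rho'' - \frac{3\rho'^2}{\rho} + 2\lambda\sin W_0\,\rho\rho' + \rho - c\,\rho^3 = 0, \qquad c = 1+8\lambda^2\cos^2W_0 .
\]
For $W_0=\pi$ one has $c=\rho_\crit^{-2}$. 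For general $W_0$ the analysis goes through verbatim with $\rho_\crit$ replaced by $c^{-1/2}$; I will check how the statement's normalisation matches this, and otherwise read $\rho_\crit$ as $c^{-1/2}$. Next I substitute $u=\rho^{-2}$. A short computation turns the equation into the damped oscillator
\[
u'' + 2\lambda\sin W_0\, u^{-1/2}\, u' - 2(u-c) = 0.
\]
Its unique equilibrium $u=c$ is a hyperbolic saddle with eigenvalues of opposite sign. The energy $E = \tfrac12 u'^2 - (u-c)^2$ satisfies $E' = -2\lambda\sin W_0\,u^{-1/2}u'^2\le 0$.

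\emph{Step 3: phase-plane classification in $(u,u')$, $u>0$.} The four cases come out as follows.
\begin{enumerate}
\item The equilibrium $u\equiv c$ with $u'=0$, i.e.\ $U_0=0$ and $\rho_0=c^{-1/2}$, recovers the constant-$\rho$ solution of Proposition \ref{prop-constant-rho-delta=0}.
\item The stable and unstable manifolds of the saddle on the side $u>c$ give orbits with $\rho\to\rho_\crit$ at one end. At the other end, $u''\ge 2(u-c)$ forces exponential growth of $u$, hence $\rho\to0$ exponentially.
\item If $u'(0)=0$ and $u_0>c$, then $u''>0$ while $u>c$. So $u$ is convex with minimum at $s=0$ and stays above $u_0$, whence $u''\ge 2(u_0-c)$ and $u\to\infty$ at both ends. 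Comparing with $v''=2(v-c)$ gives $u\gtrsim e^{\sqrt2|s|}$. Thus $\rho$ decays exponentially and is integrable, so $z,\xi$ have limits with $z\to0$, and $r\sim\rho^3e^{O(1)}\to0$ exponentially.
\item All remaining orbits cross into the region where $u$ reaches $0$ in finite $s$, meaning $\rho\to\infty$.
\end{enumerate}

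\emph{Main obstacle.} The hard part is case (iv): showing that every orbit not on the stable or unstable manifolds, and not of the type in (iii), actually reaches $u=0$ in finite time rather than escaping to $u\to\infty$ at both ends. Here the damping coefficient $u^{-1/2}$ blows up near $u=0$, and the dissipative energy argument has to be made in one time direction at a time. My first attempt would be to work in the direction in which $E$ is non-decreasing (backward time). There I would show that orbits on the $u<c$ side of the saddle separatrices have $u''\le -2(c-u)<0$ and bounded-below $|u'|$, forcing $u\to0$ in finite time. Then I would check that the damping cannot stop this, since the damping term has the sign of $u'$, and near $u=0$ it only accelerates the collapse in the relevant direction.
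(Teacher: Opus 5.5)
Your Steps 1--2 are the paper's reduction: the same equation for $u=x=\rho^{-2}$ and the same saddle at $u=c$. You are also right that for $\pi/2<W_0<\pi$ the role of $\rho_\crit$ is played by $c^{-1/2}=(1+8\lambda^2\cos^2W_0)^{-1/2}$. The paper's proof in fact works with $\alpha=c$ and identifies $u\equiv c$ with the constant-$W$ solutions of Proposition~\ref{prop-constant-rho-delta=0} (iii)--(iv).

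\textbf{The gap is in (iv).} Your backward-time argument only covers orbits that are somewhere in $\{u<c\}$ with $u'\ge0$. But an open set of orbits reaches $u=0$ only forward in time:
\begin{itemize}
\item the unstable branch on the side $u<c$, which tends to the saddle backwards;
\item every orbit between that branch and the stable branch on $u>c$; these are backward global with $u\to\infty$.
\end{itemize}
For these you must work forward with $u'<0$. There the term $-2\beta u^{-1/2}u'$ (with $\beta=\lambda\sin W_0$) is positive, so it \emph{decelerates} the collapse, and its coefficient blows up at $u=0$. Your claim that the damping ``only accelerates the collapse'' is false exactly there, and a priori $u$ could reach $0$ only as $s\to\infty$.

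The missing observation is that the damping is an exact derivative, $2\beta u^{-1/2}u'=4\beta(\sqrt u)'$, so
\[
\bigl(u'+4\beta\sqrt{u}\bigr)'=2(u-c).
\]
Suppose $u(s_1)=u_*<c$ and $u'(s_1)\le0$.
\begin{itemize}
\item Then $u'<0$ for $s>s_1$, since a zero of $u'$ inside $\{u<c\}$ forces $u''<0$. Hence $u\le u_*$.
\item The identity then gives $u'\le u'+4\beta\sqrt u\le C-2(c-u_*)(s-s_1)$.
\item So $u$ reaches $0$, i.e.\ $\rho\to\infty$, at finite $s$.
\end{itemize}
Combine this with your backward argument. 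Also note that an orbit remaining in $\{u\ge c\}$ is either the equilibrium, a separatrix branch on $u>c$, or crosses $u'=0$ at some $u>c$ (case (iii) after translating $s$). Together these prove (iv). The paper itself only asserts this step (``can be shown''), so it has to be supplied in any case.

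\textbf{The same sign issue affects (ii) and (iii).} For $\beta>0$, the inequality $u''=2(u-c)-2\beta u^{-1/2}u'\ge2(u-c)$ holds only where $u'\le0$. So ``$u''>0$ while $u>c$'', ``$u''\ge 2(u_0-c)$'' and the comparison with $v''=2(v-c)$ are justified for $s\le0$ in (iii) and along the stable branch in (ii). They are not justified for $s>0$ in (iii) or along the unstable branch in (ii). There $u''<2(u-c)$, and in fact $u''(s)<2(u_0-c)$ for small $s>0$.

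The conclusions survive with tools you already have:
\begin{itemize}
\item $u'$ cannot vanish while $u>c$.
\item With $u$ increasing and $u>c$, the identity above makes $u'+4\beta\sqrt u$ grow at least linearly. So $u$ is unbounded, hence $u\to\infty$.
\item $E'\le0$, with $E\le0$ at $s=0$ in (iii) and in the limit $s\to-\infty$ on the unstable branch, gives $0<u'\le\sqrt2\,(u-c)$.
\item So the damping term is $O(\sqrt u)=o(u)$, and $u\gtrsim e^{\sqrt{2(1-\epsilon)}\,s}$, which is the paper's bound.
\end{itemize}
The set $\{E\le0,\ u>c,\ u'\ge0\}$ is exactly the paper's forward-invariant region $D_+$ with $\nu_+=\sqrt2$. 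Your energy is thus a Lyapunov-function version of the paper's normal-vector computation along $L_{\nu_+}$.
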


\begin{remark}
    Here, (iii) is the arguably most interesting case, since it has $r \to 0$ exponentially while the DYM pair has finite limits. This suggests that these solutions might be extendible to global solutions on a closed manifold topologically equivalent to $\Sph^3$.
\end{remark}

To prove the result, assume $W \equiv W_0$, so that \eqref{eq-W-polar-delta=0} reads
\begin{equation}\label{eq-W-fixed-W}
    \frac{rR^2}{4\rho^3} = - \lambda \cos(W_0),
\end{equation}
which forces $\cos(W_0) < 0$, i.e.\ $\pi/2 < |W_0| \leq \pi$ (as we are excluding the case $R_0=0$).
We also see from \eqref{eq-R-polar-delta=0} that
\begin{equation*}
    R(s) = \frac{1}{\sqrt{2}}|\xi_0|\exp\left(\lambda \sin(W_0) \int_0^s \rho\right),
\end{equation*}
so that \eqref{eq-W-fixed-W} implies $r$ and $\rho$ are related by
\begin{equation}\label{eq-r-constant-W-delta0=0}
    r(s) = - \frac{8\lambda\cos(W_0)}{|\xi_0|^2} \rho(s)^3 \exp\left(-2\lambda \sin(W_0) \int_0^s \rho\right)
\end{equation}
On the other hand, by (\ref{eq-rho-polar-delta=0}, \ref{eq-H-polar-delta=0}) we get
\begin{align}
    \nonumber
    0 &= \rho'' - \frac{r'}{r} \rho' + \rho(1-\rho^2) + 2\lambda\cos(W_0) rR^2 \\
    \label{eq-rho-delta=0-constant-W}
    &= \rho'' - \left(\frac{3\rho'}{\rho} - 2\lambda\sin(W_0) \rho \right) \rho' + \rho\left[1-(1+8\lambda^2\cos^2(W_0))\rho^2\right],
\end{align}
which is now an autonomous second order equation for $\rho$.
The associated Dirac-Yang-Mills pair can then be expressed as
\begin{equation*}
    z(s) = \rho(s) \exp\left(2i \lambda\cos(W_0)\int_0^s \rho(t)\, \diff t\right), \qquad 
    \xi(s) = \xi_0 \exp\left(i\lambda e^{-iW_0} \int_0^s \rho(t)\, \diff t\right),
\end{equation*}
so it suffices to study the equation \eqref{eq-rho-delta=0-constant-W} for $\rho$.
We can simplify the equation by letting $x = \rho^{-2}$, $\beta = \lambda\sin(W_0)$, and $\alpha = 1+8\lambda^2\cos^2(W_0) = 1+8(\lambda^2-\beta^2) > 1$, so that
\begin{equation}\label{eq-beta-delta=0}
    x'' + 2\beta x^{-\frac12}x' - 2(x - \alpha) = 0.
\end{equation}
Observe that this has a constant solution $x \equiv \alpha$ for any choice of $W_0$ satisfying $\pi/2 < |W_0| \leq \pi$. This solution corresponds to the one from Proposition \ref{prop-constant-rho-delta=0} (iii--iv).
Furthermore, we note that reversing the coordinate $s \mapsto -s$ in \eqref{eq-beta-delta=0} has the same effect as reversing the sign of $W_0$. Therefore it suffices to consider initial data with $\pi/2 < W_0 \leq \pi$.

\medskip

We begin by noting that if $W_0=\pi$, we have $\beta=0$ and $\alpha=1+8\lambda^2$, and the equation \eqref{eq-beta-delta=0} can be solved explicitly to get
\begin{equation*}
    x(s) = (x_0-\alpha) \cosh(\sqrt{2}s) + \frac{x_0'}{\sqrt{2}} \sinh(\sqrt{2}s) + \alpha
\end{equation*}
or
\begin{equation*}
    \rho(s) = \rho_0\left[\left(1-\frac{\rho_0^2}{\rho_\crit^2}\right) \cosh(\sqrt{2}s) - \frac{r_0U_0}{\sqrt{2}\rho_0} \sinh(\sqrt{2}s) + \frac{\rho_0^2}{\rho_\crit^2}\right]^{-\frac12},
\end{equation*}
where we set $\rho_\crit = (1+8\lambda^2)^{-\frac12}$ and we recall that $\rho_0'=r_0U_0$, cf.\ \S \ref{subsec-initial-invariances}.
It is easy to see that $\rho$ is globally defined if and only if
\begin{equation*}
    r_0 |U_0| \leq \sqrt{2} \rho_0 \left(1-\frac{\rho_0^2}{\rho_\crit^2}\right).
\end{equation*}
Here it is noteworthy to separate three cases.
\begin{enumerate}
    \item The boundary case $\rho_0 = \rho_\crit$ forces $U_0=0$, and in particular this case corresponds to a special case of the aforementioned constant solution $x \equiv 1+8\lambda^2$, corresponding to Proposition \ref{prop-constant-rho-delta=0} (iv).
    \item In the boundary case $\frac{r_0U_0}{\sqrt{2}\rho_0} = \pm(1-\frac{\rho_0^2}{\rho_\crit^2})$ we can write
    \begin{equation*}
        \rho(s) = \rho_0\left[\left(1-\frac{\rho_0^2}{\rho_\crit^2}\right) e^{\mp\sqrt{2}s} + \frac{\rho_0^2}{\rho_\crit^2}\right]^{-\frac12}.
    \end{equation*}
    These solutions have $\rho \to 0$ as $s\to\mp\infty$ and $\rho \to \rho_\crit$ as $s \to \pm \infty$.
    In particular they correspond to the unstable (resp.\ stable) manifold of the fixed point $(x,x') = (1+8\lambda^2, 0)$.
    One can also express $(z,\xi)$ and $r$ explicitly but the formulas are not particularly illuminating so we omit them.

    \item Finally, if $0 < 1-\frac{\rho_0^2}{\rho_\crit^2} < r_0|U_0|$ we can more simply express $\rho$ as
    \begin{equation*}
        \rho(s) = \rho_0\left[\cosh(\sqrt{2}s + \gamma_0) + \frac{\rho_0^2}{\rho_\crit^2}\right]^{-\frac12},
        \quad
        \gamma_0 = \artanh \frac{r_0U_0}{\sqrt{2} \rho_0 \left(1-\frac{\rho_0^2}{\rho_\crit^2}\right)}.
    \end{equation*}
    One can then express $r$ and $(z,\xi)$ in terms of elliptic integrals of the first kind but the formulas become somewhat unwieldy.
    More importantly, we observe in this case that $\rho$ is integrable, so that $z$ and $\xi$ have well-defined finite limits as $s \to \infty$, and the metric coefficient $r \to 0$ with decay rate $\exp(-\frac{3}{\sqrt{2}}|s|)$.
\end{enumerate}


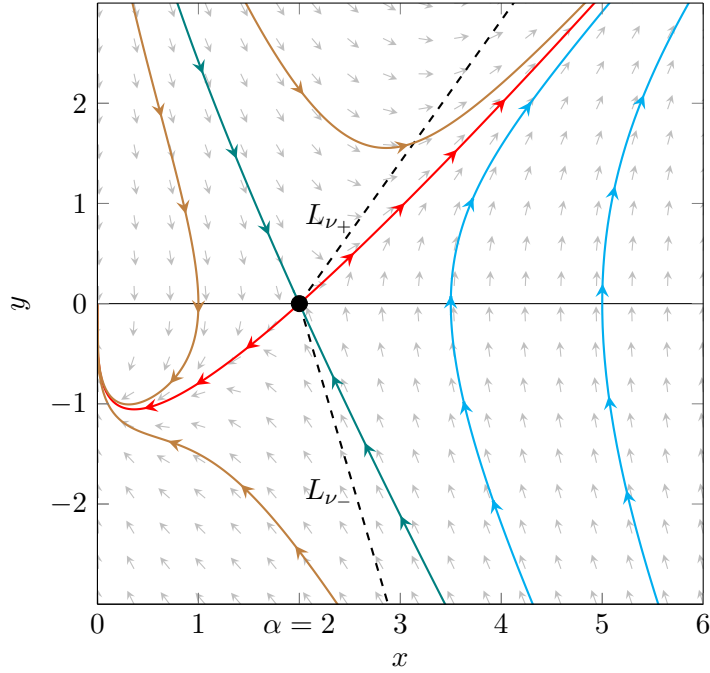
\begin{figure}[t]
\begin{tikzpicture}

\pgfmathsetmacro{\lam}{1}
\pgfmathsetmacro{\K}{2}
\pgfmathsetmacro{\kap}{sqrt(1-(\K-1)/(8*\lam^2))}
\pgfmathsetmacro{\muone}{-\lam*\kap/sqrt(\K) + sqrt((\lam*\kap)^2/\K + 2)}
\pgfmathsetmacro{\mutwo}{-\lam*\kap/sqrt(\K) - sqrt((\lam*\kap)^2/\K + 2)}
\pgfmathsetmacro{\boxw}{6}

\begin{axis}
[
    xmin = 0, xmax = \boxw,
    ymin = -\boxw/2, ymax = \boxw/2,
    width = 0.6\textwidth,
    height = 0.6\textwidth,
    xlabel = {$x$},
    ylabel = {$y$},
    xtick = {0, 1, 2, 3, 4, 5, 6},
    xticklabels = {$0$, $1$, $\alpha=2$, $3$, $4$, $5$, $6$},
    ytick = {-2, -1, 0, 1, 2},
    view = {0}{90}
]

\addplot3[
    quiver = {
        u = {y/(sqrt(y^2+4*(-\lam*\kap*y/sqrt(x) + x - \K)^2))},
        v = {2*(-\lam*\kap*y/sqrt(x) + x - \K)/(sqrt(y^2+4*(-\lam*\kap*y/sqrt(x) + x - \K)^2))},
        scale arrows = 0.15
    },
    -stealth,
    domain = 0.01:\boxw,
    samples = 18 ,
    domain y = -\boxw/2:\boxw/2,
    lightgray] 
{0};


\addplot [
    domain=\K:\boxw, 
    samples=100, 
    color=black,
    dashed,
    thick
]
{sqrt(2)*(x-\K)};

\addplot [
    domain=\K:\boxw, 
    samples=100, 
    color=black,
    dashed,
    thick
]
{-(2+sqrt(2))*(x-\K)};

\addplot[mark=none,black,domain=0:\boxw] {0};

\addplot[color=red,smooth,tension=0.7,thick] table [x index=0,y index=1,col sep=comma] {anc/constant_W_delta=0.csv};
\addplot[red, quiver={
        u = {y/(sqrt(y^2+4*(-\lam*\kap*y/sqrt(x) + x - \K)^2))},
        v = {2*(-\lam*\kap*y/sqrt(x) + x - \K)/(sqrt(y^2+4*(-\lam*\kap*y/sqrt(x) + x - \K)^2))},
        scale arrows = 0.05
      }, -stealth, very thick] 
      table[
        x index=0,
        y index=1,
        col sep=comma,
      ] {anc/constant_W_delta=0_arrows.csv};

\addplot[color=teal,smooth,tension=0.7,thick] table [x index=2,y index=3,col sep=comma] {anc/constant_W_delta=0.csv};
\addplot[teal, quiver={
        u = {y/(sqrt(y^2+4*(-\lam*\kap*y/sqrt(x) + x - \K)^2))},
        v = {2*(-\lam*\kap*y/sqrt(x) + x - \K)/(sqrt(y^2+4*(-\lam*\kap*y/sqrt(x) + x - \K)^2))},
        scale arrows = 0.05
      }, -stealth, very thick] 
      table[
        x index=2,
        y index=3,
        col sep=comma,
      ] {anc/constant_W_delta=0_arrows.csv};

\addplot[color=cyan,smooth,tension=0.7,thick] table [x index=4,y index=5,col sep=comma] {anc/constant_W_delta=0.csv};
\addplot[cyan, quiver={
        u = {y/(sqrt(y^2+4*(-\lam*\kap*y/sqrt(x) + x - \K)^2))},
        v = {2*(-\lam*\kap*y/sqrt(x) + x - \K)/(sqrt(y^2+4*(-\lam*\kap*y/sqrt(x) + x - \K)^2))},
        scale arrows = 0.01
      }, -stealth, very thick] 
      table[
        x index=4,
        y index=5,
        col sep=comma,
      ] {anc/constant_W_delta=0_arrows.csv};

\addplot[color=cyan,smooth,tension=0.7,thick] table [x index=6,y index=7,col sep=comma] {anc/constant_W_delta=0.csv};
\addplot[cyan, quiver={
        u = {y/(sqrt(y^2+4*(-\lam*\kap*y/sqrt(x) + x - \K)^2))},
        v = {2*(-\lam*\kap*y/sqrt(x) + x - \K)/(sqrt(y^2+4*(-\lam*\kap*y/sqrt(x) + x - \K)^2))},
        scale arrows = 0.05
      }, -stealth, very thick] 
      table[
        x index=6,
        y index=7,
        col sep=comma,
      ] {anc/constant_W_delta=0_arrows.csv};

\addplot[color=brown,smooth,tension=0.7,thick] table [x index=8,y index=9,col sep=comma] {anc/constant_W_delta=0.csv};
\addplot[brown, quiver={
        u = {y/(sqrt(y^2+4*(-\lam*\kap*y/sqrt(x) + x - \K)^2))},
        v = {2*(-\lam*\kap*y/sqrt(x) + x - \K)/(sqrt(y^2+4*(-\lam*\kap*y/sqrt(x) + x - \K)^2))},
        scale arrows = 0.05
      }, -stealth, very thick] 
      table[
        x index=8,
        y index=9,
        col sep=comma,
      ] {anc/constant_W_delta=0_arrows.csv};

\addplot[color=brown,smooth,tension=0.7,thick] table [x index=10,y index=11,col sep=comma] {anc/constant_W_delta=0.csv};
\addplot[brown, quiver={
        u = {y/(sqrt(y^2+4*(-\lam*\kap*y/sqrt(x) + x - \K)^2))},
        v = {2*(-\lam*\kap*y/sqrt(x) + x - \K)/(sqrt(y^2+4*(-\lam*\kap*y/sqrt(x) + x - \K)^2))},
        scale arrows = 0.05
      }, -stealth, very thick] 
      table[
        x index=10,
        y index=11,
        col sep=comma,
      ] {anc/constant_W_delta=0_arrows.csv};

\addplot[color=brown,smooth,tension=0.7,thick] table [x index=12,y index=13,col sep=comma] {anc/constant_W_delta=0.csv};
\addplot[brown, quiver={
        u = {y/(sqrt(y^2+4*(-\lam*\kap*y/sqrt(x) + x - \K)^2))},
        v = {2*(-\lam*\kap*y/sqrt(x) + x - \K)/(sqrt(y^2+4*(-\lam*\kap*y/sqrt(x) + x - \K)^2))},
        scale arrows = 0.05
      }, -stealth, very thick] 
      table[
        x index=12,
        y index=13,
        col sep=comma,
      ] {anc/constant_W_delta=0_arrows.csv};

\filldraw (\K,0) circle (3pt);

\draw (2.3,-1.9) node {$L_{\nu_-}$};
\draw (2.3,0.85) node {$L_{\nu_+}$};

\end{axis}
\end{tikzpicture}
\caption{Phase portrait of \eqref{eq-autonomous-kappa-pos} for $\lambda=1$, $\beta=\sqrt{7/8}$, $\alpha=2$. The green (resp.\ red) curve represents the stable (resp.\ unstable) manifold at the fixed point $(\alpha,0)$. The blue curves are the global solutions lying in the region $D_+ \cup D_-$. The brown curves are some prototypical orbits that do not lie entirely in $D_+ \cup D_-$, they all have $x \to 0$ at some finite $s$ (either forwards or backwards or both).}
\label{fig-beta-delta0=0-phase-portrait}
\end{figure}

More generally, if $\pi/2 < W_0 <\pi$ we have $\beta > 0$, and we rewrite \eqref{eq-beta-delta=0} as the first order autonomous system
\begin{equation}\label{eq-autonomous-kappa-pos}
    \begin{cases}
        x' = y,\\
        y' = -2\beta x^{-\frac12}y + 2(x-\alpha).
    \end{cases}
\end{equation}
We display the phase portrait of \eqref{eq-autonomous-kappa-pos} in Figure \ref{fig-beta-delta0=0-phase-portrait}, and the remainder of the section is devoted to studying its properties.
Let $F$ be the vector field associated to \eqref{eq-autonomous-kappa-pos}
The differential of $F$ is
\begin{equation*}
    \diff F
    =
    \begin{bmatrix}
        0 & 1\\
        \beta x^{-\frac32}y +2 & -2 \beta x^{-\frac12}
    \end{bmatrix}.
\end{equation*}
The system has a single fixed point at $(\alpha,0)$.
The characteristic equation and
eigenvalues of $\diff F$ at $(\alpha, 0)$ are respectively given by
\begin{equation*}
    \mu^2 + 2\beta\alpha^{-\frac12}\mu - 2 = 0,
    \qquad
    \mu_\pm = -\frac{\beta}{\sqrt{\alpha}} \pm \sqrt{\frac{\beta^2}{\alpha} + 2}, 
\end{equation*}
implying that $(\alpha,0)$ is a saddle point as $\mu_- < 0 < \mu_+$.
The stable manifold enters $(\alpha,0)$ in the direction of $(1,\mu_-)$, while the unstable manifold exits $(\alpha,0)$ in the direction of $(1,\mu_+)$.
We denote the $y>0$ branch of the stable (resp.\ unstable) manifold by $\mathscr{S}_+$ (resp.\ $\mathscr{U}_+$), and the $y < 0$ branch by $\mathscr{S}_-$ (resp.\ $\mathscr{U}_-$). 

Fix $\nu_+ =\sqrt{2} > \mu_+$ and any $\nu_- < \mu_-$, 
and define the auxiliary regions (cf.\ Figure \ref{fig-beta-delta0=0-phase-portrait})
\begin{align*}
    D_+ &= \{ (x,y) \in \R^2 \,\mid\, x\geq \alpha, \; 0 \leq y \leq \nu_+(x-\alpha) \},
    \\
    D_- &= \{ (x,y) \in \R^2 \,\mid\, x\geq \alpha, \; \nu_-(x-\alpha) \leq y \leq 0 \}.
\end{align*}
We will show that a solution starting in $D_+$ (resp.\ $D_-$) stays in $D_+$ (resp.\ $D_-$), is defined forward globally (resp.\ backward globally), and also that the $\mathscr{S}_-$ branch of the stable manifold (resp.\ $\mathscr{U}_+$ branch of the unstable manifold) is contained in $D_+$ (resp.\ $D_-$).

We first note that the Frobenius norm of the differential satisfies
\begin{equation}\label{eq-dF-est}
    |\diff F|^2 = 1 +(\beta x^{-\frac32}y+2)^2 +4\beta^2 x^{-1},
\end{equation}
which is clearly uniformly bounded on $D_+ \cup D_-$, so by the Picard-Lindelöf theorem the solution continues existing (forward or backward) as long as it stays in this region. 
To show that this is the case, we note that the boundary of $D_\pm$ consists of the fixed point $(\alpha,0)$ and rays of the form
\begin{equation*}
    L_\nu = \{(x,y)\,\mid\, x>\alpha,\, y=\nu(x-\alpha)\}    
\end{equation*}
for fixed $\nu \in \R$.
More precisely, $\partial D_\pm = \{(\alpha,0)\} \cup L_0 \cup L_{\nu_\pm}$, cf.\ Figure \ref{fig-beta-delta0=0-phase-portrait}.
Since no orbit can escape $D_\pm$ via the fixed point $(\alpha,0)$ it suffices to show that they cannot escape via the rays.
Along $L_\nu$ we can define a normal vector field via $n_\nu = (\nu,-1)$.
A calculation then shows that
\begin{equation}\label{eq-constant-W-delta0=0-vector-field-normal-part}
    \langle F, n_\nu \rangle|_{L_\nu} = (\nu^2 + 2\beta x^{-\frac12}\nu - 2)(x-\alpha).
\end{equation}
Thus:
\begin{itemize}[itemsep=0.1cm]
    \item Along $L_{\nu_+}$, we have $\langle F, n_{\nu_+}\rangle = 2\sqrt{2}\beta x^{-\frac12}(x-\alpha) > 0$ since $\nu_+ = \sqrt{2}$.
    As $n_{\nu_+}$ is inward-pointing for $D_+$, we see that no forward orbit can escape $D_+$ through $L_{\nu_+}$.
    \item Along $L_{\nu_-}$, we have $\langle F, n_{\nu_-}\rangle > (\nu_-^2 + 2\beta\alpha^{-\frac12}\nu_- - 2)(x-\alpha) > 0$ since $\nu_- < \mu_- < 0$. As $n_{\nu_-}$ is outward-pointing for $D_-$, we see that no backward orbit can escape $D_-$ through $L_{\nu_-}$.
    \item Along $L_0$, we have $\langle F, n_0 \rangle = -2(x-\alpha) < 0$. Since $n_0$ is outward-pointing for $D_+$, we see that no forward orbit can escape $D_+$ through $L_0$. Analogously, no backward orbit can escape $D_-$ through $L_0$.
\end{itemize}
We conclude that the region $D_+$ (resp.\ $D_-$) is forward- (resp.\ backward-) invariant, as desired.
In particular, we also conclude that the solutions emanating from the ray $D_+\cap D_- = [\alpha,\infty)\times\{0\}$ are global (both forwards and backwards).
Furthermore, since the $\mathscr{S}_+$ manifold exits (resp.\ $\mathscr{U}_-$ enters) the fixed point in the direction of $(1,\mu_+)$ (resp.\ $(1,\mu_-)$), we see that it lies in $D_+$ (resp.\ $D_-$) close to the fixed point, and consequently they stay in this region.

The curves $\mathscr{S}_-$ and $\mathscr{U}_+$ together with the fixed point $(K,0)$ thus enclose a region of good orbits, which are globally defined in both directions.
All other orbits can be shown to hit the singularity $x=0$ at some finite $s$, either in the forward or the backward direction (or both), cf.\ Figure \ref{fig-beta-delta0=0-phase-portrait}. 
This corresponds to finite $s$ blow up of the Dirac-Yang-Mills pair and the metric, so we do not pursue these orbits further.

Finally, we study the integrability properties of $\rho = x^{-\frac12}$.
To this end, we first note that the non-constant forward orbits in $D_+$ (resp. $D_-$) have $x' > 0$ (resp.\ $x'<0$) uniformly, so that $x \to \infty$ as $s \to \infty$ (resp.\ $s \to -\infty$). 
On the other hand, when $x$ is sufficiently large we have
\begin{equation*}
    2(1-\epsilon)x \leq y' \leq 2(1+\epsilon)x,
\end{equation*}
so that by a comparison argument we have:
\begin{itemize}
    \item $x \gtrsim e^{\sqrt{2(1-\epsilon)} s}$ for forward orbits in $D_+$ and sufficiently positive $s$,
    \item $x \gtrsim e^{-\sqrt{2(1+\epsilon)}s}$ for backward orbits in $D_-$ and sufficiently negative $s$.
\end{itemize}
It follows $x^{-\alpha}$ is forwards integrable in $D_+$ (resp.\ backwards integrable in $D_-$) for any $\alpha > 0$, and hence so is $\rho = x^{-\frac12}$.

\subsection{\texorpdfstring{The case $\delta_0>0$}{The case delta0 > 0}}

Finally, we study the solutions with constant $W$ and initial data with $\delta_0 > 0$.
This case is considerably more involved than its counterpart from the previous section due to the dependence of the equations on $P$. In particular, here we are unable to decouple the equation for $P$ from the equation for $\rho$, which ultimately yields a three-dimensional dynamical system, which additionally has a singularity at $P=0$.
Due to this complexity, we content ourselves with the following partial result.

\begin{proposition}
\label{prop-constant-W-delta>0}
    Using the notation of Proposition \ref{lem-polar-form}, let $\delta_0 > 0$ and assume $W \equiv W_0$ for a constant $\pi/2 < W_0 \leq \pi$. We set $\beta = \lambda\sin(W_0)$ and $\alpha = 1+4(\lambda^2-\beta^2)$. For simplicity, we consider only initial data with $U_0 = 0$, i.e.\ $\rho_0' = 0$. 
    \begin{enumerate}[itemsep=0.1cm]
        \item If
        \begin{equation*}
            \rho_0 > \left(\sqrt{2\alpha-1 + 2\beta^2 \coth^2 P_0} - \sqrt{2}\beta \coth P_0\right)^{-1},    
        \end{equation*}
        then $\rho \to \infty$ at some finite $s > 0$.
        \item Let $\mu = -\tan(W_0) > 0$. If $\mu < 1$ and
        \begin{equation*}
            \rho_0 \leq \left( 2\alpha-1 + \frac{(\mu^2 + \mu\sqrt{\mu^2+8} + 2)(\alpha-1)}{2(1-\mu^2)\sinh^2P_0} \right)^{-\frac12},
        \end{equation*}
        then $\rho$ is globally defined and tends to $0$ as $s \to \pm \infty$ exponentially. In particular, the associated metric coefficient $r \to 0$ and the DYM pair $(z,\xi)$ has finite limits as $s \to \pm \infty$.
    \end{enumerate}
\end{proposition}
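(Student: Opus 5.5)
The plan is to reduce the problem to a three-dimensional autonomous system for $(\rho,\rho',P)$, mirroring \S\ref{subsec-constant-W-delta=0}. With $W\equiv W_0$, equation \eqref{eq-W-polar-delta>0} becomes an algebraic relation that determines the metric coefficient:
\begin{equation*}
    r = -\frac{8\lambda\cos(W_0)}{\delta_0^2}\,\frac{\rho^3}{\sinh P}.
\end{equation*}
This requires $\cos W_0<0$, which explains the range $\pi/2<W_0\le\pi$. Equation \eqref{eq-P-polar-delta>0} becomes $P'=2\beta\rho$. Hence $r'/r = 3\rho'/\rho - 2\beta\rho\coth P$.

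I would insert these into \eqref{eq-rho-polar-delta>0} and \eqref{eq-H-polar-delta>0}, multiplied by $r$. The two cubic terms then combine, using $\coth^2P=1+\csch^2P$, into a single coefficient $2\alpha-1+(\alpha-1)\csch^2P$. Passing to $x=\rho^{-2}$ as before, I expect
\begin{equation*}
    x'' + 2\beta\coth P\, x^{-\frac12}x' - 2\bigl(x-(2\alpha-1)-(\alpha-1)\csch^2P\bigr)=0,\qquad P'=2\beta x^{-\frac12}.
\end{equation*}
This is an autonomous system in $(x,y=x',P)$, singular at $x=0$ and $P=0$. The condition $U_0=0$ means $y(0)=0$. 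Blow-up $\rho\to\infty$ corresponds to $x\to0$.

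For (i), the threshold is equivalent to $u_0:=\rho_0^{-1}=\sqrt{x_0}$ satisfying $u_0^2+2\sqrt2\beta\coth P_0\,u_0<2\alpha-1$. In particular $x_0<2\alpha-1$, so $y'(0)<0$ and $x$ starts decreasing. As long as $y<0$ we have $P'>0$, so $\coth P$ and $\csch P$ decrease. I would look for a barrier of the form $y\le-\sqrt2\,\bigl(\sqrt{x}\,\cdots\bigr)$, analogous to the ray $L_{\nu_+}$ with $\nu_+=\sqrt2$ from the previous subsection. The guiding idea is that the quadratic in $u$ above is precisely the condition $\nu^2+2\beta\coth P\,x^{-1/2}\nu-2<0$ from \eqref{eq-constant-W-delta0=0-vector-field-normal-part}, rewritten in the variables $x$ and $\sqrt{x}$. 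I would then show this region is forward invariant and that $y$ remains uniformly negative on it, so $x$ reaches $0$ in finite $s$. The monotonicity of $\coth P$ along the orbit is what allows the initial value $\coth P_0$ to be frozen in the estimate.

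For (ii), the hypothesis is a lower bound $x_0\ge 2\alpha-1+(\alpha-1)\,c(\mu)\csch^2P_0$. Here $\mu=-\tan W_0=\beta/(\lambda|\cos W_0|)$, and $(\mu^2+\mu\sqrt{\mu^2+8}+2)/(2(1-\mu^2))$ should come from optimising a ray slope $\nu$. I would build forward and backward invariant regions of $D_\pm$ type, now in the three variables, with the stronger requirement that $x$ stays above $2\alpha-1+(\alpha-1)\csch^2P$. In the forward direction this is easy because $P$ increases, so the barrier decreases. The orbit then has $y>0$, $x\to\infty$ and $x\gtrsim e^{\sqrt{2(1-\epsilon)}s}$ by the same comparison argument as before. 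Hence $\rho$ is integrable and $P\to P_+<\infty$. It follows that $r\sim\rho^3/\sinh P\to0$ exponentially, and the formulas in Lemma~\ref{lem-polar-form} give finite limits of $(z,\xi)$.

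The main obstacle is the backward direction, where $P$ decreases. One must guarantee that $P$ stays positive, i.e.\ $2\beta\int_{-\infty}^0\rho<P_0$, while $\csch^2P$ grows and pushes the barrier up. I would first try a bootstrap: assume $P\ge P_0/\kappa$ on $[s,0]$, use the barrier with slope $\nu$ to get $x\gtrsim x_0e^{\nu|s|}$ and hence a bound on $\int\rho$, and close the argument using $\mu<1$. The factor $1-\mu^2$ in the hypothesis should come exactly from this closing step.
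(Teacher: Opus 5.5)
\textbf{Part (ii), backward direction: this is the genuine gap.}

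You locate the difficulty in keeping $P>0$ and propose a bootstrap closed ``using $\mu<1$''. Positivity of $P$ needs no bootstrap. As long as $x\ge x_{\crit}(P):=2\alpha-1+(\alpha-1)\csch^2P$, one has $x^{-1/2}\le\tanh P/\sqrt{\alpha-1}$. Recalling $\mu=2\beta/\sqrt{\alpha-1}$, this gives $P'=2\beta x^{-1/2}\le\mu\tanh P$, hence $\sinh P(s)\ge\sinh P_0\,e^{\mu s}$ for $s<0$. So $P$ cannot reach $0$ at finite $s$, although it may tend to $0$. A uniform positive lower bound on $P$ is more than is needed, and the authors remark they cannot even show that $P$ has a positive limit.

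The real difficulty is staying above the barrier $x_{\crit}(P)$, which rises as $s$ decreases. Your sketch gives no mechanism for the growth $x\gtrsim x_0e^{\nu|s|}$ against that barrier. The missing idea is the substitution $Q=\tfrac12 x'\sinh P$, $\zeta=(x-x_{\crit}(P))\sinh P$, which turns the system into
\[
Q'=\zeta,\qquad \zeta'=2Q+2\beta x^{-1/2}\coth P\,(\zeta+2(\alpha-1)\csch P),\qquad P'=2\beta x^{-1/2}.
\]
On $\{\zeta\ge0\}$ the two estimates above give, for $s<0$,
\[
\zeta'\le2Q+\mu\zeta+\tfrac{2\mu(\alpha-1)}{\sinh P_0}e^{-\mu s}.
\]
A comparison argument (the system is cooperative in $(-Q,\zeta)$ in backward time) then gives $\zeta\ge\widetilde\zeta$, where $\widetilde\zeta$ solves the forced linear system with equality and the same data.

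As $s\to-\infty$, the homogeneous part of $\widetilde\zeta$ is dominated by a mode growing like $e^{\frac{\sqrt{\mu^2+8}-\mu}{2}|s|}$, while the forcing grows like $e^{\mu|s|}$. The homogeneous mode wins if and only if $\mu<1$; for $\mu\ge1$ the negative particular solution dominates and the comparison says nothing. For $\mu<1$, $\widetilde\zeta\ge0$ on $s\le0$ if and only if the coefficient of that growing mode is $\le0$. For $Q_0=0$ this is exactly the bound on $\rho_0$ in (ii). This, not a slope optimisation or the closing step of a bootstrap, is where the factor $1-\mu^2$ and the constant $\mu^2+\mu\sqrt{\mu^2+8}+2$ come from. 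Your forward argument for (ii) is fine.

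\textbf{Part (i): a step fails as stated.}

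Since $y(0)=0$, $y$ cannot be uniformly negative on a forward-invariant region containing the initial point. Moreover, below a line of slope $-\sqrt2$ the sign of $y$ is not controlled when $x<2\alpha-1$. Your quadratic in $u_0$ is also not the slope condition from the $\delta_0=0$ case. It is exactly the condition $L_0<0$ for
\[
L=x-(2\alpha-1)+\tfrac{1}{\sqrt2}\,x'+2\sqrt2\,\beta\sqrt{x}\,\coth P .
\]
The $\coth P$-term is chosen so that the damping terms cancel, and one gets
\[
L'-\sqrt2L=-4(\beta\sqrt{x}\coth P+\sqrt2\lambda^2\csch^2P)<0 .
\]
No freezing of $\coth P_0$ is needed. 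Hence $L\le L_0e^{\sqrt2 s}$ for $s>0$. Integrating $x'+\sqrt2x\le\sqrt2(2\alpha-1)+\sqrt2L$ gives
\[
x(s)\le(2\alpha-1)+(x_0-(2\alpha-1))e^{-\sqrt2 s}+L_0\sinh(\sqrt2 s),
\]
which becomes negative for large $s$ when $L_0<0$. So $x$ reaches $0$, i.e.\ $\rho\to\infty$, at finite $s$.
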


\begin{remark}
    Note that even in the case $\mu < 1$, there is still a non-empty interval $\rho_0$ for which our result gives no information. In fact, our numerical analysis suggests that there exist global solutions for any $\mu > 0$, and in particular the bound for $\rho_0$ in (ii) is not sharp.
\end{remark}

To prove the result, assume $W \equiv W_0$ and $\delta_0 > 0$.
Then \eqref{eq-P-polar-delta>0} implies that
\begin{equation}\label{eq-P-sol-delta>0}
    P(s) = P_0 + 2\lambda \sin (W_0) \int_0^s \rho,
\end{equation}
and \eqref{eq-W-polar-delta>0} implies 
\begin{equation*}
    r(s) = -\frac{8\lambda\cos (W_0)}{\delta_0^2} \rho(s)^3 \csch\left(P_0 + 2\lambda \sin (W_0) \int_0^s \rho\right).
\end{equation*}
In particular we see that we need that $P > 0$ and $\pi/2 < |W_0| \leq \pi$ in order for $r$ to be positive.
We can also express
\begin{equation*}
    \delta_0^2 r\sinh P = -8\lambda \cos (W_0)\rho^3 , \qquad \delta_0^4r^2 \cosh^2 P = 64 \lambda^2 \cos^2 (W_0) \rho^6 \coth^2 P
\end{equation*}
The metric coefficient satisfies
\begin{equation*}
    \frac{r'}{r} = \frac{3\rho'}{\rho} - 2\lambda\sin (W_0) \rho \coth P,
\end{equation*}
so that the Yang-Mills equation becomes 
\begin{equation*}
    \rho'' - \left(\frac{3\rho'}{\rho} - 2\lambda\sin(W_0) \,\rho \coth P\right)\rho' + \rho\left[1- \left(1+4\lambda^2 \cos^2(W_0) (1+\coth^2 P)\right) \rho^2\right] = 0 
\end{equation*}
Defining $x = \rho^{-2}$ and setting, as before, $\beta = \lambda\sin(W_0)$ and $\alpha = 1 + 4 (\lambda^2-\beta^2)\geq 1$ turns the Yang-Mills equation into
\begin{equation}\label{eq-constant-W-delta>0-x}
    x'' + 2\beta x^{-\frac{1}{2}}x'\coth(P) + (2\alpha -2)\coth^2(P) -2x +2\alpha = 0.
\end{equation}
We note that reflecting $s\mapsto -s$ has the same effect as sending $\beta \mapsto -\beta$, so that we do not lose generality by assuming $\beta \geq 0$, i.e.\ $\pi/2 < W_0 \leq \pi$.

\medskip

Firstly, in the particular case $\beta=0$, i.e.\ $W_0 = \pi$, we see that $P \equiv P_0$ is constant and the equation \eqref{eq-constant-W-delta>0-x} reduces to
\begin{equation*}
    x'' - 2x + 2(1+4\lambda^2(1+\coth^2(P_0))) = 0,
\end{equation*}
which can be solved explicitly to get
\begin{align*}
    \rho(s) = \rho_0\bigg[&\left(1 - \rho_0^2(1+4\lambda^2(1+\coth^2 P_0))\right)\cosh(\sqrt{2}s) \\
    &- \frac{r_0U_0}{\sqrt{2}\rho_0} \sinh(\sqrt{2}s) + \rho_0^2(1+4\lambda^2 (1+\coth^2 P_0))\bigg]^{-\frac12}.
\end{align*}
By the same token as in the $\delta_0 = 0$ case (cf.\ \S \ref{subsec-constant-W-delta=0}), this provides a global DYM pair if and only if
\begin{equation*}
    r_0|U_0| \leq \sqrt{2}\rho_0 (1 - \rho_0^2(1+4\lambda^2(1+\coth^2 P_0))),
\end{equation*}
and the special case $\rho_0 = (1+4\lambda^2(1+\coth^2 P_0))^{-\frac{1}{2}}$ corresponds to the fixed point $(\rho,P,W) \equiv (\rho_0, P_\infty, \pi)$, cf.\ Proposition \ref{prop-constant-rho-delta>0} (iv).

\medskip

For the rest of the section we assume $\beta>0$.
We begin by noting that if
\begin{equation}\label{eq-constantW-delta>0-H}
    L = x - (2\alpha-1) + \frac{1}{\sqrt{2}}x' + 2\sqrt{2} \beta x^{\frac12} \coth P,
\end{equation}
then \eqref{eq-P-sol-delta>0} and \eqref{eq-constant-W-delta>0-x} imply that
\begin{equation*}
    L' = \sqrt{2} L - 4\left(\beta x^{\frac12}\coth P + \frac{\sqrt{2}\lambda^2}{\sinh^2 P}\right) < \sqrt{2}L,
\end{equation*}
so that $e^{-\sqrt{2}s} L(s)$ decreases.
In particular, then (for $s > 0$)
\begin{equation*}
    x - (2\alpha-1) + \frac{1}{\sqrt{2}}x' \leq L \leq L_0e^{\sqrt{2}s},
\end{equation*}
which implies that
\begin{equation*}
    x(s) \leq (2\alpha-1) + (x_0-(2\alpha-1)) e^{-\sqrt{2}s} + L_0 \cosh (\sqrt{2}s).
\end{equation*}
This inequality holds regardless of the choice of $P_0 > 0$, and for all $s > 0$ for which the orbit exists.
If $L_0 < 0$, then the right-hand side of the inequality is eventually negative, implying that $x\to 0$ at some finite $s > 0$ (note that $w$ and $P$ stay bounded as long as $x$ is bounded and away from zero), and the orbit is singular.
For $x_0' = 0$, the condition $L_0 < 0$ is equivalent to the one from Proposition \ref{prop-constant-W-delta>0} (i), which proves that part of the result.
This also shows that only initial data with $L_0 \geq 0$ are of potential interest to us, and in fact we see that, for such initial data, $x$ cannot escape to infinity for finite $s > 0$, and thus neither can $x'$ nor $P$, by \eqref{eq-constantW-delta>0-H} and \eqref{eq-P-sol-delta>0}.
Therefore, orbits with $L_0 \geq 0$ are forward global.
Since we are additionally assuming that $x'_0 = 0$ in Proposition \ref{prop-constant-W-delta>0}, we see in particular that orbits with $x_0 \geq 2\alpha-1$ are forward global for any $P_0 > 0$.

\medskip

Now define
\begin{equation*}
    x_\crit(P) = \alpha + (\alpha-1)\coth^2 P = 2\alpha - 1 + \frac{\alpha-1}{\sinh^2 P},
\end{equation*}
and
\begin{equation*}
    Q = \frac12 x' \sinh P, \qquad \zeta = (x-x_\crit(P))\sinh P.
\end{equation*}
Then the system becomes
\begin{equation*}
    \begin{cases}
        Q' = \zeta,\\[0.1cm]
        \zeta' = 2Q + 2\beta x^{-\frac12}\coth P \left( \zeta + \frac{2(\alpha-1)}{\sinh P} \right),\\[0.1cm]
        P' = 2\beta x^{-\frac12}.
    \end{cases}
\end{equation*}


We are only interested in orbits with $\zeta_0 > 0$, $Q_0 = 0$.
Note that if $\zeta_0 > 0$, $Q_0 \geq 0$, then $L_0 \geq 0$, so that these orbits are forward global and have $\zeta > 0$, $Q \geq 0$ for all $s > 0$ since $Q' > 0$ and $\zeta' > 0$ in that region. 
In fact, for all such initial data, $P$ has a finite limit $s\to\infty$, or equivalently $x^{-\frac12}$ is integrable for $s > 0$.
This can be be shown by contradiction using a comparison argument, since if $P\to\infty$, then $\coth P \to 1$ and the equation \eqref{eq-constant-W-delta>0-x} asymptotically has a similar form as the one from \S \ref{subsec-constant-W-delta=0}.
We omit the details for brevity.

In the region $\zeta \geq 0$, we have $x\geq x_\crit(P)$, and therefore we can estimate
\begin{equation*}
    x^{-\frac12} \leq \frac{1}{\sqrt{\alpha + (\alpha-1)\coth^2P}} \leq \frac{\tanh P}{\sqrt{\alpha-1}}.
\end{equation*}
If we set $\mu = 2\beta/\sqrt{\alpha-1} = -\tan(W_0) > 0$, we thus see that
\begin{equation*}
    \begin{cases}
        Q' = \zeta,\\[0.1cm]
        \zeta' \leq 2Q + \mu \zeta + \frac{2\mu(\alpha-1)}{\sinh P},\\[0.1cm]
        P' \leq \mu \tanh P.
    \end{cases}
\end{equation*}
The final inequality implies that, for $s < 0$,
\begin{equation*}
    \sinh P(s) \geq \sinh P_0 \, e^{\mu s},
\end{equation*}
and therefore
\begin{equation}\label{eq-zeta'-bound-s<0}
    \zeta' \leq 2Q + \mu \zeta + \frac{2\mu(\alpha-1)}{\sinh P_0} e^{-\mu s} \quad \text{for} \quad s < 0.
\end{equation}

The inequality (\ref{eq-zeta'-bound-s<0}) allow for a comparison with a simpler system.
More particularly, consider the solution $(\widetilde{Q}, \widetilde{\zeta})$ of the system
\begin{equation*}
    \begin{cases}
        \widetilde{Q}' = \widetilde{\zeta},\\[0.1cm]
        \widetilde \zeta' = 2\widetilde{Q} + \mu \widetilde\zeta + \frac{2\mu(\alpha-1)}{\sinh P_0} e^{-\mu s},
    \end{cases}
\end{equation*}
with the same initial data $(\widetilde{Q}, \widetilde \zeta)(0) = (Q_0,\zeta_0)$ as the original solution.
A standard comparison argument then shows that, as long as $\zeta \geq 0$, we have $\zeta(s) \geq \widetilde\zeta(s)$ for all $s < 0$.
If we put $\kappa = \sqrt{\mu^2 + 8}$ then the solution is explicitly given by
\begin{equation*}
    \widetilde \zeta(s) = 
    \begin{cases}
        \frac{\kappa+\mu}{2}c_1e^{\frac{\kappa+\mu}{2}s} - \frac{\kappa-\mu}{2} c_2 e^{-\frac{\kappa-\mu}{2}s}
        - \frac{\mu^2(\alpha-1)}{(\mu^2-1)\sinh P_0} e^{-\mu s}, & \mu\not=1,
        \\[0.2cm]
        2c_1 e^{2s} - c_2 e^{-s}
        - \frac{2(\alpha-1)}{3\sinh P_0} (1-s)e^{-s}, & \mu=1.
    \end{cases}
\end{equation*}
where
\begin{align*}
    c_1 &= \frac{\kappa-\mu}{2\kappa} Q_0 + \frac{1}{\kappa}\zeta_0 - \frac{\mu(\kappa-3\mu)(\alpha-1)}{2\kappa(\mu^2-1)\sinh P_0},\\[0.1cm]
    c_2 &= \frac{\kappa+\mu}{2\kappa} Q_0 - \frac{1}{\kappa}\zeta_0 - \frac{\mu(\kappa+3\mu)(\alpha-1)}{2\kappa(\mu^2-1)\sinh P_0}.
\end{align*}
Thus it suffices to find conditions for $\widetilde\zeta \geq 0$ to hold for $s \leq 0$.
It is simple to show that $\widetilde\zeta$ always attains negative values for $\mu \geq 1$, so the comparison gives no information in this case.
On the other hand, if $\mu < 1$, then $\widetilde\zeta \geq 0$ for $s \leq 0$ if and only if $\zeta_0 \geq 0$ and $c_2 \leq 0$, or equivalently
\begin{equation*}
    \zeta_0  \geq \max\left\{0, \; \frac{\kappa+\mu}{2} Q_0 + \frac{\mu(\kappa+3\mu)(\alpha-1)}{2(1-\mu^2)\sinh P_0}\right\}.
\end{equation*}
In particular, orbits of the original system with such initial data are backwards global.
Here, $\zeta$ tends to $\infty$ exponentially as $s\to-\infty$.
Since $P$ increases, it cannot be unbounded as $s\to-\infty$, and hence we have that $x$ also tends to $\infty$ exponentially, and in particular $\rho = x^{-\frac12} \to 0$ and is integrable for $s < 0$.
This proves part (ii) of Proposition \ref{prop-constant-W-delta>0}.

\begin{remark}
    Numerical analysis suggests that for the global orbits from Proposition \ref{prop-constant-W-delta>0} (ii), $P$ also has a strictly positive limit as $s \to -\infty$, but we were not able to show this. Note however that even if $P\to0$, the variables $(z,\xi)$ still have a finite limit as $s \to -\infty$, since $\QuatRe(\xi)$ has a well-defined limit by the integrability of $\rho$, and only the polar angles $\arg(z)$ and $\arg(\QuatIm(\xi))$ blow up, while the norms $|z|\to 0$ and $|\QuatIm(\xi)| \to 0$. On the other hand, if one would like to show that the derivatives of $(z,\xi)$ also have well-defined limits, then one should show that $P$ indeed has a strictly positive limit.
\end{remark}

\section{Lifting of solutions}\label{sec-lifting}
This section is devoted to proving Theorem \ref{thm-existence-any-dim}. We will first show how to lift DYM pairs on an odd-dimensional manifold to an even-dimensional product manifold of one dimension higher. Then we will show how how to combine a DYM pair on an odd-dimensional manifold with a parallel spinor on an even dimensional manifold to yield a DYM pair on the product. 

\subsection{The even case}
Let $(M,g)$ be a $(2m - 1)$-dimensional Riemannian spin manifold, $G$ a compact Lie group with Lie algebra $\g$ and $G\to P \to M$ a principal fibre bundle. Let $E \times_\chi V$ be an associated Hermitian vector bundle for some unitary representation $\chi: G \to \U(V)$. Let $M$ be a one-dimensional connected manifold (i.e.\ either an open interval  $I = (a,b) \subset \R$, $\R$ or $\Sph^1$) endowed with the trivial spin structure and let $f \in C^\infty(B)$.
We now describe a procedure for lifting DYM pairs on $M$ to yield DYM pairs on the $2m$-dimensional warped product $B \times_f M = (B \times M,\, \diff s^2 + f^2 g)$. Denote by $\mathrm{pr}_2:B \times M \to M$ the projection onto the second factor and by 
\begin{equation*}
    \widetilde{P} := \mathrm{pr}_2^*P = \{ ((s,x),p) \in (B\times M) \times P \ | \ \pi_{P}(p) =  x \}
\end{equation*}
and 
\begin{equation*}
    \widetilde{E} := \mathrm{pr}_2^*E = \{ ((s,x),v) \in (B\times M) \times P \ | \ \pi_{E}(p) =  x \} \cong \widetilde{P}\times_\chi V
\end{equation*}
the pullbacks of $P$ and $E$, respectively, to $B \times M$. We now summarize how to relate the spinor bundle of $B\times_f M$ to that of $M$. For details see \cite{Baum2} and \cite{Baer}.

For $s \in B$ write $\Sigma^{f(s)^2} M$ for the spinor bundle of $(M,f(s)^2g)$. Then there is a canonical isometry $\Sigma M \to \Sigma^{f(s)^2} M$ , $\Psi \mapsto \widetilde\Psi$ satisfying $X\cdot \widetilde\Psi = f(s)\, \widetilde{X\cdot \Psi}$ or equivalently $\widetilde{X}\cdot\widetilde{\Psi} = \widetilde{X\cdot \Psi}$ where $\widetilde{X} = f(s)^{-1}X$ for all $X \in \Gamma(TM)$ and $\Psi \in \Gamma(\Sigma M)$. 

By slight abuse of notation we write $\Sigma M$ for $\mathrm{pr}_2^* \Sigma M$ and similarily for $\Sigma^{f(s)^2} M$. Then for each $s \in B$
\begin{equation*}
    \restr{\Sigma(B \times_f M)}{\{s\}\times M} \cong  \Sigma^{f(s)^2} M \oplus \Sigma^{f(s)^2} M \cong \Sigma M \oplus \Sigma M.
\end{equation*}
so that
\begin{equation*}
    \Sigma(B \times_f M) \cong \Sigma M \oplus \Sigma M
\end{equation*}
where the two copies of $\Sigma M$ correspond to the chiral subspaces of $\Sigma(B \times M)$. 
For $X \in \Gamma(TM)$ set $\widetilde{X} = f^{-1}\mathrm{pr}_2^*X \in \Gamma(T(B\times_f M))$. 
Clifford multiplication is then given by 
\begin{equation}\label{eq-clifford-on-prod}
    \widetilde{X} \cdot 
    \begin{bmatrix}
    \Psi^+\\
    \Psi^-
    \end{bmatrix}
    = 
    i\begin{bmatrix}
        X\cdot \Psi^-\\
        -X\cdot \Psi^+
    \end{bmatrix}, \quad 
    \nu \cdot 
    \begin{bmatrix}
    \Psi^+\\
    \Psi^-
    \end{bmatrix}
    = 
    -i\begin{bmatrix}
        \Psi^-\\
        \Psi^+
    \end{bmatrix}
\end{equation}
for $X \in \Gamma(TM)$ and $\nu$ is a vertical unit vector field (i.e. $\diff (\mathrm{pr}_2)(\nu) = 0$) which we can explicitly take to be $\nu = \frac{\partial}{\partial s}$ so that $\nu(\alpha)(s) = \alpha'(s)$ for $\alpha \in C^1(B)$.

The Hermitian metric on the spinor bundle is then simply given by
\begin{equation}\label{eq-metric-on-prod}
    \left\langle \begin{bmatrix}
        \Psi^+\\
        \Psi^-
    \end{bmatrix},
    \begin{bmatrix}
        \Phi^+\\
        \Phi^-
    \end{bmatrix}
    \right\rangle = \mathrm{pr}_2^*(\langle \Psi^+,\Phi^+\rangle + \langle \Psi^-,\Phi^-\rangle)
\end{equation}
for all $\Psi, \Phi \in \Sigma(B\times_f M)$.
The relations \eqref{eq-clifford-on-prod} and \eqref{eq-metric-on-prod} remain true for $\Sigma(B\times_f M)\otimes \widetilde E \cong \Sigma M \otimes \widetilde E \oplus \Sigma M \otimes \widetilde E$ provided we take the induced metric from the lifted metric on $\widetilde E$ and the spinor metric. A pair $\Psi^+,\Psi^- \in \Gamma(\Sigma M)$ lifts to a section $ \Psi = \Psi^+\oplus \Psi^-$ in $\Gamma(\Sigma(B\times_f M))$ via $\restr{\Psi}{\{s\}\times M} = {\Psi}^+ \oplus {\Psi}^- \in \Gamma(\Sigma M) \oplus \Gamma(\Sigma M)$.  The spinorial Levi-Civita connection $\nabla$ on $\Gamma(\Sigma(B\times_f M))$ satisfies
\begin{equation*}
    (\nabla\Psi)(\widetilde X) = 
    \frac{1}{f}\begin{bmatrix}
        (\nabla^{\Sigma B}\Psi^+)(X)\\
        (\nabla^{\Sigma B}\Psi^-)(X)
    \end{bmatrix} + \frac{f'(s)}{2f(s)} \begin{bmatrix}
        -X\cdot \Psi^+\\
        X\cdot \Psi^-
    \end{bmatrix},\quad (\nabla\Psi)(\nu) = 0.
\end{equation*}
for $\Psi^+,\Psi^- \in \Gamma(\Sigma M)$ and $X \in \Gamma(TM)$.
A connection $\omega \in \Omega^1(P,\g)$ on $P$ pulls back to a connection $\widetilde\omega = \mathrm{pr}_2^*\omega \in \Omega^1(\widetilde P,\g)$ on $\widetilde P$ whose curvature form satisfies $F_{\widetilde \omega} = \mathrm{pr}_2^* F_\omega \in \Omega^2(B\times_f M; \mathrm{pr}_2^*\Ad(P))$ (for details see \cite{Baum}). One checks that
\begin{equation}\label{eq-YM-pullback}
    \diff^*_{\widetilde \omega} F_{\widetilde \omega} = \frac{1}{f^2}\mathrm{pr}_2^*(\diff^*_{\omega} F_\omega) 
\end{equation}
holds. The connection $\nabla_{\widetilde \omega}$ on $\Gamma(\Sigma(B\times_f M) \otimes \widetilde E)$ induced by $\widetilde \omega$ and the spinorial Levi-Civita connection then satisfies
\begin{equation}\label{eq-connection-on-prod}
    (\nabla_{\widetilde\omega} \Psi)(\widetilde X)  
    = 
    \frac{1}{f}\begin{bmatrix}
        (\nabla_\omega\Psi^+)(X)\\
        (\nabla_\omega\Psi^-)(X)
    \end{bmatrix} + \frac{f'(s)}{2f(s)} \begin{bmatrix}
        -X\cdot \Psi^+\\
        X\cdot \Psi^-
    \end{bmatrix},\quad (\nabla_{\widetilde\omega}\Psi)(\nu) = 0.
\end{equation}
for $\Psi^+,\Psi^- \in \Gamma(\Sigma M \otimes E)$ and $X \in \Gamma(TM)$.

\begin{lemma}\label{lem-dirac-on-prod}
    Let $(\omega,\Psi)$ be a DYM pair on $(M,g)$. Set $\widetilde{\omega} = \mathrm{pr}_2^*\omega$. For a pair of functions $\alpha,\beta \in C^\infty(B;\C)$ set
    \begin{equation*}
        \Psi_{\alpha\beta} = 
        \begin{bmatrix}
        \alpha \Psi\\
        \beta \Psi
        \end{bmatrix} \in \Gamma(\Sigma(B\times_f M)).
    \end{equation*}
    Then the current and Dirac operator satisfy:
    \begin{align*}
        \mathfrak{J}[\Psi_{\alpha\beta}] &= 2f\, \Im(\alpha\bar{\beta})\,\mathrm{pr}_2^*(\diff_\omega^*F_\omega) + 2i\Re(\alpha\bar{\beta})\sum_{k=1}^{\dim \g} \mathrm{pr}_2^*\langle \Psi, \chi_*(\tau_k)\Psi\rangle \, \nu^{\flat} \otimes \tau_k\\
        \dirac_{\widetilde{\omega}} \Psi_{\alpha\beta} &= -\frac{(2m-1)if'}{2f}
        \begin{bmatrix}
        \beta \Psi\\
        \alpha \Psi
        \end{bmatrix} 
        - i\begin{bmatrix}
            \beta'\Psi \\
            \alpha'\Psi
        \end{bmatrix}
    \end{align*}
    where $\{\tau_k\}$ is the lift of a local orthonormal frame for $\Ad(P)$.
\end{lemma}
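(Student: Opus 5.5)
We compute both quantities directly from the structure formulas \eqref{eq-clifford-on-prod}, \eqref{eq-metric-on-prod} and \eqref{eq-connection-on-prod}, in a local orthonormal frame of $B\times_f M$ of the form $(\widetilde e_1,\dots,\widetilde e_{2m-1},\nu)$, where $(e_j)$ is a local orthonormal frame of $(M,g)$ and $\widetilde e_j = f^{-1}\mathrm{pr}_2^* e_j$. Throughout we use that $\Psi$ is pulled back from $M$, so that $\nu$ only differentiates the coefficient functions $\alpha,\beta$ on $B$.

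\emph{Dirac operator.} We write $\dirac_{\widetilde\omega}\Psi_{\alpha\beta} = \sum_j \widetilde e_j\cdot(\nabla_{\widetilde\omega}\Psi_{\alpha\beta})(\widetilde e_j) + \nu\cdot(\nabla_{\widetilde\omega}\Psi_{\alpha\beta})(\nu)$. The vertical derivative of the coefficients gives $(\nabla\Psi_{\alpha\beta})(\nu) = (\alpha'\Psi,\beta'\Psi)$; here we read \eqref{eq-connection-on-prod} as saying that $\nabla_\nu$ kills lifted spinors, not arbitrary $B$-dependent combinations. Clifford multiplication by $\nu$ then produces $-i(\beta'\Psi,\alpha'\Psi)$. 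For the horizontal part, \eqref{eq-connection-on-prod} gives two contributions.
\begin{itemize}
\item The first term is $f^{-1}(\alpha\nabla_\omega\Psi(e_j),\beta\nabla_\omega\Psi(e_j))$. After applying $\widetilde e_j\cdot$ via \eqref{eq-clifford-on-prod} and summing, it becomes $i f^{-1}(\beta\dirac_\omega\Psi,-\alpha\dirac_\omega\Psi)$, which vanishes since $\dirac_\omega\Psi=0$.
\item The second term is $\tfrac{f'}{2f}(-\alpha e_j\cdot\Psi,\beta e_j\cdot\Psi)$. Applying $\widetilde e_j\cdot$ gives $\tfrac{f'}{2f}\,i(\beta e_j e_j\Psi,\alpha e_je_j\Psi) = -\tfrac{if'}{2f}(\beta\Psi,\alpha\Psi)$ for each $j$, so the sum contributes $-\tfrac{(2m-1)if'}{2f}(\beta\Psi,\alpha\Psi)$.
\end{itemize}
Adding the horizontal and vertical parts yields the claimed formula.

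\emph{Current.} We expand $\mathfrak J[\Psi_{\alpha\beta}] = \sum_j\langle \widetilde e_j\cdot\Psi_{\alpha\beta},\chi_*(\tau_k)\Psi_{\alpha\beta}\rangle\,\widetilde e_j^\flat\otimes\tau_k + \langle\nu\cdot\Psi_{\alpha\beta},\chi_*(\tau_k)\Psi_{\alpha\beta}\rangle\,\nu^\flat\otimes\tau_k$ using the metric \eqref{eq-metric-on-prod}. Let $c_j=\langle e_j\cdot\Psi,\chi_*(\tau_k)\Psi\rangle$. For the horizontal part we get $\langle i\beta e_j\Psi,\alpha\chi_*\Psi\rangle + \langle -i\alpha e_j\Psi,\beta\chi_*\Psi\rangle$. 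With the real part of the Hermitian product, which is the convention implicit in \eqref{eq-dym-current}, this equals $\Re\big(i(\beta\bar\alpha-\alpha\bar\beta)c_j\big)$, and in fact $c_j$ is real, since $e_j\cdot$ and $\chi_*(\tau_k)$ are commuting skew maps and so their composition is self-adjoint. Hence the horizontal part is $2\Im(\alpha\bar\beta)\,c_j$. Since $\widetilde e_j^\flat = f\,\mathrm{pr}_2^*e_j^\flat$, these coefficients assemble to $2f\Im(\alpha\bar\beta)\,\mathrm{pr}_2^*\mathfrak J[\Psi]$, which equals $2f\Im(\alpha\bar\beta)\,\mathrm{pr}_2^*(\diff^*_\omega F_\omega)$ by the DYM equation on $M$. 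For the vertical part, $\nu\cdot\Psi_{\alpha\beta} = -i(\beta\Psi,\alpha\Psi)$, and pairing with $(\alpha\chi_*\Psi,\beta\chi_*\Psi)$ gives $-i$-type multiples of $(\beta\bar\alpha+\alpha\bar\beta)\langle\Psi,\chi_*(\tau_k)\Psi\rangle$. Since $\langle\Psi,\chi_*(\tau_k)\Psi\rangle$ is purely imaginary by skewness, this is $2i\Re(\alpha\bar\beta)\langle\Psi,\chi_*(\tau_k)\Psi\rangle$ up to the sign convention for the inner product.

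The main obstacle is fixing the conventions consistently: which slot of the Hermitian product is conjugate-linear, whether the current uses the real part, and the overall sign and factor of $i$ in the vertical term. We would settle these by matching against \eqref{eq-dym-current} and against the 3-dimensional computation \eqref{eq-current-ansatz}. The rest is bookkeeping.
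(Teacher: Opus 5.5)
Your proposal is correct and is essentially the paper's own proof. It uses the same orthonormal frame $(\widetilde e_1,\dots,\widetilde e_{2m-1},\nu)$ and the same application of \eqref{eq-clifford-on-prod}, \eqref{eq-metric-on-prod} and \eqref{eq-connection-on-prod}. The $\dirac_\omega\Psi$ terms vanish in the same way, and the horizontal current is reassembled into $2f\,\Im(\alpha\bar\beta)\,\mathrm{pr}_2^*\mathfrak{J}[\Psi]$ via $\widetilde e_j^\flat = f\,\mathrm{pr}_2^*e_j^\flat$, exactly as in the paper.

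The sign you leave open in the vertical term is fixed by taking $\langle\cdot,\cdot\rangle$ conjugate-linear in the first slot, as the paper does (see its later evaluation $\langle\Psi,\chi_*(\tau_3)\Psi\rangle=\frac{i}{2}(|\psi_1|^2-|\psi_2|^2)$). With that convention the vertical term is exactly $i(\beta\bar\alpha+\alpha\bar\beta)\langle\Psi,\chi_*(\tau_k)\Psi\rangle$. Matching against \eqref{eq-dym-current} or \eqref{eq-current-ansatz} would not settle the sign, since those currents are real and hence insensitive to the convention.
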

\begin{proof}
    Given a local orthonormal frame $\{e_1,\dots,e_{2m-1}\}$ of $M$ the set $\{\widetilde{e}_1,\dots,\widetilde{e}_{2m-1}, \nu\}$ is a local orthonormal frame on $B\times_f M$. Using \eqref{eq-connection-on-prod} we compute
    \begin{align*}
        \dirac_{\widetilde\omega} \Psi_{\alpha\beta} &= \sum_{i=1}^{2m-1} \widetilde{e}_i \cdot (\nabla_{\widetilde\omega}\Psi_{\alpha\beta})(\widetilde{e}_i) + \nu\cdot (\nabla_{\widetilde\omega}\Psi_{\alpha\beta})(\nu)\\
        &=\sum_{i=1}^{2m-1}\left( \frac{i}{f}
        \begin{bmatrix*}[r]
            e_i \cdot \beta(\nabla_\omega \Psi)(e_i)\\
            -e_i \cdot \alpha(\nabla_{\omega}\Psi)(e_i)
        \end{bmatrix*}
        - \frac{if'}{2f} 
        \begin{bmatrix*}
        \beta \Psi\\
        \alpha \Psi
        \end{bmatrix*}\right)
        -i
        \begin{bmatrix*}
        \beta'\Psi\\
        \alpha' \Psi
        \end{bmatrix*}\\
        &= \frac{i}{f}
        \begin{bmatrix*}[r]
            \beta\dirac_\omega \Psi\\
            -\alpha\dirac_\omega \Psi
        \end{bmatrix*}
        -
        \frac{i(2m-1)f'}{2f}
        \begin{bmatrix*}
            \beta \Psi\\
            \alpha \Psi
        \end{bmatrix*}
        - i\begin{bmatrix*}
            \beta'\Psi\\
            \alpha' \Psi
        \end{bmatrix*}\\
        &=-
        \frac{i(2m-1)f'}{2f}
        \begin{bmatrix*}
            \beta \Psi\\
            \alpha \Psi
        \end{bmatrix*}
        - i\begin{bmatrix*}
            \beta'\Psi\\
            \alpha' \Psi
        \end{bmatrix*}.
    \end{align*}
    For the current we have, with respect to this frame,
    \begin{align*}
        \mathfrak{J}[\Psi_{\alpha\beta}] &= \sum_{k= 1}^{\dim \g}\left(\langle \nu\cdot \Psi_{\alpha\beta},\chi_*(\tau_k)\Psi_{\alpha\beta} \rangle\, \nu^\flat \otimes \tau_k + \sum_{i=1}^n \langle \widetilde{e}_i\cdot \Psi_{\alpha\beta},\chi_*(\tau_k)\Psi_{\alpha\beta }\rangle \, \widetilde{e}^i\otimes \tau_k\right)\\
        &=\sum_{k= 1}^{\dim \g}\left(i(\beta\bar{\alpha} + \alpha\bar{\beta})\,\mathrm{pr}_2^*\langle  \Psi,\chi_*(\tau_k)\Psi \rangle \nu^\flat \otimes \tau_k + \sum_{i=1}^n i(\beta\bar{\alpha} -\alpha\bar{\beta})\,\mathrm{pr}_2^*(\langle e_i\cdot \Psi,\chi_*(\tau_k)\Psi\rangle) \widetilde{e}^i\otimes \tau_k\right)\\
        &=\sum_{k= 1}^{\dim \g}2i\Re(\alpha\bar{\beta})\,\mathrm{pr}_2^*\langle  \Psi,\chi_*(\tau_k)\Psi \rangle\, \nu^\flat \otimes \tau_k +  2f\,\Im(\alpha\bar{\beta})\,\mathrm{pr}_2^*\mathfrak{J}[\Psi]\\
        &= 2f\,\Im(\alpha\bar{\beta})\,\mathrm{pr}_2^*(\diff_\omega^*F_\omega) + 2i\Re(\alpha\bar{\beta})\sum_{k=1}^{\dim \g} \mathrm{pr}_2^*\langle \Psi, \chi_*(\tau_k)\Psi\rangle \, \nu^{\flat} \otimes \tau_k
    \end{align*}
    where we have used the relation \eqref{eq-clifford-on-prod} and the fact that $(\omega,\Psi)$ is a DYM pair.
\end{proof}
The following then constructs out of each DYM pair on $M$ a one-parameter family of DYM pairs on $B\times_f M$, with $f > 0$ and smooth but otherwise arbitrary if $\dim M  = 3$ and constant otherwise.
\begin{proposition}
    Let $\dim M = 2m-1$ with $m \geq 2$ and $(\omega, \Psi)$ be a DYM pair on $M$. Take $f \in C^{\infty}(M; \R_{>0})$ arbitrary if $m=2$ and $f \equiv 1$ if $m > 2$. Set $\widetilde \omega  = \mathrm{pr}_2^*\omega$ and define  $\Psi_{c} \in \Gamma(\Sigma(B\times_f M)\otimes \widetilde E)$ by
    \begin{equation*}
        \Psi_c =
        cf^{-\frac{2m-1}{2}}\begin{bmatrix*}[c]
            \Psi\\
            -\frac{i}{2|c|^2}\Psi
        \end{bmatrix*}
    \end{equation*}
    for $c \in \C \setminus \{0\}$. Then $(\widetilde \omega, \Psi_c)$ is a DYM pair on $B \times_f M$.
\end{proposition}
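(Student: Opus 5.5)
The proof is a direct application of Lemma \ref{lem-dirac-on-prod} with $\alpha = cf^{-\frac{2m-1}{2}}$ and $\beta = -\frac{i}{2\bar c}f^{-\frac{2m-1}{2}}$, noting that $\frac{c}{|c|^2} = \frac{1}{\bar c}$. Both components of $\Psi_c$ are then scalar multiples of $\Psi$, so $\Psi_c = \Psi_{\alpha\beta}$ in the notation of the lemma. I would verify the two DYM equations in turn, using \eqref{eq-YM-pullback} to compare the Yang--Mills sides.

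\emph{Dirac equation.} By Lemma \ref{lem-dirac-on-prod}, the first component of $\dirac_{\widetilde\omega}\Psi_{\alpha\beta}$ is $-i\bigl(\beta' + \frac{(2m-1)f'}{2f}\beta\bigr)\Psi$, and the second is the same expression with $\beta$ replaced by $\alpha$. Both $\alpha$ and $\beta$ have the form $\text{const}\cdot f^{-\frac{2m-1}{2}}$, and any such function $h$ satisfies
\[
h' = -\frac{(2m-1)f'}{2f}\,h .
\]
Hence both components vanish identically. This step needs no assumption on $f$ beyond positivity.

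\emph{Yang--Mills equation.} Here $\alpha\bar\beta = c f^{-(2m-1)}\cdot \frac{i}{2c} = \frac{i}{2}f^{-(2m-1)}$, which is purely imaginary. The $\nu^\flat$-term in Lemma \ref{lem-dirac-on-prod} is therefore killed by the factor $\Re(\alpha\bar\beta)=0$. Moreover $\Im(\alpha\bar\beta) = \frac12 f^{-(2m-1)}$, so the lemma gives
\[
\mathfrak J[\Psi_c] = f^{2-2m}\,\mathrm{pr}_2^*(\diff_\omega^*F_\omega).
\]
On the other hand, \eqref{eq-YM-pullback} gives $\diff^*_{\widetilde\omega}F_{\widetilde\omega} = f^{-2}\,\mathrm{pr}_2^*(\diff^*_\omega F_\omega)$. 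The two sides agree exactly when $f^{2-2m} = f^{-2}$, i.e.\ when $f^{4-2m}\equiv 1$. This holds for arbitrary $f$ when $m=2$ and for $f\equiv 1$ when $m>2$, which explains the dichotomy in the hypotheses.

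The only step I would check carefully is the bookkeeping of conventions. The factor $f$ in the current formula of the lemma comes from $\widetilde e^i = f\,\mathrm{pr}_2^* e^i$. I would also confirm \eqref{eq-YM-pullback} in the warped setting, checking that the vertical $\nu$-direction contributes nothing to $\diff^*_{\widetilde\omega}F_{\widetilde\omega}$ even when $f'\neq 0$. The $m=2$ case hinges on these two powers of $f$ cancelling. If the convention for $\widetilde e^i$ were different, the exponent in the ansatz $f^{-\frac{2m-1}{2}}$ would still make the Dirac equation hold, but the Yang--Mills comparison would need re-examination. Everything else is a direct substitution into Lemma \ref{lem-dirac-on-prod}.
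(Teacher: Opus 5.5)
Your proposal is correct and is essentially the paper's proof. Both substitute $\Psi_{\alpha\beta}$ into Lemma~\ref{lem-dirac-on-prod} and compare with \eqref{eq-YM-pullback}; the paper first derives and solves $\alpha'=-\frac{2m-1}{2}\frac{f'}{f}\alpha$, $\beta'=-\frac{2m-1}{2}\frac{f'}{f}\beta$, $\alpha\bar\beta=\frac{i}{2f^3}$, whereas you verify the given $\alpha,\beta$ directly, which suffices here. Your bookkeeping $f^{2-2m}=f^{-2}$ is right (the paper's displayed lifting condition with right-hand side $\frac{i}{2f}$ is a misprint for $\frac{i}{2f^3}$), and \eqref{eq-YM-pullback} holds for non-constant $f$, since $\star F_{\widetilde\omega}=f^{2m-5}\,\diff s\wedge\mathrm{pr}_2^*(\star_g F_\omega)$ and the derivative of $f^{2m-5}$ is killed by $\diff s$.
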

\begin{proof}
    For $\alpha, \beta \in C^\infty(B;\C)$ set 
    \begin{equation*}
        \Psi_{\alpha\beta} = 
        \begin{bmatrix}
            \alpha \Psi\\
            \beta \Psi
        \end{bmatrix}
    \end{equation*}
    Note that $f,\alpha$ and $\beta$ are either smooth periodic functions with the same period if $B = \Sph^1$ or else smooth functions on an open interval if $B = I \subset \R$.
    From Lemma \ref{lem-dirac-on-prod} and the formula  \ref{eq-YM-pullback} we have that $(\widetilde{\omega},\Psi_{\alpha\beta})$ is a DYM pair if and only if $f,\alpha$ and $\beta$ satisfy
    \begin{equation*}
        \begin{cases}
            \alpha' = -\frac{2m-1}{2}\frac{f'}{f}\alpha\\
            \beta' = -\frac{2m-1}{2}\frac{f'}{f}\beta\\
            \alpha\bar{\beta} = \frac{i}{2f^3}
        \end{cases}
    \end{equation*}
    The first pair of equations give
    \begin{equation*}
        \begin{cases}
            \alpha = c_{\alpha}f^{-\frac{2m-1}{2}}\\
            \beta = c_\beta f^{-\frac{2m-1}{2}}
        \end{cases}
    \end{equation*}
    for $c_\alpha,c_\beta \in \C$. Inserting into the third equation yields the condition
    \begin{equation}\label{eq-lifting cond}
        \frac{c_\alpha \bar{c}_\beta}{f^{2m-1}} = \frac{i}{2f}.
    \end{equation}
    If $m = 2$ this simplifies to $c_\alpha\bar{c}_\beta = \frac{i}{2}$ which is satisfied by taking $c_\alpha =: c \in \C\setminus\{0\}$ and $c_\beta = -\frac{ic}{2|c|^2}$. If $m > 2$ there are no solutions to \eqref{eq-lifting cond} unless $f$ is constant. If $f$ is assumed constant we may without loss of generality set $f \equiv 1$. Taking $c_\alpha,c_\beta$ as in the $m=2$ case then yields the desired solutions.
\end{proof}

\begin{remark}
    Since $\dim B = 1$, the warped product $B\times_f M$ is conformally equivalent to the Riemannian product $\widetilde{B}\times M$ where $\widetilde{B}$ is another (possibly different) one-dimensional Riemannian manifold. Therefore, the existence of lifts with non-constant warping function only for $\dim M = 3$ can be viewed as a consequence of the conformal invariance of the DYM system in dimension 4.
\end{remark}

\begin{remark}
    We suspect this construction to be readily adaptable to produce coupled DYM pairs on base manifolds of Lorentzian signature of the form $(B\times M, -\diff s^2 +f^2g)$ where $f \in C^\infty(B;\R_{>0})$ and $\dim M =3$.
\end{remark}

\subsection{The odd case}
Let $(M,g), P$ and $E$ be as before with $\dim M = 2m-1$ and let $(B,h)$ be a $2n$-dimensional Riemannian spin manifold. We again denote the pullbacks of $P$ and $E$ by $\widetilde{P}$ and $\widetilde{E}$, respectively. We again briefly recall how to relate the spinor bundle on the product to those on the factors. For a more detailed and thorough exposition we recommend the note \cite{Klinker}. The spinor bundle of the Riemannian product $(B\times M,h + g)$ is given by 
\begin{equation*}
    \Sigma(B \times M) = \Sigma B \otimes \Sigma M,
\end{equation*}
where these are again the pullbacks of the bundles on the factors.
For $(x,y) \in B \times M$, $X \in T_x B \subset T_{(x,y)}(B\times M)$, $Y \in T_y M \subset T_{(x,y)} (B\times M)$, $\Phi = \Phi^+ + \Phi^- \in \Sigma_{x}^+B \oplus \Sigma_{x}^-B$ and $\Psi \in \Sigma_y M$ we have
\begin{align}\label{eq-clifford-on-prod-odd}
    \begin{split}
        X \cdot (\Phi \otimes \Psi) &= (X\cdot \Phi) \otimes \Psi\\
        Y \cdot (\Phi \otimes \Psi) &= \Phi^+\otimes (Y \cdot \Psi) - \Phi^- \otimes (Y\cdot \Psi).
    \end{split}
\end{align}
for Clifford multiplication. The Hermitian metric on the spinor bundle is
\begin{equation}\label{eq-metric-on-prod-odd}
    \langle \Phi_1 \otimes \Psi_1 , \Phi_2 \otimes \Psi_2 \rangle_{(x,y)} = \langle\Phi_1,\Phi_2\rangle_x\langle\Psi_1,\Psi_2\rangle_y
\end{equation}
where $\Phi_1,\Phi_2 \in \Sigma_x B$ and $\Psi_1, \Psi_2 \in \Sigma_y M$. Both formulae remain true for $\Sigma (B\times M) \otimes \widetilde{E} \cong \Sigma B \otimes (\Sigma M \otimes \widetilde{E})$ provided that one takes the metric induced by the pullback metric on $\widetilde{E}$. The spinorial Levi-civita connection $\nabla$ on $\Sigma (B\times M)$ is the tensor product of the pullbacks of the spinorial Levi-Civita connections on $\Sigma B$ and $\Sigma M$, i.e.
\begin{equation*}
    \nabla (\Phi \otimes \Psi) = \nabla^{\Sigma B} \Phi \otimes \Psi + \Phi \otimes \nabla^{\Sigma M} \Psi
\end{equation*}
for $\Psi \in \Gamma(\mathrm{pr}_2^* \Sigma M)$ and $\Phi \in \Gamma(\mathrm{pr}_1^* \Sigma B)$. 

A connection $\omega \in \Omega^1(P,\g)$ on $P$ pulls back to a connection $\widetilde\omega = \mathrm{pr}_2^*\omega \in \Omega^1(\widetilde P,\g)$ on $\widetilde P$ whose curvature form satisfies $F_{\widetilde \omega} = \mathrm{pr}_2^* F_\omega \in \Omega^2(B\times_f M; \mathrm{pr}_2^*\Ad(P))$ and
\begin{equation}\label{eq-YM-pullback-odd}
    \diff^*_{\widetilde \omega} F_{\widetilde \omega} = \mathrm{pr}_2^*(\diff^*_{\omega} F_\omega). 
\end{equation}
The induced connection $\nabla_{\omega}$ on $\Gamma(\Sigma(B\times M) \otimes \widetilde{E})$ satisfies 
\begin{equation}\label{eq-connection-on-prod-odd}
    \nabla_{\widetilde{\omega}}(\Phi \otimes \Psi) = (\nabla^{\Sigma B}\Phi) \otimes \Psi + \Phi \otimes (\nabla_{\omega}\Psi),
\end{equation}
for $\Phi \in \Gamma(\mathrm{pr}_1^* \Sigma B)$ and $\Psi \in \Gamma(\mathrm{pr}_2^*(\Sigma M \otimes E))$. 

A pair $\Phi \in \Gamma(\Sigma B), \Psi \in \Gamma(\Sigma M \otimes E)$ lift to a section $\Phi \otimes \Psi := \mathrm{pr}_1^*\Phi \otimes \mathrm{pr}_2^*\Psi$ in $\Gamma(\Sigma(B\times M) \otimes \widetilde{E})$. 
If $(\omega,\Psi)$ is a DYM pair on $M$ the following gives necessary and sufficient condtions for when $\Psi \otimes \Phi$ is a DYM pair on $M\times B$.
\begin{proposition}\label{prop-nec&suf-odd-case}
    Let $\dim M = 2m - 1$ with $m \geq 2$ and let $(\omega,\Psi)$ be a DYM pair on $M$. Let $(B,h)$ be a Riemannian spin manifold with $\dim B = 2n$, $n \geq 1$ and let $\Phi = \Phi^+ +\Phi^- \in \Gamma(\Sigma M)$  Then $(\widetilde{\omega}, \Phi \otimes \Psi)$ and is a DYM pairs on $B\times M$, where $\widetilde{\omega} = \mathrm{pr}_2^*\omega$ if and only if $\Phi$ satisfies
    \begin{equation*}
        \begin{cases}
            \slashed{\partial}\Phi = 0\\
            |\Phi^+|^2-|\Phi^-|^2 \equiv 1
        \end{cases}
    \end{equation*}
    and at least one of the following conditions holds:
    \begin{enumerate}
        \item $\langle\Psi,\chi_*(\tau) \Psi\rangle = 0$ for all $\tau \in \Gamma(\Ad(P))$, or
    \item $\langle \Phi, Y \cdot \Phi\rangle = 0$  for all $ Y \in \Gamma(TB)$.
    \end{enumerate}
\end{proposition}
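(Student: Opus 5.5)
The plan is to compute the Dirac operator and the current of $\Phi\otimes\Psi$ directly, using the product formulas \eqref{eq-clifford-on-prod-odd}, \eqref{eq-metric-on-prod-odd} and \eqref{eq-connection-on-prod-odd}, and then to compare with \eqref{eq-YM-pullback-odd}. (Here $\Phi$ is of course a section of $\Sigma B$.)

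\textbf{Dirac operator.} Take a local orthonormal frame $\{f_a\}$ of $B$ and $\{e_i\}$ of $M$, so that together they give a frame of $B\times M$. By \eqref{eq-connection-on-prod-odd} and \eqref{eq-clifford-on-prod-odd},
\begin{equation*}
    \dirac_{\widetilde\omega}(\Phi\otimes\Psi) = (\slashed{\partial}\Phi)\otimes\Psi + \Phi^+\otimes\dirac_\omega\Psi - \Phi^-\otimes\dirac_\omega\Psi = (\slashed{\partial}\Phi)\otimes\Psi,
\end{equation*}
because $\dirac_\omega\Psi=0$. This vanishes if and only if $\slashed{\partial}\Phi=0$. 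For this we need $\Psi\not\equiv 0$, which holds since the pair is coupled. Here one uses that $(\slashed\partial\Phi)\otimes\Psi=0$ with $\Psi\neq0$ at some point forces $\slashed\partial\Phi=0$ there; on the open set where $\Psi\neq0$ this gives $\slashed\partial\Phi=0$ on all of $B$, since that equation only involves $B$.

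\textbf{Current.} The current splits into horizontal and vertical parts. For $Y=e_i$, use \eqref{eq-clifford-on-prod-odd} and \eqref{eq-metric-on-prod-odd}, together with the orthogonality of $\Sigma^\pm B$:
\begin{equation*}
    \langle e_i\cdot(\Phi\otimes\Psi),\chi_*(\tau_k)(\Phi\otimes\Psi)\rangle = (|\Phi^+|^2-|\Phi^-|^2)\,\langle e_i\cdot\Psi,\chi_*(\tau_k)\Psi\rangle.
\end{equation*}
So the $M$-components equal $(|\Phi^+|^2-|\Phi^-|^2)\,\mathrm{pr}_2^*\mathfrak{J}[\Psi] = (|\Phi^+|^2-|\Phi^-|^2)\,\mathrm{pr}_2^*(\diff_\omega^*F_\omega)$. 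For $X=f_a$ the components are $\langle f_a\cdot\Phi,\Phi\rangle\,\langle\Psi,\chi_*(\tau_k)\Psi\rangle$.

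By \eqref{eq-YM-pullback-odd}, $\diff^*_{\widetilde\omega}F_{\widetilde\omega}$ has only $M$-components, and these equal $\mathrm{pr}_2^*\diff^*_\omega F_\omega$. The Yang--Mills equation therefore splits into two conditions:
\begin{enumerate}
    \item[(a)] $(|\Phi^+|^2-|\Phi^-|^2-1)\,\diff^*_\omega F_\omega=0$;
    \item[(b)] $\langle f_a\cdot\Phi,\Phi\rangle\,\langle\Psi,\chi_*(\tau_k)\Psi\rangle=0$ for all $a,k$.
\end{enumerate}
The product in (b) is a product of a function on $B$ and a function on $M$. It vanishes identically if and only if one of the two factors vanishes identically, which gives exactly the dichotomy (i)/(ii). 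Since the pair is coupled, $\diff^*_\omega F_\omega=\mathfrak{J}[\Psi]\not\equiv0$, and the same separation-of-variables argument shows that (a) is equivalent to $|\Phi^+|^2-|\Phi^-|^2\equiv1$.

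\textbf{Main obstacle.} The delicate step is the bookkeeping in the current computation: getting the signs from \eqref{eq-clifford-on-prod-odd}, and using that Clifford multiplication by $f_a$ swaps chirality while the $\Sigma^\pm B$ summands are orthogonal. This is what kills the cross terms and produces exactly $|\Phi^+|^2-|\Phi^-|^2$. A second point needing care is that the characterization genuinely uses $\mathfrak J[\Psi]\not\equiv0$. In the uncoupled case the condition on $|\Phi^\pm|$ would not be necessary, so the statement should be read for coupled pairs, as in the intended application.
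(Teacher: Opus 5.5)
Your proof is correct and follows the same route as the paper. You compute $\dirac_{\widetilde\omega}(\Phi\otimes\Psi)=(\slashed{\partial}\Phi)\otimes\Psi$, split the current into $(|\Phi^+|^2-|\Phi^-|^2)\,\mathrm{pr}_2^*\mathfrak{J}[\Psi]$ plus the $B$-components $\langle f_a\cdot\Phi,\Phi\rangle\langle\Psi,\chi_*(\tau_k)\Psi\rangle$, and compare with \eqref{eq-YM-pullback-odd}. Your separation-of-variables argument for the ``only if'' direction fills in what the paper only asserts. You are also right that necessity requires $\Psi\not\equiv0$ and $\mathfrak{J}[\Psi]\not\equiv0$, since for uncoupled pairs the normalization $|\Phi^+|^2-|\Phi^-|^2\equiv1$ is not forced, so the statement should be read for coupled pairs.
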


\begin{proof}
    A simple calculation using the formulae \eqref{eq-clifford-on-prod-odd} and \eqref{eq-metric-on-prod-odd} yields
    \begin{equation*}
        \mathfrak{J}[\Phi\otimes\Psi] = (|\Phi^+|^2-|\Phi^-|^2)\mathrm{pr}_2^*\mathfrak{J}[\Psi] + \sum_{j=1}^{2n}\sum_{k=1}^{\dim \g} \mathrm{pr}_1^*\langle\Phi,\nu_i\cdot\Phi\rangle\mathrm{pr}_2^*\langle\Psi,\chi_*(\tau_k)\Psi\rangle \nu^i\otimes \tau_k
    \end{equation*}
    from which the formula for the current follows as by assumption $\mathfrak{J}[\Psi] = \diff_\omega^*F_\omega$ and equation \eqref{eq-YM-pullback-odd}. For the Dirac operator one calculates, using relations \eqref{eq-clifford-on-prod-odd} and \eqref{eq-connection-on-prod-odd},
    \begin{equation*}
        \dirac_{\widetilde{\omega}} (\Phi \otimes \Psi) = (\slashed{\partial}\Phi)\otimes \Psi + (\Phi^+-\Phi^-)\otimes \dirac_\omega \Psi = (\slashed{\partial}\Phi)\otimes \Psi
    \end{equation*}
    since $\dirac_\omega \Psi = 0$. A sufficient condition is then clearly that $\Phi$ satisfies
    \begin{equation*}
        \begin{cases}
            \slashed{\partial}\Phi = 0\\
            |\Phi^+|^2-|\Phi^-|^2 \equiv 1\\
            \langle \Phi, Y \cdot \Phi\rangle = 0 \quad \text{for all } Y \in \Gamma(TB).
        \end{cases}
    \end{equation*}
    If, for all $k = 1, \dots, \dim \g$, $\langle\Psi,\chi_*(\tau_k)\Psi\rangle = 0$, then the $\nu^i$-components of the current automatically vanish so we may drop the otherwise necessary final condition above and only need $\Phi$ to satisfy
    \begin{equation*}
        \begin{cases}
            \slashed{\partial}\Phi = 0\\
            |\Phi^+|^2-|\Phi^-|^2 \equiv 1.
        \end{cases}
    \end{equation*}
\end{proof}
\begin{corollary}
    Let $M,B$ and $(\omega, \Psi)$ be as in \ref{prop-nec&suf-odd-case} and suppose in addition $B$ admits a parallel spinor $\Phi^+ \in \Gamma(\Sigma^+B)$ of positive chirality. Then $(\Psi, \frac{\Phi^+}{|\Phi^+|})$ is a DYM pair on $(M\times B, g\oplus h)$
\end{corollary}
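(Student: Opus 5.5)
The plan is to apply Proposition \ref{prop-nec&suf-odd-case} with $\Phi = \Phi^+/|\Phi^+|$. This reduces the corollary to three facts about $\Phi$: it lies in the kernel of $\slashed{\partial}$, it has constant chirality norm difference $|\Phi^+|^2-|\Phi^-|^2 \equiv 1$, and it satisfies condition (ii), i.e.\ $\langle \Phi, Y\cdot\Phi\rangle = 0$ for all $Y \in \Gamma(TB)$. (The statement writes the pair with its factors swapped and lives on $M\times B$. I will read it as the pair $(\widetilde\omega, \Phi\otimes\Psi)$ on the Riemannian product $B\times M$, which is isometric to $(M\times B, g\oplus h)$.)

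First, parallel spinors have constant pointwise norm, since $X|\Phi^+|^2 = 2\Re\langle\nabla_X\Phi^+,\Phi^+\rangle = 0$. On connected $B$ with $\Phi^+\not\equiv 0$, $|\Phi^+|$ is therefore a positive constant. Hence $\Phi = \Phi^+/|\Phi^+|$ is again parallel, of positive chirality, with $|\Phi| \equiv 1$ and $\Phi^- = 0$. This gives the norm condition $|\Phi^+|^2-|\Phi^-|^2 \equiv 1$ directly. Parallelism also gives $\slashed{\partial}\Phi = e_i\cdot\nabla_{e_i}\Phi = 0$.

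The remaining point, and the only one needing a short argument, is condition (ii). Since $\dim B = 2n$ is even, Clifford multiplication by a tangent vector $Y$ interchanges $\Sigma^+B$ and $\Sigma^-B$. Thus $Y\cdot\Phi \in \Sigma^-B$ while $\Phi\in\Sigma^+B$. The chiral subbundles are orthogonal with respect to the Hermitian metric, because the volume element acts by $\pm 1$ on them and is self-adjoint (or unitary). Hence $\langle\Phi, Y\cdot\Phi\rangle = 0$ pointwise.

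With all hypotheses of Proposition \ref{prop-nec&suf-odd-case} verified, that proposition yields that $(\widetilde\omega, \Phi\otimes\Psi)$ is a DYM pair. The main obstacle is really just bookkeeping: using connectedness so that the normalisation is globally smooth, and recording that coupling is preserved. For the latter, the current equals $\mathrm{pr}_2^*\mathfrak{J}[\Psi]$, which is nonzero whenever $\mathfrak{J}[\Psi]\neq0$.
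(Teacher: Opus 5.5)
Your proposal is correct and is exactly the argument the paper leaves implicit: the corollary is stated without proof as a direct application of the preceding proposition, and you check its hypotheses in the intended way (a parallel spinor on connected $B$ has constant norm and is harmonic, so the normalised spinor $\Phi$ is harmonic, of pure positive chirality and of unit norm, while condition (ii) holds because $Y\cdot\Phi\in\Sigma^-B$ is orthogonal to $\Phi\in\Sigma^+B$). Your reading of the loosely written pair as $(\widetilde\omega,\Phi\otimes\Psi)$ with $\widetilde\omega=\mathrm{pr}_2^*\omega$ is the right one, and your closing observation is also correct: the resulting current is $\mathrm{pr}_2^*\mathfrak{J}[\Psi]$, so the lift remains coupled.
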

A straigthforward family of spin manifolds $B$ possesing parallel spinors is given by the even dimensional flat torii $\mathbb{T}^{2n}$ equiped with the trivial spin structure. In this case $\Sigma B \cong \mathbb{T}^{2n}\times \Sigma^{2n}$ and parallel spinors are simply constant sections. If $B$ is simply connected and irreducible, the condition of possesing a parallel spinor is equivalent to conditions on the Holonomy on $B$ and a complete classification can be found in \cite{Wang}. 
We note that for the spherically symmetric setting with $\dim M = 3$ that was considered throughout the previous stages of the paper, the spherically symmetric spinor fields of the form \eqref{eq-spinor-ansatz} satisfy
\begin{equation*}
    \langle \Psi, \chi_*(\tau_1) \Psi \rangle = \langle \Psi, \chi_*(\tau_2) \Psi \rangle = 0,
    \qquad
    \langle \Psi, \chi_*(\tau_3) \Psi \rangle = \frac{i}{2}(|\psi_1|^2 - |\psi_2|^2) = \frac{i\delta_0}{2r},
\end{equation*}
where $\delta_0$ is as in Lemma \ref{lem-polar-form}, 
as can easily be verified using Table \ref{tab:invariant-actions}.
In particular, the DYM pairs with $\delta_0 = 0$ that were constructed in Proposition \ref{prop-constant-rho-delta=0} and Proposition \ref{prop-constant-W-delta=0} satisfy condition (i) of Proposition \ref{prop-nec&suf-odd-case} above.

\bibliography{refs}

@BOOK{KnappLieGroups,
  title     = "Lie groups {B}eyond an {I}ntroduction",
  author    = "Knapp, Anthony W",
  publisher = "Birkh{\"a}user",
  series    = "Progress in Mathematics",
  year      =  2013
}

@article{Kunzle,
title = {Spherically symmetric {E}instein–{Y}ang–{M}ills–{H}iggs fields for general compact gauge groups},
journal = {Nonlinear Analysis: Theory, Methods and Applications},
volume = {63},
number = {5},
pages = {473-480},
year = {2005},
doi = {https://doi.org/10.1016/j.na.2005.02.086},
author = {H.P. Künzle and Todd A. Oliynyk},
}

@article{Brodbeck,
title = {On symmetric gauge fields for arbitrary gauge and symmetry
groups},
author = {O. Brodbeck},
journal = {Helv. Phys. Acta},
year = {1996},
pages = {321-324}
}

@article {Li,
    AUTHOR = {Li, Wei},
     TITLE = {Removable singularities for solutions of coupled
              {Y}ang-{M}ills-{D}irac equations},
   JOURNAL = {J. Math. Phys.},
  FJOURNAL = {Journal of Mathematical Physics},
    VOLUME = {47},
      YEAR = {2006},
    NUMBER = {10},
     PAGES = {103502, 13},
MRREVIEWER = {Thomas\ H.\ Otway},
       DOI = {10.1063/1.2354330},
       URL = {https://doi.org/10.1063/1.2354330},
}

@article {Otway,
    AUTHOR = {Otway, Thomas H.},
     TITLE = {Removable singularities in coupled {Y}ang-{M}ills-{D}irac
              fields},
   JOURNAL = {Comm. Partial Differential Equations},
  FJOURNAL = {Communications in Partial Differential Equations},
    VOLUME = {12},
      YEAR = {1987},
    NUMBER = {9},
     PAGES = {1029--1070},
       DOI = {10.1080/03605308708820517},
       URL = {https://doi.org/10.1080/03605308708820517},
}

@article {Isobe,
    AUTHOR = {Isobe, Takeshi},
     TITLE = {Regularity and energy quantization for the
              {Y}ang-{M}ills-{D}irac equations on 4-manifolds},
   JOURNAL = {Differential Geom. Appl.},
  FJOURNAL = {Differential Geometry and its Applications},
    VOLUME = {28},
      YEAR = {2010},
    NUMBER = {4},
     PAGES = {359--375},
       DOI = {10.1016/j.difgeo.2010.05.001},
       URL = {https://doi.org/10.1016/j.difgeo.2010.05.001},
}

@article {Parker,
    AUTHOR = {Parker, Thomas H.},
     TITLE = {Gauge theories on four-dimensional {R}iemannian manifolds},
   JOURNAL = {Comm. Math. Phys.},
  FJOURNAL = {Communications in Mathematical Physics},
    VOLUME = {85},
      YEAR = {1982},
    NUMBER = {4},
     PAGES = {563--602},
      ISSN = {0010-3616,1432-0916},
       URL = {http://projecteuclid.org/euclid.cmp/1103921548},
}

@misc {A,
    title={Uncoupled {D}irac-{Y}ang-{M}ills {P}airs on {C}losed {R}iemannian {S}pin {M}anifolds},
    note = {arXiv:2601.22886},
    author={Adam Lindström},
    year={2026},
    eprint={2601.22886},
    archivePrefix={arXiv},
    primaryClass={math.DG},
    URL={https://arxiv.org/abs/2601.22886}, 
}

@article {Wang,
    AUTHOR = {Wang, McKenzie Y.},
     TITLE = {Parallel spinors and parallel forms},
   JOURNAL = {Ann. Global Anal. Geom.},
  FJOURNAL = {Annals of Global Analysis and Geometry},
    VOLUME = {7},
      YEAR = {1989},
    NUMBER = {1},
     PAGES = {59--68},
      ISSN = {0232-704X},
   MRCLASS = {53C25 (53A50)},
  MRNUMBER = {1029845},
MRREVIEWER = {S.\ M.\ Salamon},
       DOI = {10.1007/BF00137402},
       URL = {https://doi.org/10.1007/BF00137402},
}

@misc{Klinker,
      title={The spinor bundle of {R}iemannian products}, 
      note = {arXiv:math/0212058},
      author={Frank Klinker},
      year={2003},
      eprint={math/0212058},
      archivePrefix={arXiv},
      primaryClass={math.DG},
      url={https://arxiv.org/abs/math/0212058}, 
}

@book {Hamilton,
    AUTHOR = {Hamilton, Mark J. D.},
     TITLE = {Mathematical gauge theory},
    SERIES = {Universitext},
      NOTE = {With applications to the standard model of particle physics},
 PUBLISHER = {Springer, Cham},
      YEAR = {2017},
     PAGES = {xviii+657},
       DOI = {10.1007/978-3-319-68439-0},
       URL = {https://doi.org/10.1007/978-3-319-68439-0},
}

@article {Jost,
    AUTHOR = {Jost, J\"{u}rgen and Ke{\ss}ler, Enno and Wu, Ruijun and Zhu,
              Miaomiao},
     TITLE = {Geometric analysis of the {Y}ang-{M}ills-{H}iggs-{D}irac
              model},
   JOURNAL = {J. Geom. Phys.},
  FJOURNAL = {Journal of Geometry and Physics},
    VOLUME = {182},
      YEAR = {2022},
     PAGES = {104669, 24},
       DOI = {10.1016/j.geomphys.2022.104669},
       URL = {https://doi.org/10.1016/j.geomphys.2022.104669},
}

@article{Harnad,
    author = {Harnad, J. and Shnider, S. and Vinet, Luc},
    title = {Group actions on principal bundles and invariance conditions for gauge fields},
    journal = {Journal of Mathematical Physics},
    volume = {21},
    number = {12},
    pages = {2719-2724},
    year = {1980},
    doi = {10.1063/1.524389},
    url = {https://doi.org/10.1063/1.524389}
}

@article {Donaldson,
    AUTHOR = {Donaldson, S. K.},
     TITLE = {Connections, cohomology and the intersection forms of
              {$4$}-manifolds},
   JOURNAL = {J. Differential Geom.},
  FJOURNAL = {Journal of Differential Geometry},
    VOLUME = {24},
      YEAR = {1986},
    NUMBER = {3},
     PAGES = {275--341},
      ISSN = {0022-040X,1945-743X},
   MRCLASS = {57R55 (32C10 32G05 32J15 53C05 58G10)},
  MRNUMBER = {868974},
MRREVIEWER = {Ronald\ J.\ Stern},
       URL = {http://projecteuclid.org/euclid.jdg/1214440551},
}

@book {DoKr,
    AUTHOR = {Donaldson, S. K. and Kronheimer, P. B.},
     TITLE = {The geometry of four-manifolds},
    SERIES = {Oxford Mathematical Monographs},
      NOTE = {Oxford Science Publications},
 PUBLISHER = {The Clarendon Press, Oxford University Press, New York},
      YEAR = {1990},
     PAGES = {x+440},
      ISBN = {0-19-853553-8},
   MRCLASS = {57R57 (57N13 57R55 58D27 58G05)},
  MRNUMBER = {1079726},
MRREVIEWER = {Ronald\ J.\ Stern},
}

@article{GodCor,
    AUTHOR = {Corrigan, Edward F. and Goddard, Peter},
    TITLE = {Construction of Instanton and Monopole Solutions and Reciprocity},
    JOURNAL = {Annals of Physics},
    VOLUME = {154},
    YEAR = {1984},
    NUMBER ={1},
    PAGES = {253-279},
    ISSN = {0003-4916},
    URL = {https://doi.org/10.1016/0003-4916(84)90145-3},
}

@article {ADHM,
    AUTHOR = {Atiyah, M. F. and Hitchin, N. J. and Drinfeld, V. G. and Manin, Yu. I.},
     TITLE = {Construction of instantons},
   JOURNAL = {Phys. Lett. A},
  FJOURNAL = {Physics Letters. A},
    VOLUME = {65},
      YEAR = {1978},
    NUMBER = {3},
     PAGES = {185--187},
      ISSN = {0375-9601,1873-2429},
   MRCLASS = {81E10 (14F05 32L05 53C05 57R25)},
  MRNUMBER = {598562},
MRREVIEWER = {P.\ E.\ Newstead},
       DOI = {10.1016/0375-9601(78)90141-X},
       URL = {https://doi.org/10.1016/0375-9601(78)90141-X},
}

@article {AtSi,
    AUTHOR = {Atiyah, M. F. and Singer, I. M.},
     TITLE = {The index of elliptic operators. {III}},
   JOURNAL = {Ann. of Math. (2)},
  FJOURNAL = {Annals of Mathematics. Second Series},
    VOLUME = {87},
      YEAR = {1968},
     PAGES = {546--604},
      ISSN = {0003-486X},
   MRCLASS = {57.50},
  MRNUMBER = {236952},
MRREVIEWER = {F.\ Hirzebruch},
       DOI = {10.2307/1970717},
       URL = {https://doi.org/10.2307/1970717},
}

@book {Baum,
    AUTHOR = {Baum, Helga},
     TITLE = {Eichfeldtheorie},
  SUBTITLE = {Eine {E}inführung in die {D}ifferentialgeometrie auf {F}aserbündeln},
    SERIES = {Masterclass},
      NOTE = {},
 PUBLISHER = {Springer Spektrum Berlin, Heidelberg},
      YEAR = {2014},
     PAGES = {XIV + 380},
      ISBN = {978-3-642-38539-1},
   MRCLASS = {},
  MRNUMBER = {},
MRREVIEWER = {},
}

@article {Baum2,
    AUTHOR = {Baum, Helga},
     TITLE = {Complete {R}iemannian manifolds with imaginary {K}illing
              spinors},
   JOURNAL = {Ann. Global Anal. Geom.},
  FJOURNAL = {Annals of Global Analysis and Geometry},
    VOLUME = {7},
      YEAR = {1989},
    NUMBER = {3},
     PAGES = {205--226},
      ISSN = {0232-704X},
   MRCLASS = {58G25 (53C21 58G30)},
  MRNUMBER = {1039119},
MRREVIEWER = {Oussama\ Hijazi},
       DOI = {10.1007/BF00128299},
       URL = {https://doi.org/10.1007/BF00128299},
}

@article {Baer,
    AUTHOR = {B\"ar, Christian},
     TITLE = {Extrinsic bounds for eigenvalues of the {D}irac operator},
   JOURNAL = {Ann. Global Anal. Geom.},
  FJOURNAL = {Annals of Global Analysis and Geometry},
    VOLUME = {16},
      YEAR = {1998},
    NUMBER = {6},
     PAGES = {573--596},
      ISSN = {0232-704X,1572-9060},
   MRCLASS = {58G25},
  MRNUMBER = {1651379},
MRREVIEWER = {Jean-Louis\ Milhorat},
       DOI = {10.1023/A:1006550532236},
       URL = {https://doi.org/10.1023/A:1006550532236},
}
\bibliographystyle{amsplain}

\end{document}